\numberwithin{equation}{section}    
\theoremstyle{plain}
\newtheorem{thm}{Theorem}[section]
\newtheorem{lem}[thm]{Lemma}
\newtheorem{prop}[thm]{Proposition}
\theoremstyle{definition}
\newtheorem{exmp}[thm]{Example}
\theoremstyle{remark}
\newtheorem{rem}[thm]{Remark}
\newtheorem*{rem*}{Remark}
\newtheorem*{ack}{Acknowledgments}
\newcommand{\bs}{\boldsymbol}
\newcommand{\be}{\begin{equation}}    
\newcommand{\ee}{\end{equation}}    
\newcommand{\beu}{\begin{equation*}}    
\newcommand{\eeu}{\end{equation*}}    
\newcommand{\bea}{\begin{eqnarray}}    
\newcommand{\eea}{\end{eqnarray}}    
\newcommand{\beaa}{\begin{eqnarray*}}    
\newcommand{\eeaa}{\end{eqnarray*}}    
\newcommand{\bmx}{\begin{pmatrix}}    
\newcommand{\emx}{\end{pmatrix}}
\newcommand{\del}{\partial}    
\newcommand{\g}{{\mathfrak g}}    
\newcommand{\h}{{\mathfrak h}}
\newcommand{\p}{{\mathfrak p}}
\newcommand{\mf}{\mathfrak}
\newcommand{\mc}{\mathcal}
\newcommand{\half}{\frac{1}{2}}
\newcommand{\nn}{\nonumber}
\newcommand{\8}{{\infty}}
\newcommand{\eps}{\epsilon}    
\newcommand{\vareps}{\varepsilon}
\newcommand{\Z}{{\mathbb Z}}
\newcommand{\C}{{\mathbb C}}    
\newcommand{\PP}{\mathbb{P}}
\newcommand{\id}{{\mathrm{id}}}
\newcommand{\goi}[2]{=}    
\newcommand{\Hom}{\mathrm{Hom}}
\newcommand{\on}{}    
\newcommand{\Cx}{\mathbb C^*}
\newcommand{\btp}{\begin{tikzpicture}[baseline=0pt,scale=0.9,line width=0.25pt]}    
\newcommand{\etp}{\end{tikzpicture}}
\newcommand{\pr}[1]{{\left| #1 \right|}}
\newcommand{\atp}[1]{}
\newcommand{\path}{\longrightarrow}
\newcommand{\gl}{\mf{gl}}
\renewcommand{\sl}{\mf{sl}}
\DeclareMathOperator{\End}{End}
\DeclareMathOperator{\str}{sTr}
\DeclareMathOperator{\ind}{Ind}
\newcommand{\HH}{\mathcal H}
\begin{document}

\title{Gaudin models for $\mf{gl}(m|n)$}

\author{Evgeny Mukhin} 
\email{mukhin@math.iupui.edu}
\address{Department of Mathematical Sciences, 402
  N. Blackford St, LD 270, IUPUI, Indianapolis, IN 46202, USA.  }

\author{Beno\^{\i}t Vicedo}
\email{benoit.vicedo@gmail.com}
\author{Charles Young}
\email{charlesyoung@cantab.net}
\address{School of Physics, Astronomy and Mathematics, University of Hertfordshire, College Lane, Hatfield AL10 9AB, UK.}  

\begin{abstract}
We establish the basics of the Bethe ansatz for the Gaudin model associated to the Lie superalgebra $\gl(m|n)$.
In particular, we prove the completeness of the Bethe ansatz in the case of tensor products of fundamental representations.
\end{abstract}

\maketitle
\setcounter{tocdepth}{1}
\tableofcontents

\section{Introduction}
The Gaudin model \cite{G,Gbook} is a famous quantum integrable model extensively studied in the last quarter of a century by many mathematicians using various methods. In recent years, the theory of the Bethe ansatz in relation to the Gaudin model has enjoyed a new wave of popularity within the mathematical community; see for example \cite{Fre}, \cite{Freview}, \cite{Rybnikov}, \cite{Talalaev}, \cite {FFRb}, \cite{FFTL}, \cite{MVcrit}, \cite{MTV2}. 

In particular, several important results have been obtained in type $A$. These include the proof of the Shapiro-Shapiro conjecture related to Grassmannians, the proof of the simplicity of the spectrum of the model, and the Langlands-type identification of Bethe vectors and Fuchsian differential operators without monodromy; see \cite{MTV2}. However, attempts to obtain similar results in other Lie types or in XXX models have not been fully successful so far.  

In this paper we start the study 
of another model, namely the generalization of the Gaudin model of type $A$ to the 
case of Lie superalgebras $\gl(m|n)$. In some respects this model seems very similar to the even case of $\gl(n)$, while in others it is very different. 

Supersymmetric versions of the models of mathematical physics have been intensively studied in the literature. It is quite surprising that the Gaudin model associated to $\gl(m|n)$ has been largely omitted from these studies so far. Therefore, we are forced to start at the very beginning.

In this paper we define the weight function (or the wave function), which is similar to the even case of \cite{SV}. We introduce the Bethe ansatz equations (BAE) and show that when the parameters of the weight function satisfy the BAE, then its value is a singular vector which is an eigenvector of the quadratic Hamiltonians with an explicit eigenvalue. In the even case such a result is normally proved either from the corresponding result in the XXX model (where the algebraic Bethe ansatz method is used \cite{MTV1}), by methods of conformal field theory \cite{FFR}, or by studying of asymptotics of hypergeometric solutions of the Knizhnik-Zamolodchikov equation \cite{RV}. In the existing literature, the algebraic Bethe ansatz method has been  studied for supersymmetric XXX-type models \cite{BelliardRagoucy}. Also, a description of the Feigin-Frenkel centre of the vacuum Verma module at critical level is available in type $\gl(m|n)$ \cite{MolevRagoucy}. Here, however, we prefer a direct approach. 
Indeed, we find 
that the computation is fairly explicit and direct for the case of tensor products of vector representations, once one uses induction on the number of auxiliary variables (in contrast to \cite{BabujianFlume} where induction on the rank of the algebra was suggested). See Theorem \ref{Hthm}. The general case is then obtained by analytic arguments in Theorem \ref{ee2}.

We then proceed to study the case of tensor products of vector representations. We solve the case of two points and proceed to use functorial properties of the construction to show that the eigenvectors given by the Bethe ansatz form a basis in the space of singular vectors for generic values of the parameters. See Theorem \ref{compthm}. Our methods are similar to those of \cite{MV1}, \cite{MV2}.

In sharp contrast to the even case, the BAE depend on the choice of root system in $\gl(m|n)$. Therefore, one has an interplay between solutions of seemingly different sets of equations. This interplay can be expected to play an important role in the description of the supersymmetric populations of Bethe ansatz solutions -- cf. \cite{MVcrit} for the even case -- and we intend to clarify this point in further publications.

The paper is constructed as follows. In Section \ref{back sec} we give basic definitions and conventions. In Section \ref{Ham sec} we define the supersymmetric Gaudin model and establish some basic properties. In Section \ref{BA sec} we describe the Bethe ansatz method and solve the 2 point case. In Section \ref{func sec} we use the results of the previous section to establish the completeness of the Bethe ansatz method for tensor products of vector representations. In Section \ref{other sec} we discuss explicitly the simplest case of $\gl(1|1)$ and then give examples in the case of $\gl(2|1)$.

\begin{ack} EM would like to thank the University of Hertfordshire for hospitality during his visit, during which the majority of this work was accomplished.
\end{ack}

\section{Background}\label{back sec}
In this section we recall some standard facts concerning the Lie superalgebra $\gl(m|n)$ and its representation theory. For more details the reader is referred to, for example, \cite{ChengWang} and \cite{Dictionary}.
\subsection{Conventions on superspaces and superalgebras}
We work over $\C$. A \emph{vector superspace} $V=V_{\bar 0}\oplus V_{\bar 1}$ is a vector space with a $\Z_2$-gradation. Elements of $V_{\bar 0}$ are called \emph{even}; elements of $V_{\bar 1}$ are called \emph{odd}. We write $\pr v\in \{0,1\}$ for the parity of a homogeneous element $v\in V$. The direct sum $V\oplus W$ and the tensor product $V\otimes W$ of two vector superspaces $V$ and $W$ are vector superspaces with 
$(V\oplus W)_{\bar 0} := V_{\bar 0} \oplus W_{\bar 0}$, $(V\oplus W)_{\bar 1} := V_{\bar 1} \oplus W_{\bar 1}$,
$(V\otimes W)_{\bar 0} := V_{\bar 0} \otimes W_{\bar 0} \oplus V_{\bar 1} \otimes W_{\bar 1}$ and $(V\otimes W)_{\bar 1} := V_{\bar 0} \otimes W_{\bar 1} \oplus V_{\bar 1} \otimes W_{\bar 0}$.

Linear maps between superspaces that preserve parity are called \emph{even}; those that flip parity are called \emph{odd}.
Then, for any finite-dimensional superspaces $V$ and $W$, the set of all linear maps $\Hom(V,W)$ is a superspace.

We use the usual super convention: if $a: V_1 \to V_2$ and $b: W_1\to W_2$ are odd or even linear maps between vector superspaces, then we define the map $(a\otimes b) : V_1\otimes W_1 \to V_2\otimes W_2$ by
\be (a\otimes b)(v\otimes w) = (av\otimes bw) (-1)^{\pr b \pr v},\label{grotimes}\ee
with $v\in V_1$ and $w\in W_1$.  Here and throughout, when we write for example $\pr v$ we always implicitly assume $v$ is homogeneous and extend the formula in question by linearity.

An \emph{(associative) superalgebra} $A$ is a vector superspace equipped with an even associative product, that is $A_{\bar i}A_{\bar j}\subset A_{\overline{i+j}}$. The tensor product of superalgebras $A$ and $B$ is the superspace $A\otimes B$ with the product
\be
(a_1\otimes b_1)(a_2\otimes b_2)=(-1)^{\pr {b_1}\pr {a_2}} a_1a_2\otimes b_1b_2.
\nn\ee

An even (respectively odd) homomorphism of superalgebras is an algebra homomorphism which is an even (respectively odd) linear map. For any superspace $V$, the space of linear endomorphisms $\gl(V)$ is a superalgebra. A 
\emph{module} over (or equivalently a \emph{representation} of) an associative superalgebra $A$ is a superspace $V$ with an even homomorphism of superalgebras $A\to\gl(V)$.

A \emph{Lie superalgebra} $\mf a$ is a superspace equipped with a supercommutator, namely a bilinear product $[\cdot,\cdot]:\mf a\times \mf a \to \mf a$ that is super skew-symmetric and obeys the super Jacobi identity: 
\be  [X,Y]= -(-1)^{\pr X \pr Y}[Y,X] \quad\text{and}\quad [X,[Y,Z]] = [[X,Y],Z] + (-1)^{\pr X \pr Y} [Y,[X,Z]]\nn\ee for $X,Y,Z\in \mf a$.
The \emph{universal enveloping algebra} $U(\mf a)$ is the unital superalgebra obtained by taking the quotient of the tensor algebra $T(\mf a)$ by the two-sided ideal generated by
\be X \otimes Y -  (-1)^{\pr X\pr Y} Y\otimes X  - \left[X , Y \right] , \qquad X,Y\in \mf a.\nn\ee
The coproduct $\Delta: U(\mf a) \to U(\mf a) \otimes U(\mf a)$ is defined to be the even homomorphism of superalgebras such that $\Delta X = X \otimes 1 + 1\otimes X$, $X\in \mf a$.

\subsection{Definitions of $\C^{m|n}$ and $\gl(m|n)$}
Let $\C^{m|n}$ denote the complex vector superspace with $\dim(\C^{m|n}_{\bar 0})=m$ and $\dim(\C^{m|n}_{\bar 1})=n$. We choose a basis $e_a$, $1\leq a \leq m+n$, consisting of homogeneous vectors, $m$ of which are even and $n$ of which are odd. For brevity we shall write their parities as 
$\pr{e_a} = \pr a.$ There is a {\it distinguished} choice of the basis vectors, such that
\be\pr a = \begin{cases} 0 & 1\leq a\leq m, \\ 1 & m+1 \leq a \leq m+n.\end{cases}\label{dist}\ee
However, it is convenient to leave unspecified, for now, which basis vectors are even and which are odd, with the following exception: 
\be\text{we shall always assume that $e_1$ is even, i.e. $\pr 1=0$.}\label{1}\ee

The space $\End(\C^{m|n})$ is a Lie superalgebra with the product $[X,Y] := XY - (-1)^{\pr X \pr Y} YX$. This Lie superalgebra can be described as a Lie superalgebra generated by elements $E_{ab}$, $1\leq a,b\leq m+n$, with parity 
\be \pr{E_{ab}} := \pr a + \pr b\nn,\ee
and the supercommutator given by:
\be [E_{ab}, E_{cd}] = \delta_{bc} E_{ad} - (-1)^{(\pr a + \pr b)(\pr c + \pr d)} \delta_{ad} E_{cb}.\label{bdef}\ee
The element $E_{ab}$ corresponds to the linear operator $e_{ab}\in \End(\C^{m|n})$ such that
\be e_{ab} e_c = \delta_{bc} e_a.\nn\ee
We call this Lie superalgebra $\gl(m|n)$ and the space $\C^{m|n}$ its \emph{defining representation}.

Note that the Lie superalgebras $\gl(m|n)$ and $\gl(n|m)$ are isomorphic, as flipping the parity of all the $e_a$ does not change the commutator. Also, the Dynkin diagram -- see below -- is invariant under flipping the parity of all the $e_a$ as well. In particular, the convention (\ref{1}) does not lead to a loss of generality.

The \emph{supertrace} is the linear map $\str: \End(\C^{m|n})\to \C$ defined by
\be \str(e_{ab}) = \delta_{ab} (-1)^{\pr a}.\nn\ee
There is a non-degenerate bilinear form on $\gl(m|n)$ defined by taking the supertrace in the defining representation:
\be (E_{ab}, E_{cd}) := \str e_{ab} e_{cd} = \delta_{bc} \delta_{ad} (-1)^{\pr a} .\nn\ee 
This form is invariant and graded-symmetric. That is $0=([Z,X],Y]) + (-1)^{\pr{Z}\pr{X}} (X,[Z,Y])$ and $(X,Y) = (-1)^{\pr X \pr Y} (Y,X)$ for $X,Y,Z\in \gl(m|n)$.

\subsection{Weights and root systems}
We use the standard Cartan decomposition, $\gl(m|n) = \mf n^- \oplus \h \oplus \mf n^+$, in which the Cartan subalgebra $\h\subset\gl(m|n)$ is generated by $E_{aa}$, $1\leq a \leq m+n,$ and the raising (resp. lowering) operators are $E_{ab}\in \mf n^+$ (resp. $E_{ba}\in \mf n^-$), $1\leq a < b \leq m+n$.  

Let $\eps_a$, $1\leq a \leq m+n,$ be the basis of weight space, $\h^*$, dual to $E_{aa}$. 
The restriction of the inner product $(\cdot,\cdot)$ to $\h$ is symmetric, since $E_{aa}$ is even for all $a$. We identify $\h$ and $\h^*$ by means of this inner product. From 
$\eps_a := (-1)^{\pr a} (E_{aa},\cdot)$, we obtain  $\eps_a\equiv (-1)^{\pr a}E_{aa}$. 
Note that $$(\eps_a,\eps_b) = (-1)^{\pr a} \delta_{ab}.$$ 

The roots of $\gl(m|n)$ are $\eps_a-\eps_b\in \h^*$, $1\leq a\neq b\leq m+n$.
We choose as our set of simple positive roots $\alpha_a:=\eps_a - \eps_{a+1}$, $1\leq a\leq m+n-1$. The corresponding simple root vectors are 
\be E_a := E_{a,a+1}, \quad F_a := E_{a+1,a},\nn\ee 
and the simple coroots are  
\be \alpha_a^\vee = H_a := [E_a,F_a] = E_{aa} - (-1)^{\pr a+\pr{a+1}} E_{a+1,a+1},\nn\ee
with $1\leq a<m+n$. These $(E_a,F_a,H_a)$ are the Chevalley generators of the subalgebra $\sl(m|n)\subset \gl(m|n)$. 

The \emph{Cartan matrix} $(a_{ab})_{1\leq a,b< m+n}$ is defined by $[H_a,E_b] = a_{ba} E_b$; thus $a_{ba} = \alpha_b(\alpha^\vee_a) = (\alpha^\vee_a, \alpha_b)$. 
The Cartan matrix depends crucially on the choice of which elements of the basis $e_a\in \C^{m|n}$ are even and which odd. The non-zero entries are those of the on-diagonal $2\times 2$ submatrices,
\be \bmx a_{bb} & a_{b,b+1} \\ a_{b+1, b} & a_{b+1,b+1} \emx, \quad 1\leq b< m+n-2,  \nn\ee
and are given by the following four cases:
\be \begin{matrix} & \pr {E_{b+1}} = 0 & \pr {E_{b+1}} = 1 \\ 
      \pr {E_b} =0  & \bmx 2 & -1 \\  -1 & 2 \emx & \bmx 2 & -1 \\  -1 & 0 \emx_{\phantom \sum} \\
      \pr {E_b} =1  & \bmx 0 & -1 \\  1 & 2 \emx & \bmx 0 & -1 \\  1 & 0 \emx.\nn \end{matrix}\ee

The \emph{symmetrized Cartan matrix} is the symmetric matrix with entries $(\alpha_a,\alpha_b)_{1\leq a,b < m+n}$. It has the following $2\times 2$ block submatrices:
\be\begin{matrix} & \pr{b} = 0 & \pr{b} = 1 \\
\begin{matrix} \pr {b+1} =0 \begin{matrix} \phantom 1\\ \phantom 1\end{matrix}  \\ \pr {b+1} = 1 \begin{matrix} \phantom 1\\\phantom 2 \end{matrix}\end{matrix} &
\overbrace{\begin{matrix} \pr {b+2} = 0 & \pr {b+2} = 1 \\ 
     \bmx 2 & -1 \\  -1 & 2 \emx & \bmx 2 & -1 \\  -1 & 0 \emx_{\phantom \sum} \\
     \bmx 0 & 1 \\  1 & 0 \emx & \bmx 0 & 1 \\  1 & -2 \emx \end{matrix}} & 
\overbrace{\begin{matrix} \pr {b+2} = 0 & \pr {b+2} = 1 \\ 
       \bmx 0 & -1 \\  -1 & 2 \emx & \bmx 0 & -1 \\  -1 & 0 \emx_{\phantom \sum} \\
       \bmx -2 & 1 \\  1 & 0\emx & \bmx -2 & 1 \\  1 & -2 \emx. \end{matrix}}
\end{matrix}
\nn\ee

The \emph{Dynkin diagram} is drawn with a \begin{tikzpicture}[baseline =-3,scale=.6] \filldraw[fill=white] (0,0) circle (2mm);\end{tikzpicture} for each simple root of square length $\pm 2$ and a \begin{tikzpicture}[baseline =-3,scale=.6] \filldraw[fill=white] (0,0) circle (2mm);\draw[thick] (0,0)++(-.15,-.15) -- ++(.3,.3);\draw[thick] (0,0)++(.15,-.15) -- ++(-.3,.3);\end{tikzpicture}  for each simple root of square length $0$. The Dynkin diagram, the Cartan matrix, and the parities of the basis vectors $e_a$ with property (\ref{1}), all encode the same information.
In particular, the \emph{distinguished choice} of Dynkin diagram is the choice that corresponds to the distinguished choice of parities (\ref{dist}).

\begin{exmp}\label{ex1} For the Lie algebra $\gl(4|3)$ three of the possible Dynkin diagrams are 
\be 
\begin{tikzpicture}[baseline =-5,scale=.6] 
\draw[thick] (1,0) -- (6,0);
\foreach \x in {1,2,3,4,5,6}
\filldraw[fill=white] (\x,0) circle (2mm);
\draw[thick] (4,0)++(-.15,-.15) -- ++(.3,.3);\draw[thick] (4,0)++(.15,-.15) -- ++(-.3,.3);
\end{tikzpicture},\qquad
\begin{tikzpicture}[baseline =-5,scale=.6] 
\draw[thick] (1,0) -- (6,0);
\foreach \x in {1,2,3,4,5,6}
\filldraw[fill=white] (\x,0) circle (2mm);
\draw[thick] (5,0)++(-.15,-.15) -- ++(.3,.3);\draw[thick] (5,0)++(.15,-.15) -- ++(-.3,.3);
\draw[thick] (2,0)++(-.15,-.15) -- ++(.3,.3);\draw[thick] (2,0)++(.15,-.15) -- ++(-.3,.3);
\end{tikzpicture},\qquad
\begin{tikzpicture}[baseline =-5,scale=.6] 
\draw[thick] (1,0) -- (6,0);
\foreach \x in {1,2,3,4,5,6}{
\filldraw[fill=white] (\x,0) circle (2mm);
\draw[thick] (\x,0)++(-.15,-.15) -- ++(.3,.3);\draw[thick] (\x,0)++(.15,-.15) -- ++(-.3,.3);}
\end{tikzpicture}.\nn\ee
(The first of these is the distinguished diagram.) The corresponding Cartan matrices are, respectively:
\be \bmx 2 & -1 &   &    &    & 0  \\ 
        -1 & 2  & -1&    &    &  \\
           & -1 & 2 & -1 &    & \\
           &   & -1 & 0  & -1 & \\
           &   &    & 1  & 2  & -1 \\
         0  &   &    &    & -1 & 2\emx ,\quad
\bmx 2 & -1 &    & & &  0\\ 
        -1 & 0  & -1 &  & & \\
           & 1 & 2  & -1 & & \\
           &    & -1 & 2 & -1 & \\
           &    &    & -1 & 0  & -1 \\
        0   &   &    &    &  1 & 2\emx ,\quad
\bmx 0 & -1 &    & & &0 \\ 
        1 & 0  & -1 & & & \\
           & 1 & 0  & -1 & & \\
           &    & 1 & 0 & -1 & \\
           &    &    & 1 & 0  & -1 \\
       0    &   &    &    & 1 & 0\emx. \nn\ee 
\end{exmp} 

The \emph{Weyl group} $W$ of $\gl(m|n)$ is by definition the Weyl group $S_m\times S_n$ of the even subalgebra $\gl(m)\oplus \gl(n)$. It is realized as the group of all permutations $\sigma$ of the basis vectors $e_a$ that preserve parity, $\pr{\sigma(a)}=\pr a$.
Thus, as is well known, not every pair of Borel subalgebras of $\gl(m|n)$ are Weyl-conjugate, in contrast to usual situation for $\gl(m)$. 

\subsection{Finite-dimensional irreducible modules}\label{fdmod}
Given $\lambda\in \h^*$, let $L(\lambda)$ denote the irreducible $\gl(m|n)$-module with highest weight $\lambda$. 
That is, $L(\lambda)$ is the irreducible quotient of the Verma module $\ind^{\gl(m|n)}_{\h\oplus \mf n^+} \C v_\lambda$ containing $v_\lambda$, where $v_\lambda\neq 0$ is such that $hv_\lambda = \lambda(h) v_\lambda$ for all $h\in \h$ and $xv_\lambda=0$ for all $x\in \mf n^+$. (We assume $v_\lambda$ is homogeneous with respect to the $\Z_2$-gradation, but we leave its parity unspecified.)

For the distinguished choice of Dynkin diagram (only), given by \eqref{dist}, we have the following. The finite-dimensional irreducible representations of $\gl(m|n)$ are, up to isomorphism, precisely those $L(\lambda)$ for which $\lambda=\sum_{i=1}^m\lambda_i\eps_i+\sum_{j=1}^{n} \mu_j \eps_{m+j}$ is such that $\lambda_i-\lambda_{i+1}\in\Z_{\geq 0}$ and $\mu_j-\mu_{j+1}\in \Z_{\geq 0}$ for all $1\leq i\leq m-1$ and $1\leq j\leq n-1$.

\subsection{Polynomial modules}\label{polynomialmodules}
The category of finite-dimensional modules of $\gl(m|n)$ is not in general semisimple, but the smaller category of finite-dimensional \emph{polynomial modules} is. By definition, a $\gl(m|n)$-module $V$ is polynomial if $\h$ acts semisimply on $V$ and $(\lambda,\eps_i)\in \Z_{\geq 0}$ for every weight $\lambda$ of $V$. The irreducible polynomial modules are precisely those irreducibles that occur as submodules of tensor powers of the defining representation $\C^{m|n}$. 

\subsection{Hook diagrams}
Irreducible polynomial modules are in 1-1 correspondence with \emph{hook partitions}. A hook partition for $\gl(m|n)$ is a partition $\mu= (\mu_1, \mu_2,\dots)$, $\mu_1\geq \mu_2\geq\dots$, such that $\mu_{m+1}\leq n$. Hook partitions correspond to \emph{hook diagrams}: that is, to Young diagrams with no box in position $(m+1,n+1)$. We denote by $V(\mu)$ the irreducible polynomial module associated to a hook partition $\mu$. The highest weight of $V(\mu)$ depends on the choice of parities $\pr a$, $1\leq a \leq m+n$, i.e. on the choice of Dynkin diagram, as follows:
\be V(\mu) \cong L(\lambda) \quad\text{with}\quad \lambda=\sum_{a:\pr a = 0} \eps_a \left(\mu_a - \sum_{b<a} \pr b\right) 
 +  \sum_{a:\pr a = 1} \eps_a \left(\mu'_a - \sum_{b<a} (1-\pr b)\right),
\label{VLeqn}\ee
where $\mu'$ is the conjugate partition. This expression has a natural pictorial interpretation, as the following example illustrates.

\begin{exmp}\label{diagexmp} Consider $\gl(4|3)$ and the hook partition $\mu = (7,6,4,4,3,3,1,1,1)$.
Then with respect to each of the three Dynkin diagrams given in Example \ref{ex1} in turn, the components in the basis $\eps_a$ of the highest weight of the polynomial module $V(\mu)$ are:
\be (7,6,4,4,5,2,2),\quad (7,6,7,4,4,1,1),\quad (7,8,5,4,2,3,1). \nn\ee
These can be read off from the hook diagram as follows:
\be \begin{tikzpicture}[baseline=-25pt,shorten >=-4pt,shorten <= - 4pt,scale=.4,yscale=-1,every node/.style={minimum size=.4cm,inner sep=0mm,draw,gray,rectangle}]
\foreach \x in {1,2,3,4,5,6,7} {\node at (\x,1) {};}
\foreach \x in {1,2,3,4,5,6} {\node at (\x,2) {};}
\foreach \x in {1,2,3,4} {\node at (\x,3) {};}
\foreach \x in {1,2,3,4} {\node at (\x,4) {};}
\foreach \x in {1,2,3} {\node at (\x,5) {};}
\foreach \x in {1,2,3} {\node at (\x,6) {};}
\foreach \x in {1} {\node at (\x,7) {};}
\foreach \x in {1} {\node at (\x,8) {};}
\foreach \x in {1} {\node at (\x,9) {};}
\draw[<->] (1,1) -- (7,1) ;
\draw[<->] (1,2) -- (6,2) ;
\draw[<->] (1,3) -- (4,3) ;
\draw[<->] (1,4) -- (4,4) ;
\draw[<->] (1,5) -- (1,9) ;
\draw[<->] (2,5) -- (2,6) ;
\draw[<->] (3,5) -- (3,6) ;
\end{tikzpicture},\quad
\begin{tikzpicture}[baseline=-25pt,shorten >=-4pt,shorten <= - 4pt,
scale=.4,yscale=-1,every node/.style={minimum size=.4cm,inner sep=0mm,draw,gray,rectangle}]
\foreach \x in {1,2,3,4,5,6,7} {\node at (\x,1) {};}
\foreach \x in {1,2,3,4,5,6} {\node at (\x,2) {};}
\foreach \x in {1,2,3,4} {\node at (\x,3) {};}
\foreach \x in {1,2,3,4} {\node at (\x,4) {};}
\foreach \x in {1,2,3} {\node at (\x,5) {};}
\foreach \x in {1,2,3} {\node at (\x,6) {};}
\foreach \x in {1} {\node at (\x,7) {};}
\foreach \x in {1} {\node at (\x,8) {};}
\foreach \x in {1} {\node at (\x,9) {};}
\draw[<->] (1,1) -- (7,1) ;
\draw[<->] (1,2) -- (6,2) ;
\draw[<->] (1,3) -- (1,9) ;
\draw[<->] (2,3) -- (2,6) ;
\draw[<->] (3,3) -- (3,6) ;
\draw[<->] (3.99,3) -- (4.01,3);
\draw[<->] (3.99,4) -- (4.01,4);
\end{tikzpicture},\quad
\begin{tikzpicture}[baseline=-25pt,shorten >=-4pt,shorten <= - 4pt,scale=.4,yscale=-1,every node/.style={minimum size=.4cm,inner sep=0mm,draw,gray,rectangle}]
\foreach \x in {1,2,3,4,5,6,7} {\node at (\x,1) {};}
\foreach \x in {1,2,3,4,5,6} {\node at (\x,2) {};}
\foreach \x in {1,2,3,4} {\node at (\x,3) {};}
\foreach \x in {1,2,3,4} {\node at (\x,4) {};}
\foreach \x in {1,2,3} {\node at (\x,5) {};}
\foreach \x in {1,2,3} {\node at (\x,6) {};}
\foreach \x in {1} {\node at (\x,7) {};}
\foreach \x in {1} {\node at (\x,8) {};}
\foreach \x in {1} {\node at (\x,9) {};}
\draw[<->] (1,1) -- (7,1) ;
\draw[<->] (1,2) -- (1,9) ;
\draw[<->] (2,2) -- (6,2) ;
\draw[<->] (2,3) -- (2,6) ;
\draw[<->] (3,3) -- (4,3) ;
\draw[<->] (3,4) -- (3,6) ;
\draw[<->] (3.99,4) -- (4.01,4);
\end{tikzpicture}. \nn\ee
\end{exmp}

\subsection{Pieri rule}\label{pieri}
The defining representation $\C^{m|n}$ is the polynomial module corresponding to a diagram with a single box. 
Just as in usual case of $\gl(m)$, one has the following Pieri rule for the decomposition into irreducibles of the tensor product of $\C^{m|n}$ with any polynomial module:
\be V(\mu) \otimes \C^{m|n} = \bigoplus_{\rho} V(\rho) \ee
where the direct sum is over all those hook partitions $\rho$ whose hook diagrams can be obtained by adding one box to the hook diagram of $\mu$. 

We shall need the following.
\begin{lem}\label{boxlem}
Let $\mu$ be a hook partition and $\lambda=\sum_{a=1}^{m+n}\eps_a\lambda_a\in \h^*$ the corresponding weight, i.e.  $L(\lambda) \cong V(\mu)$, cf. \eqref{VLeqn}. Then 
\be \lambda_r - (-1)^{\pr{a+1}+\pr r} \lambda_{a+1} + (-1)^{\pr r}\sum_{b=r+1}^{a} (-1)^{\pr b} \geq  0  \label{bd}\ee
for all $r$ and $a$ with $1\leq r\leq a<m+n$. 

Moreover, for each $a$, $1\leq a<m+n$, the weight $\lambda+\eps_{a+1}$ corresponds to a hook partition if and only if the inequality \eqref{bd} is strict for every  $r$ with $1\leq r\leq a$.
\end{lem}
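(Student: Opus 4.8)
The plan is to feed the explicit description \eqref{VLeqn} of $\lambda$ into the left-hand side of \eqref{bd} and reduce everything to elementary statements about the Young diagram of $\mu$. Writing $E(a)=\#\{b\le a:\pr b=0\}$ and $O(a)=\#\{b\le a:\pr b=1\}$, the formula \eqref{VLeqn} says that $\lambda_a=\mu_{E(a)}-O(a)$ when $\pr a=0$ and $\lambda_a=\mu'_{O(a)}-E(a)$ when $\pr a=1$; thus the even coordinates of $\lambda$ record row lengths of $\mu$ and the odd coordinates record column lengths, and as $a$ runs from $1$ to $m+n$ the parities trace out the staircase separating the arm from the leg of the hook. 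It is convenient first to rewrite the left-hand side of \eqref{bd}, using $(\eps_b,\eps_b)=(-1)^{\pr b}$, as $(-1)^{\pr r}\bigl[(\lambda,\eps_r-\eps_{a+1})+\sum_{b=r+1}^{a}(\eps_b,\eps_b)\bigr]$, which exhibits it as a parity-signed pairing of $\lambda$ against the positive root $\eps_r-\eps_{a+1}$, corrected by the staircase term $\sum_{b=r+1}^a(-1)^{\pr b}=\bigl(E(a)-E(r)\bigr)-\bigl(O(a)-O(r)\bigr)$.

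\textbf{First assertion.} Next I would substitute the dictionary above and split into four cases according to the parities of $r$ and of $a+1$. When $\pr r=\pr{a+1}$ the interior contributions telescope and \eqref{bd} collapses to an inequality of the shape $\mu_i-\mu_{i'}+(i'-i-1)\ge0$ among row lengths (both even) or the identical statement for $\mu'$ among column lengths (both odd), where $i=E(r)$, $i'=E(a+1)$; these hold at once because $\mu_1\ge\mu_2\ge\cdots$ and $\mu'_1\ge\mu'_2\ge\cdots$. When $\pr r\ne\pr{a+1}$ the substitution instead produces a mixed inequality relating a row length $\mu_i$ to a column length $\mu'_j$ with $i=E(r)$, $j=O(a{+}1)$; after cancelling the offsets $O(r),E(a)$ against the correction term this reduces to a single compatibility condition expressing that $\mu_i$ and $\mu'_j$ come from one honest Young diagram, i.e. to the positivity of the relevant hook datum. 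Since $\mu$ is a genuine hook partition all four families hold simultaneously, proving \eqref{bd}.

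\textbf{Second assertion.} Here I would invoke the Pieri rule of Section \ref{pieri}. Under \eqref{VLeqn} the passage $\lambda\mapsto\lambda+\eps_{a+1}$ increments exactly the coordinate $\lambda_{a+1}$, and so corresponds to adjoining a single box to the hook diagram of $\mu$ in the row or column singled out by the step $a\mapsto a+1$ of the staircase. By the Pieri rule the new weight comes from a hook partition precisely when that box is addable, i.e. when the enlarged diagram is again a hook Young diagram. Re-running the case analysis above with $\lambda$ replaced by $\lambda+\eps_{a+1}$, one sees that addability fails exactly when one of the monotonicity or compatibility inequalities indexed by some $r\le a$ is already an equality for $\lambda$; that is, when \eqref{bd} is non-strict for that particular $r$. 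Reading this equivalence in both directions yields the stated ``if and only if''.

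\textbf{Main obstacle.} The delicate point is the opposite-parity case together with the bookkeeping of the correction term $\sum_{b=r+1}^a(-1)^{\pr b}$: one must verify that, after substituting \eqref{VLeqn}, the offsets $O(r),E(a)$ (and their shifts between $a$ and $a+1$) combine so that the mixed inequality is exactly the arm--leg compatibility, with no spurious constant left over, and that the borderline case of equality matches the failure of addability on the nose. Tracking the signs $(-1)^{\pr r}$ and the interplay of $E$ and $O$ across the four cases is where essentially all of the care is required.
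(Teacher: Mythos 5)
Your overall strategy is the same as the paper's: split into the four cases according to the parities of $r$ and $a+1$, substitute the combinatorial description of $\lambda$ coming from \eqref{VLeqn}, and reduce \eqref{bd} to statements about the Young diagram of $\mu$. The two equal-parity cases are handled correctly: your reduction to $\mu_i-\mu_{i'}+(i'-i-1)\ge 0$ (and the transposed statement for $\mu'$) is exactly what the substitution yields, and it follows from the monotonicity of $\mu$ and $\mu'$.

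The genuine gap is in the mixed-parity case, which you flag as the main obstacle but do not resolve. When $\pr r=0$ and $\pr{a+1}=1$, your substitution turns the left-hand side of \eqref{bd} into $\mu_i+\mu'_j-i-j+1$ with $i=E(r)$, $j=O(a+1)$. This quantity is \emph{not} automatically nonnegative for a hook partition: it equals the hook length of the box $(i,j)$ only when that box lies inside the diagram, and it is $\le -1$ whenever the box lies outside it. So the entire content of \eqref{bd} in this case is the claim that the box $\left(E(r),O(a+1)\right)$ belongs to the diagram of $\mu$, or else that the degenerate situation is exactly $\lambda_{a+1}=0$ --- and that is the statement which still has to be proved. (A related warning: \eqref{VLeqn} taken literally can return a negative value of $\lambda_{a+1}$ for small diagrams, e.g. $\mu$ a single box for $\gl(2|1)$ with the distinguished parities, so a blind substitution of the formula is not safe in this regime.) The paper's proof supplies precisely the missing step: it first establishes, by reading the diagram in the extreme parity configuration and then adding inequalities inductively, the stronger statement $\lambda_r+\sum_{b=r+1}^a(-1)^{\pr b}\ge 0$ with equality possible only if $\lambda_{a+1}=0$, and only then obtains \eqref{bd} by adding $\lambda_{a+1}\ge 0$. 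The same point is what makes the ``moreover'' equivalence work --- equality in \eqref{bd} for some $r$ has to be matched on the nose with non-addability of the box --- so that part of your argument inherits the gap as well.
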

\begin{proof}
Consider those pairs $(r,a+1)$, $1\leq r\leq a< m+n$, such that $\pr r = \pr {a+1} =0$. If $\pr b = 1$ for all $b$ with $r+1\leq b \leq a$ then the relevant part of the diagram of $\mu$ resembles
\be \begin{tikzpicture}[baseline=-25pt,shorten >=-4pt,shorten <= - 4pt,scale=.4,yscale=-1,every node/.style={minimum size=.4cm,inner sep=0mm,draw,gray,rectangle}]
\foreach \x in {1,2,3,4,5,6,7,8,9,10} {\node at (\x,1) {};}
\foreach \x in {1,2,3,4,5,6,7,8} {\node at (\x,2) {};}
\foreach \x in {1,2,3,4} {\node at (\x,3) {};}
\foreach \x in {1,2,3,4} {\node at (\x,4) {};}
\foreach \x in {1,2,3,4} {\node[dashed] at (\x,5) {};}
\foreach \x in {1,2,3,4} {\node[dotted] at (\x,6) {};}
\draw[<->] (1,1) -- (10,1) node[above=3mm,midway,draw=none,black] {$\lambda_r$};
\draw[<->] (5,2) -- (8,2)  node[below=3mm,midway,draw=none,black] {$\lambda_{a+1}$};
\draw[<->] (1,2) -- (1,6)  node[below=3mm,draw=none,black] {$\scriptstyle{\lambda_{r+1}}$};
\draw[<->] (2,2) -- (2,6)  node[below=3mm,draw=none,black] {$\scriptstyle{\quad\dots}$};
\draw[<->] (3,2) -- (3,6) ;
\draw[<->] (4,2) -- (4,6)  node[below=3mm,draw=none,black] {$\scriptstyle{\lambda_{a}}$};
\end{tikzpicture} \nn\ee
and one sees that necessarily $\lambda_{r} - \lambda_{a+1} - \sum_{b=r+1}^a1 \geq 0$.   
To treat the general case with $\pr r=\pr{a+1}=0$,  one adds together such inequalities, and finds by an induction that 
\be \lambda_r - \lambda_{a+1} + \sum_{b=r+1}^a (-1)^{\pr b} \geq 0.\label{00}\ee 
(Actually, one finds the stronger result that the left-hand side is greater than or equal to the number of distinct $b$, $r+1\leq b\leq a$, for which $\pr b = 0$. But we do not need this.) 


Similarly, one finds that 
\be -\lambda_r + \lambda_{a+1} + \sum_{b=r+1}^a (-1)^{\pr b} \leq 0\label{11}\ee
for all pairs $(r,a+1)$, $1\leq r\leq a<m+n$, such that $\pr r=\pr{a+1} = 1$. 

Next, consider pairs $(r,a+1)$, $1\leq r\leq a<m+n$, such that $\pr r =0$ and $\pr {a+1} =1$. If $\pr b=1$ for all $r+1\leq b\leq a$ then the relevant part of the diagram resembles
\be\nn \begin{tikzpicture}[baseline=-25pt,shorten >=-4pt,shorten <= - 4pt,scale=.4,yscale=-1,every node/.style={minimum size=.4cm,inner sep=0mm,draw,gray,rectangle}]
\foreach \x in {1,2,3,4,5,6,7,8,9,10} {\node at (\x,1) {};}
\foreach \x in {1,2,3,4,5} {\node at (\x,2) {};}
\foreach \x in {1,2,3,4,5} {\node at (\x,3) {};}
\foreach \x in {1,2,3,4,5} {\node at (\x,4) {};}
\foreach \x in {1,2,3,4} {\node[dashed] at (\x,5) {};}
\foreach \x in {1,2,3,4} {\node[dotted] at (\x,6) {};}
\draw[<->] (1,1) -- (10,1) node[above=3mm,midway,draw=none,black] {$\lambda_r$};
\draw[<->] (5,2) -- (5,4)  node[right=3mm,midway,draw=none,black] {$\lambda_{a+1}$};
\draw[<->] (1,2) -- (1,6)  node[below=3mm,draw=none,black] {$\scriptstyle{\lambda_{r+1}}$};
\draw[<->] (2,2) -- (2,6)  node[below=3mm,draw=none,black] {$\scriptstyle{\quad\dots}$};
\draw[<->] (3,2) -- (3,6) ;
\draw[<->] (4,2) -- (4,6)  node[below=3mm,draw=none,black] {$\scriptstyle{\lambda_{a}}$};
\end{tikzpicture} \ee
and one sees that $\lambda_r - \sum_{b=r+1}^a 1 \geq 0$, with equality possible only if $\lambda_{a+1}=0$. By adding inequalities of the type \eqref{00} one can then treat the general case with $\pr r=0$, $\pr{a+1} =1$. 
One finds that $\lambda_r + \sum_{b=r+1}^a (-1)^{\pr b} \geq 0$ with equality possible only if $\lambda_{a+1}=0$. 
An equivalent statement is that 
\be \lambda_r +\lambda_{a+1} + \sum_{b=r+1}^a (-1)^{\pr b} \geq 0,\nn\ee with equality possible only if $\lambda_{a+1}=0$.

Finally, for all pairs $(r,a+1)$, $r\leq a$, such that $\pr r = 1$, $\pr{a+1} = 0$ we find similarly that $-\lambda_r - \lambda_{a+1} + \sum_{b=r+1}^a(-1)^{\pr b} \leq 0$ with equality possible only if $\lambda_{a+1} = 0$.

The ``moreover'' part follows.
\end{proof}

\subsection{Quadratic Casimir}\label{casimir}
The Casimir element $\mc C := \sum_{a,b=1}^{m+n}E_{ab}E_{ba}(-1)^{\pr b}\in U(\gl(m|n))$ is central. Its value on the irreducible highest weight representation $L(\lambda)$ is
\be  \mc C_\lambda := \sum_{a=1}^{m+n} \left(\lambda_a^2 (-1)^{\pr a} + \lambda_a \left( \sum_{b>a} (-1)^{\pr a} - \sum_{b<a} (-1)^{\pr b} \right) \right).\nn\ee 
This can be computed by evaluating $\mc C$ on the highest weight vector $v_\lambda$. Furthermore, by direct calculation, we have the following.
\begin{lem}\label{clem} Let $\lambda= \sum_{a=1}^{m+n}\lambda_a\eps_a$. For all $r,a$ such that $1\leq r\leq a\leq m+n-1$ we have
\be-\half\left( \mc C_{\lambda+\eps_{a+1}} - \mc C_{\lambda + \eps_r }\right)
 = (-1)^{\pr r} \lambda_r - (-1)^{\pr{a+1}} \lambda_{a+1} + \sum_{b=r}^a (-1)^{\pr b}.\nn\ee
\qed
\end{lem}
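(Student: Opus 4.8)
The plan is to prove the identity by direct substitution into the explicit expression for $\mc C_\lambda$ recorded above, exploiting the fact that adding a single $\eps_s$ to a weight alters only one of its coordinates. I would write the eigenvalue compactly as $\mc C_\nu = \sum_{c=1}^{m+n}\bigl(\nu_c^2(-1)^{\pr c} + \nu_c\, h_c\bigr)$, where $h_c := \sum_{b>c}(-1)^{\pr b} - \sum_{b<c}(-1)^{\pr b}$ is the coefficient linear in $\nu_c$. The key structural observation is that $h_c$ depends only on the parities $\pr b$ and \emph{not} on the components of the weight; hence passing from $\lambda$ to $\lambda+\eps_s$, which replaces $\lambda_s$ by $\lambda_s+1$ and fixes every other coordinate, affects only the single summand $c=s$. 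A one-line computation then yields
\be \mc C_{\lambda+\eps_s} - \mc C_\lambda = (2\lambda_s+1)(-1)^{\pr s} + h_s. \nn\ee

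Next I would apply this with $s=a+1$ and with $s=r$ and subtract, so that the common term $\mc C_\lambda$ cancels:
\be \mc C_{\lambda+\eps_{a+1}} - \mc C_{\lambda+\eps_r} = (2\lambda_{a+1}+1)(-1)^{\pr{a+1}} + h_{a+1} - (2\lambda_r+1)(-1)^{\pr r} - h_r. \nn\ee
Multiplying by $-\half$, the terms linear in $\lambda$ reproduce exactly the claimed $(-1)^{\pr r}\lambda_r - (-1)^{\pr{a+1}}\lambda_{a+1}$, so it remains only to match the parity-dependent constant, i.e.\ to check
\be (-1)^{\pr r} - (-1)^{\pr{a+1}} + h_r - h_{a+1} = 2\sum_{b=r}^a (-1)^{\pr b}. \nn\ee

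This last identity I would settle by telescoping. Splitting each $h_c$ into its ``above'' and ``below'' sums and cancelling the overlapping ranges gives $\sum_{b>r}(-1)^{\pr b} - \sum_{b>a+1}(-1)^{\pr b} = \sum_{b=r+1}^{a+1}(-1)^{\pr b}$ and $\sum_{b<a+1}(-1)^{\pr b} - \sum_{b<r}(-1)^{\pr b} = \sum_{b=r}^{a}(-1)^{\pr b}$, whence $h_r - h_{a+1} = 2\sum_{b=r+1}^a(-1)^{\pr b} + (-1)^{\pr r} + (-1)^{\pr{a+1}}$; adding $(-1)^{\pr r} - (-1)^{\pr{a+1}}$ then collapses the whole expression to $2\sum_{b=r}^a(-1)^{\pr b}$, as required.

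There is no real conceptual obstacle: as the phrase ``by direct calculation'' preceding the lemma signals, the content is elementary, and the only genuine care needed is the bookkeeping of the signs $(-1)^{\pr b}$ in the telescoping step, where an off-by-one in a summation range or a dropped endpoint term is the most likely source of error. It is worth noting that the telescoping succeeds precisely because the coefficient of $\lambda_c$ in $\mc C_\lambda$ has the symmetric ``above minus below'' form; this is the super-analogue of the classical shift $m+n+1-2c$ appearing for $\gl(m+n)$, with each step weighted by $(-1)^{\pr b}$.
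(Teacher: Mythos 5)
Your proof is correct and is precisely the direct calculation that the paper omits (the lemma is stated with only the preceding remark ``by direct calculation'' and an immediate \textit{qed}). One point worth flagging: your linear coefficient $h_c = \sum_{b>c}(-1)^{\pr b} - \sum_{b<c}(-1)^{\pr b}$ silently corrects the paper's displayed formula for $\mc C_\lambda$, whose first inner sum literally reads $\sum_{b>a}(-1)^{\pr a}$; your version is the correct eigenvalue (as one checks by evaluating $\mc C$ on $v_\lambda$, where the $a<b$ terms contribute $\big(\lambda_a-(-1)^{\pr a+\pr b}\lambda_b\big)(-1)^{\pr b}$) and is the one for which your telescoping closes to $2\sum_{b=r}^a(-1)^{\pr b}$, so the discrepancy is a typo in the paper's display rather than an error in your computation.
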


\subsection{Shapovalov form}
The linear map $\varphi: \gl(m|n) \to \gl(m|n)$ defined by
\be \varphi: E_{ab} \mapsto E_{ba} \ee
is an anti-involution (i.e. $\varphi^2 = \id$ and $\varphi([X,Y]) = [\varphi(Y),\varphi(X)]$) with the property that $\varphi(\h)=\h$ and $\varphi(\mf n^\pm) = \mf n^\mp$. On every irreducible module $L(\lambda)$, one can define a symmetric non-degenerate bilinear form, $\left<\cdot,\cdot\right>$, the \emph{Shapovalov form}. Let $v_\lambda$ be a highest weight vector. Then $\left<\cdot,\cdot\right>$ is uniquely defined by 
\be \left< v_\lambda,v_\lambda\right> = 1,\nn\ee 
and
\be \left< E_{ab} v,w\right> = \left< v,\varphi(E_{ab}) w\right> = \left< v, E_{ba}w\right>. \label{Sflip}\ee
(See \cite{Gorelik}.)
The Shapovalov form is even i.e. $\left<v,w\right>=0$ whenever $v,w$ are homogeneous elements of different parities. Moreover $\left<v,w\right>=0$ if $v$ and $w$ are weight vectors of distinct weights.

The form extends to a non-degenerate form, the \emph{tensor Shapovalov form}, on tensor products of such irreducibles, according to 
\be \left< v_1\otimes\dots\otimes v_N, w_1\otimes\dots \otimes w_N\right> = \left<v_1,w_1\right>\dots \left<v_N,w_N\right>. \nn\ee
(Note the lack of signs in our conventions here.) 

The tensor Shapovalov form obeys \eqref{Sflip}.
Let $V(\mu)$ be an irreducible polynomial module. If $V(\mu)$ is a submodule of $(\C^{m|n})^{\otimes N}$ then the tensor Shapovalov form restricted to $V(\mu)$ coincides with the Shapovalov form up to an overall  non-zero  multiplicative constant.

The basis $e_a$ of the defining representation $\C^{m|n}$ is orthonormal with respect to the Shapovalov form. It follows that the sesquilinear form corresponding to the Shapovalov form is positive definite on $\C^{m|n}$. 
Hence the sesquilinear form corresponding to the tensor Shapovalov form is positive definite on tensor products of $\C^{m|n}$ and submodules thereof. 
Therefore the sesquilinear form corresponding to the Shapovalov form is Hermitian on all irreducible polynomial representations. 

\section{Gaudin Hamiltonians}\label{Ham sec}

Let $\bm\lambda = (\lambda^{(i)})_{i=1}^N$ be an $N$-tuple, $N>1$, of elements $\lambda^{(i)}\in \h^*$ such that the irreducible $\gl(m|n)$-modules $L(\lambda^{(i)})$ are all finite-dimensional. Let $\bm z = (z_i)_{i=1}^N$ be an $N$-tuple of pairwise distinct points in $\C$. Given this data, the \emph{(quadratic) Gaudin hamiltonians} are the linear maps $\HH_i\in \End\left(L(\lambda_1) \otimes \dots L(\lambda_N)\right)$
given by
\be\label{Gaudin} \HH_i := \sum_{\substack{j=1 \\ j\neq i}}^N  \frac{\sum_{a,b} E_{ab}^{(i)} E_{ba}^{(j)} (-1)^{\pr b}}{z_i-z_j}, \qquad 1\leq i\leq N,\ee
where $E_{ab}^{(k)}= \underset{k-1}{\underbrace{ 1\otimes \dots \otimes 1}}\otimes E_{ab} \otimes \underset{N-k}{\underbrace{ 1\otimes \dots \otimes 1}}$ for $1\leq k\leq N$.

The following is a standard extension of the usual $\gl(m)$ result. We sketch the proof for the reader's convenience.
\begin{prop} The Gaudin Hamiltonians $\HH_i$:\label{Hcomprop}
\begin{enumerate}
\item are mutually commuting: $[\HH_i,\HH_j] = 0$ for all $i,j$;
\item commute with the action of $\gl(m|n)$: $[\HH_i, X]=0$ for all $i$ and all $X\in \gl(m|n)$; 
\item are symmetric operators with respect to the tensor Shapovalov form: $\left< \HH_i v,w\right> = \left< v,\HH_i w \right>$.
\item sum to zero: $\sum_{i=1}^{N}\HH_i = 0$. 
\end{enumerate}
\end{prop}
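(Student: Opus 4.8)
The plan is to reduce all four statements to two structural facts about the ``polarised Casimir'' tensor. For $i\neq j$ write
\be \Omega^{(ij)} := \sum_{a,b} E_{ab}^{(i)} E_{ba}^{(j)} (-1)^{\pr b}, \nn\ee
so that $\HH_i = \sum_{j\neq i} \Omega^{(ij)}/(z_i-z_j)$. The two facts I would establish first are: (a) symmetry, $\Omega^{(ij)}=\Omega^{(ji)}$; and (b) invariance, $[\Omega^{(ij)}, X^{(i)}+X^{(j)}]=0$ for every $X\in\gl(m|n)$. For (a), I would use that operators on distinct tensor factors supercommute, so $E_{ab}^{(j)}E_{ba}^{(i)} = (-1)^{(\pr a+\pr b)^2} E_{ba}^{(i)}E_{ab}^{(j)}$; after reindexing $a\leftrightarrow b$ the graded signs collapse to give $\Omega^{(ji)}=\Omega^{(ij)}$. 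Given (a), statement (4) is immediate: pairing $(i,j)$ with $(j,i)$ in $\sum_i\HH_i = \sum_{i\neq j}\Omega^{(ij)}/(z_i-z_j)$, the two contributions cancel since $1/(z_i-z_j)+1/(z_j-z_i)=0$.

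For (b) I would compute the coproduct of the central element $\mc C$ and find $\Delta\mc C = \mc C\otimes 1 + 1\otimes \mc C + 2\Omega$, where $\Omega = \sum_{a,b}(-1)^{\pr b}E_{ab}\otimes E_{ba}$. Since $\mc C$ is central and $\Delta$ is an algebra homomorphism, $\Delta X$ commutes with $\Delta\mc C$ and separately with each of $\mc C\otimes 1$ and $1\otimes\mc C$; subtracting gives $[\Delta X, \Omega]=0$, which is (b). Statement (2) then follows at once: writing the $\gl(m|n)$-action on the $N$-fold product as $\sum_k X^{(k)}$, each $\Omega^{(ij)}$ commutes with $X^{(k)}$ for $k\notin\{i,j\}$ (disjoint factors, and $\Omega$ even) and with $X^{(i)}+X^{(j)}$ by (b), so $[\HH_i, \sum_k X^{(k)}]=0$ for all $X$.

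For the mutual commutativity (1) I would expand $[\HH_i,\HH_j]$ as a double sum of brackets $[\Omega^{(ik)},\Omega^{(jl)}]$ and invoke two auxiliary relations: the disjointness relation $[\Omega^{(ik)},\Omega^{(jl)}]=0$ when $\{i,k\}\cap\{j,l\}=\emptyset$ (immediate, since $\Omega$ is even and acts on disjoint factors), and the three-point relation $[\Omega^{(ij)},\Omega^{(ik)}+\Omega^{(jk)}]=0$ (which is (b) applied with the first leg sitting diagonally on factors $i$ and $j$). These reduce the double sum to a single sum over the ``third point'' $k\notin\{i,j\}$ of three commutators; writing $\omega_{ij}:=\Omega^{(ij)}$, the three-point relation identifies them up to sign as $[\omega_{ij},\omega_{jk}]=[\omega_{ik},\omega_{ij}]=-[\omega_{ik},\omega_{jk}]$, and the common scalar coefficient
\be \frac{1}{(z_i-z_j)(z_j-z_k)} + \frac{1}{(z_i-z_k)(z_j-z_i)} - \frac{1}{(z_i-z_k)(z_j-z_k)} \nn\ee
vanishes identically by partial fractions. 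Hence $[\HH_i,\HH_j]=0$.

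Finally, for (3) I would use the factor-wise flip $\langle E_{ab}^{(k)}v,w\rangle = \langle v, E_{ba}^{(k)}w\rangle$, which follows from the single-factor identity \eqref{Sflip} together with the evenness of the form (so that the graded signs incurred in passing $E_{ab}$ across the earlier factors agree on both sides). Applying this flip successively in factors $i$ and $j$ gives $\langle E_{ab}^{(i)}E_{ba}^{(j)}v,w\rangle = \langle v, E_{ab}^{(j)}E_{ba}^{(i)}w\rangle$, whence $\langle \Omega^{(ij)}v,w\rangle = \langle v, \Omega^{(ji)}w\rangle = \langle v,\Omega^{(ij)}w\rangle$ by (a); summing over $j$ yields the symmetry of $\HH_i$. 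The only genuine obstacle throughout is the bookkeeping of the $\Z_2$-graded signs --- in particular verifying that they cancel in (a) and in the two sequential flips of (3) --- while the remaining algebra (the coproduct computation and the partial-fraction identity) is routine and formally identical to the even $\gl(m)$ case.
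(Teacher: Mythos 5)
Your proposal is correct and follows essentially the same route as the paper: part (4) from the symmetry $\Omega^{(ij)}=\Omega^{(ji)}$ of the polarised Casimir, part (2) from the identity relating $\Omega$ to $\Delta\mc C - \mc C\otimes 1 - 1\otimes\mc C$, part (3) from the single-factor Shapovalov flip combined with evenness of the form to absorb the graded signs, and part (1) from the partial-fraction identity in the $z_i$. The only difference is presentational: where the paper dismisses (1) as a ``direct calculation,'' you spell out the standard reduction via the invariance $[\Omega^{(ij)},X^{(i)}+X^{(j)}]=0$ to the three-point relation, which is a welcome elaboration but not a different argument.
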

\begin{proof} Part (1) is verified by direct calculation, making use of the identity
\be \frac 1{z_i-z_j} \frac 1{z_j-z_k} + \frac 1{z_j-z_k} \frac 1{z_k-z_i} + \frac 1{z_k-z_i} \frac 1{z_i-z_j} =0.\nn\ee
For part (2), we recall the quadratic Casimir $\mc C$ from \S\ref{casimir}, and observe that 
\be \sum_{a,b} E_{ab}\otimes E_{ba}(-1)^{\pr b} = \Delta \mc C - \mc C \otimes 1 - 1\otimes \mc C\label{Ceqn}.\ee
For part (3), one has
\begin{align*} &\left< \left(E_{ab}\otimes E_{ba} (-1)^{\pr b}\right)(v_1\otimes v_2), w_1\otimes w_2 \right>\\
& = \left< E_{ab} v_1 \otimes E_{ba}v_2  (-1)^{\pr b+ \pr{v_1}(\pr a+ \pr b)}, w_1\otimes w_2 \right>\\
&= \left< E_{ab} v_1,w_1\right> \left< E_{ba} v_2,w_2 \right>  (-1)^{\pr b+ \pr{v_1}(\pr a+ \pr b)} \\
&= \left< v_1,E_{ba} w_1\right> \left< v_2,E_{ab} w_2 \right>  (-1)^{\pr b+ \pr{v_1}(\pr a+ \pr b)} \\
& = \left< v_1 \otimes v_2  , (E_{ba}\otimes E_{ab})(w_1\otimes w_2) \right> (-1)^{\pr b+ (\pr{v_1}+\pr{w_1})(\pr a+ \pr b)}\\
& = \left< v_1 \otimes v_2  , \left(E_{ba}\otimes E_{ab}(-1)^{\pr a}\right)(w_1\otimes w_2) \right> (-1)^{(1+\pr{v_1}+\pr{w_1})(\pr a+ \pr b)}.
\end{align*}
Since the Shapovalov form is even, here $\left< v_1,E_{ba} w_1\right>$ can be nonzero only if $E_{ab}$ is even \emph{or} the parities of $v_1$ and $w_1$ are distinct: that is, only if $(-1)^{(1+\pr{v_1}+\pr{w_1})(\pr a+ \pr b)}=1$. Therefore it follows from the above that $\left< \left(E_{ab}\otimes E_{ba} (-1)^{\pr b}\right)(v_1\otimes v_2), w_1\otimes w_2 \right> =  \left< v_1 \otimes v_2  , \left(E_{ba}\otimes E_{ab}(-1)^{\pr a}\right)(w_1\otimes w_2) \right>$, and hence the result. 

Part (4) follows from the fact that $\sum_{a,b} E_{ab}^{(i)} E_{ba}^{(j)} (-1)^{\pr b}$ is symmetric in its tensor factors, i.e. $$\sum_{a,b} E_{ab}^{(i)} E_{ba}^{(j)} (-1)^{\pr b} = \sum_{a,b} E_{ba}^{(j)} E_{ab}^{(i)}  (-1)^{\pr b}(-1)^{(\pr a + \pr b)^2} = \sum_{a,b} E_{ba}^{(j)} E_{ab}^{(i)}  (-1)^{\pr a} =\sum_{a,b} E_{ab}^{(j)} E_{ba}^{(i)} (-1)^{\pr b}.$$
\end{proof}

In particular, notice that if all $z_i$ are real numbers and all modules $L(\lambda^{(i)})$ are polynomial then the Gaudin Hamiltonians are Hermitian operators with respect to the Hermitian form corresponding to the Shapovalov form. In particular, the Gaudin Hamiltonians are diagonalizable in this case.

\section{Bethe ansatz}\label{BA sec}

\subsection{Bethe equations}
As in the previous section, we fix a collection $\bm{\lambda} = (\lambda^{(i)})_{i = 1}^N$ of weights $\lambda^{(i)} \in \h^{\ast}$ together with a collection $\bm{z} = (z_i)_{i = 1}^N$ of pairwise distinct points $z_i \in \C$. Let now 
$\bm{l} = (l_i)_{i = 1}^{n+m-1}$  be a collection of $n+m-1$ non-negative integers. Set
\begin{align*} 
\lambda := \sum_{s=1}^N \lambda^{(s)} ,\qquad 
l := \sum_{p=1}^{m+n-1} l_p ,\qquad 
\alpha(\bm{l}) := \sum_{p=1}^{m+n-1} l_p \alpha_p, \qquad \lambda^{\infty} := \lambda - \alpha(\bm{l}).
\end{align*}
Let $c$ be any function from $\{ 1, \ldots, l \}$ to $\{ 1, \ldots, n+m-1 \}$ such that the set $c^{-1}(i)$ has $l_i$ elements, for all $1\leq i\leq  n+m-1$. For example, one can make the choice
\be \big(c(1),c(2),\dots,c(l)\big) = \big(\underset{l_1}{\underbrace{1,\dots,1}},
\underset{l_2}{\underbrace{2,\dots,2}},\dots,
\underset{l_{n+m-1}}{\underbrace{n+m-1,\dots,n+m-1}}\big)\label{lidef}.\ee  

The \emph{Bethe equations} are the following system of algebraic equations on a collection $\bm{t} = (t_i)_{i = 1}^l$ of variables $t_i \in \C$:
\begin{equation} \label{BAE}
- \sum_{s = 1}^N \frac{\big(\alpha_{c(i)}, \lambda^{(s)} \big)}{t_i - z_s} + \sum_{\substack{j=1\\ j \neq i}}^{l} \frac{\big(\alpha_{c(i)}, \alpha_{c(j)}\big)}{t_i - t_j} = 0, \qquad i = 1, \ldots, l.
\end{equation}
We say $c(i) \in \{1,\dots,n+m-1\}$ is the \emph{colour} of the Bethe variable $t_i$.

\subsection{Weight function}
For each $i$, fix a highest weight vector $v_{\lambda^{(i)}}$ in the highest weight irreducible $\gl(m|n)$-module $L(\lambda^{(i)})$. Consider the tensor product $L(\bm{\lambda}) = L(\lambda^{(1)}) \otimes \ldots \otimes L(\lambda^{(N)})$. Given $\mu \in \h^{\ast}$ we denote by $L(\bm{\lambda})_{\mu}$ the weight space of weight $\mu$. The \emph{weight function} is a vector $w(\bs z,\bs t)$ in $L(\bm{\lambda})_{\lambda^{\infty}}$ depending on parameters $\bs z=(z_1,\dots,z_N)$ and variables $\bs t=(t_1,\dots,t_l)$ constructed as follows.

Let an \emph{ordered partition} of $\{1,\dots,l\}$ into $N$ parts be a composition  $p_1+p_2+\dots + p_N=l$, $(p_1,\dots,p_N)\in \Z_{\geq 0}^N$, of $l$ into $N$ parts, together with a tuple $\bm n = (n^1_1, \ldots, n^1_{p_1}; \ldots ; n^N_1, \ldots, n^N_{p_N})$ which is a permutation of $(1,2,\dots,l)$.  Let $P_{l, N}$ be the set of all such ordered partitions. We call $\bm p=(p_1,\dots,p_N)$ the \emph{shape} of the ordered partition, and will often leave it implicit.

To every ordered partition ${\bm n} \in P_{l, N}$, associate a vector $F_{\bm n} v$ in $L(\bm{\lambda})_{\lambda^{\infty}}$ and a rational function $\omega_{\bm n}$ of $\bm{z}$ and $\bm{t}$ defined as
\begin{align*}
F_{\bm n} v &:= F_{c(n^1_1)} \ldots F_{c(n^1_{p_1})} v_{\lambda^{(1)}} \otimes \ldots \otimes F_{c(n^N_1)} \ldots F_{c(n^N_{p_N})} v_{\lambda^{(N)}},\\
\omega_{\bm n}(\bm{z}, \bm{t}) &:= \omega_{n^1_1, \ldots, n^1_{p_1}}(z_1, \bm{t}) \ldots \omega_{n^N_1, \ldots, n^N_{p_N}}(z_N, \bm{t})
\end{align*}
where for any $\{ n_1, \ldots, n_j \} \subset \{ 1, \ldots, l \}$, all distinct, we write
\begin{equation*}
\omega_{n_1, \ldots, n_j}(z_s, \bm{t}) := \frac{1}{(t_{n_1} - t_{n_2}) \ldots (t_{n_{j-1}} - t_{n_j}) (t_{n_j} - z_s)}.
\end{equation*}
Furthermore, there is also a sign associated to each ${\bm n} \in P_{l, N}$, denoted $(-1)^{|{\bm n}|}$, defined as follows. Referring to a transposition $(i j) \in S_l$ as being odd if and only if $|F_{c(i)}|=|F_{c(j)}|=1$, then $|{\bm n}|$ counts modulo 2 the total number of odd transpositions in the permutation $\sigma$ which sends $(1, \ldots, l)$ to $(n^1_1, \ldots, n^1_{p_1}, \ldots, n^N_1, \ldots, n^N_{p_N})$. In terms of $\sigma$, this sign may be expressed as
\begin{equation*}
(-1)^{|{\bm n}|} := \prod_{i = 1}^l \prod_{\substack{j > i\\ \sigma(j) < \sigma(i)}} (-1)^{|F_{c(i)}| |F_{c(j)}|}.
\end{equation*}

In terms of these notations, the weight function is defined as
\begin{equation} \label{weight function}
w(\bm{z}, \bm{t}) := \sum_{{\bm n} \in P_{l, N}} m_{\bm n},\quad m_{\bm n} := (-1)^{|{\bm n}|} \omega_{\bm n}(\bm{z}, \bm{t}) F_{\bm n} v.
\end{equation}

If a permutation $\bm n_1\in S_l$ permutes variables of the same parity, then we have 
$(-1)^{|{\bm n}_1{\bm n}_2|}=(-1)^{|{\bm n}_1|}(-1)^{|{\bm n}_2|}$. It follows that
the weight function is symmetric (resp. skew-symmetric) under the interchange of any pair of variables $t_i$, $t_j$ such that $c(i) = c(j)$ with $\pr{F_{c(i)}} =0$ (resp. $\pr{F_{c(i)}} = 1$). 
\begin{exmp}\label{exmpgl21}
Consider $\gl(2|1)$ with the choice of Dynkin diagram \begin{tikzpicture}[baseline =-3,scale=.6] 
\draw[thick] (1,0) -- (2,0);
\foreach \x in {1,2}
\filldraw[fill=white] (\x,0) circle (2mm);
\draw[thick] (2,0)++(-.15,-.15) -- ++(.3,.3);\draw[thick] (2,0)++(.15,-.15) -- ++(-.3,.3);
\draw[thick] (1,0)++(-.15,-.15) -- ++(.3,.3);\draw[thick] (1,0)++(.15,-.15) -- ++(-.3,.3);
\end{tikzpicture}, i.e. both $F_1=E_{21}$ and $F_2=E_{32}$ are odd. If, for example, $\bm{l} = (1,1)$ then
\begin{align*}
w(\bm{z}, \bm{t}) &= \frac{F_1 F_2 v_{\lambda^{(1)}} \otimes v_{\lambda^{(2)}}}{(t_1 - t_2)(t_2 - z_1)}
- \frac{F_2 F_1 v_{\lambda^{(1)}} \otimes v_{\lambda^{(2)}}}{(t_2 - t_1)(t_1 - z_1)}
+ \frac{F_1 v_{\lambda^{(1)}} \otimes F_2 v_{\lambda^{(2)}}}{(t_1 - z_1)(t_2 - z_2)}\\
&- \frac{F_2 v_{\lambda^{(1)}} \otimes F_1 v_{\lambda^{(2)}}}{(t_2 - z_1)(t_1 - z_2)}
+ \frac{v_{\lambda^{(1)}} \otimes F_1 F_2 v_{\lambda^{(2)}}}{(t_1 - t_2)(t_2 - z_2)}
- \frac{v_{\lambda^{(1)}} \otimes F_2 F_1 v_{\lambda^{(2)}}}{(t_2 - t_1)(t_1 - z_2)},
\end{align*}
while if $\bm{l} = (0, 2)$ we find (note that $F_2^2=0$ identically)
\begin{align*}
w(\bm{z}, \bm{t}) &= \frac{F_2 v_{\lambda^{(1)}} \otimes F_2 v_{\lambda^{(2)}}}{(t_1 - z_1)(t_2 - z_2)} - \frac{F_2 v_{\lambda^{(1)}} \otimes F_2 v_{\lambda^{(2)}}}{(t_2 - z_1)(t_1 - z_2)}\\
 &=  -\frac{(t_1-t_2)(z_1-z_2)}{(t_1-z_1)(t_1-z_2)(t_2-z_1)(t_2-z_2)} F_2 v_{\lambda^{(1)}} \otimes F_2 v_{\lambda^{(2)}}
.
\end{align*}
\end{exmp}

Each term in the sum \eqref{weight function} may be conveniently visualized diagrammatically as follows. We mark the set of all points $z_i \in \C$ and $t_j \in \C$ on which the weight function depends using crosses and dots respectively. There is then a 1-1 correspondence between each term in the sum \eqref{weight function} and every possible graph consisting of $N$ chains each emanating from one of the $z_i$'s and such that each $t_j$ belongs to one chain exactly.
\begin{exmp}\label{graphexmp}
Consider the case $N = 3$ and $l = 7$. We sketch the location of each point $(z_i)_{i=1}^3$ using crosses and those of the points $(t_j)_{j=1}^7$ using dots.
\begin{equation*}
\begin{tikzpicture}[baseline =-5,scale=.6]
\draw[thick, black] (-.15, -.15) -- (.15, .15)
			    (-.15, .15) -- (.15, -.15) node[above left=1mm] {$z_1$};
\filldraw [black] (-.3,-1.5) circle (4pt) node[right=.5mm] {$t_1$}
		      (.1,-2.9) circle (4pt) node[right=.5mm] {$t_4$}
      		      (-1,-3.9) circle (4pt) node[below right=-.5mm] {$t_7$};

\draw[thick, black] ( 2 - .15, - .15) -- ( 2 + .15, .15)
			    ( 2 - .15, .15) -- ( 2 + .15, -.15) node[above=2mm] {$z_2$};
\filldraw [black] (2.5,-2) circle (4pt) node[left=1mm] {$t_2$}
		      (1.5,-3.3) circle (4pt) node[below right=-.5mm] {$t_5$};

\draw[thick, black] ( 4 - .15, - .15) -- ( 4 + .15, .15)
			    ( 4 - .15, .15) -- ( 4 + .15, -.15) node[above right] {$z_3$};
\filldraw [black] (3.8,-1.4) circle (4pt) node[right=.5mm] {$t_3$}
		      (4.5,-4) circle (4pt) node[left=.5mm] {$t_6$};
\end{tikzpicture}
\end{equation*}
Let us suppose for concreteness that the odd lowering operators correspond to the Bethe roots $t_1$, $t_5$ and $t_6$. That is, $|F_{c(i)}| = 1$ if $i = 1, 5, 6$ and $|F_{c(i)}| = 0$ otherwise. Each way of joining all the $t_j$'s along chains to one of the $z_i$'s then corresponds uniquely to a term in the sum \eqref{weight function}. Two possible such terms and their corresponding graphs are
\begin{equation*}
\begin{tabular}{ccc}
\begin{tikzpicture}[baseline =-5,scale=.6]
\draw[thin, gray] (0,0) -- (-.3,-1.5) -- (.1, -2.9) -- (-1,-3.9);
\draw[thick, black] (-.15, -.15) -- (.15, .15)
			    (-.15, .15) -- (.15, -.15) node[above left=1mm] {$z_1$};
\filldraw [black] (-.3,-1.5) circle (4pt) node[right=.5mm] {$t_1$};
\filldraw [black] (.1,-2.9) circle (4pt) node[right=.5mm] {$t_4$}
      		      (-1,-3.9) circle (4pt) node[below right=-.5mm] {$t_7$};

\draw[thin, gray] (2,0) -- (2.5,-2) -- (1.5, -3.3);
\draw[thick, black] ( 2 - .15, - .15) -- ( 2 + .15, .15)
			    ( 2 - .15, .15) -- ( 2 + .15, -.15) node[above=2mm] {$z_2$};
\filldraw [black] (2.5,-2) circle (4pt) node[left=1mm] {$t_2$};
\filldraw [black] (1.5,-3.3) circle (4pt) node[below right=-.5mm] {$t_5$};

\draw[thin, gray] (4,0) -- (3.8,-1.4) -- (4.5, -4);
\draw[thick, black] ( 4 - .15, - .15) -- ( 4 + .15, .15)
			    ( 4 - .15, .15) -- ( 4 + .15, -.15) node[above right] {$z_3$};
\filldraw [black] (3.8,-1.4) circle (4pt) node[right=.5mm] {$t_3$}
		      (4.5,-4) circle (4pt) node[left=.5mm] {$t_6$};
\end{tikzpicture}
& \qquad\qquad & \raisebox{-9mm}{$\displaystyle \frac{F_{c(7)} F_{c(4)} F_{c(1)} v_{\lambda^{(1)}} \otimes F_{c(5)} F_{c(2)} v_{\lambda^{(2)}} \otimes F_{c(6)} F_{c(3)} v_{\lambda^{(3)}}}{(t_7 - t_4)(t_4 - t_1)(t_1 - z_1)(t_5 - t_2)(t_2 - z_2)(t_6 - t_3)(t_3 - z_3)}$}\\
\\
\begin{tikzpicture}[baseline =-5,scale=.6]
\draw[thin, gray] (0,0) -- (-.3,-1.5) -- (1.5, -3.3) -- (.1, -2.9) -- (-1,-3.9);
\draw[thick, black] (-.15, -.15) -- (.15, .15)
			    (-.15, .15) -- (.15, -.15) node[above left=1mm] {$z_1$};
\filldraw [black] (-.3,-1.5) circle (4pt) node[right=.5mm] {$t_1$}
		        (-1,-3.9) circle (4pt) node[below=.5mm] {$t_7$};
\filldraw [black] (.1,-2.9) circle (4pt) node[below=.5mm] {$t_4$};

\draw[thin, gray] (2,0) -- (2.5,-2) -- (4.5, -4);
\draw[thick, black] ( 2 - .15, - .15) -- ( 2 + .15, .15)
			    ( 2 - .15, .15) -- ( 2 + .15, -.15) node[above=2mm] {$z_2$};
\filldraw [black] (2.5,-2) circle (4pt) node[left=1mm] {$t_2$}
		      (1.5,-3.3) circle (4pt) node[right=.5mm] {$t_5$};

\draw[thin, gray] (4,0) -- (3.8,-1.4);
\draw[thick, black] ( 4 - .15, - .15) -- ( 4 + .15, .15)
			    ( 4 - .15, .15) -- ( 4 + .15, -.15) node[above right] {$z_3$};
\filldraw [black] (3.8,-1.4) circle (4pt) node[right=.5mm] {$t_3$}
		      (4.5,-4) circle (4pt) node[right=.5mm] {$t_6$};
\end{tikzpicture}
& \qquad\qquad & \raisebox{-9mm}{$\displaystyle - \frac{F_{c(7)} F_{c(4)} F_{c(5)} F_{c(1)} v_{\lambda^{(1)}} \otimes F_{c(6)} F_{c(2)} v_{\lambda^{(2)}} \otimes F_{c(3)} v_{\lambda^{(3)}}}{(t_7 - t_4)(t_4 - t_5)(t_5 - t_1)(t_1 - z_1)(t_6 - t_2)(t_2 - z_2)(t_3 - z_3)}$}
\end{tabular}
\end{equation*}
The sign in the first term is positive because the lowering operators $F_{c(1)}$, $F_{c(5)}$ and $F_{c(6)}$ appearing in the numerator are ordered in the same way as they appear in the sequence of roots $(t_1, \ldots, t_5, t_6)$. By contrast, the second term depicted above has a minus sign because the order of the operators $F_{c(1)}$ and $F_{c(5)}$ is flipped.
\end{exmp}

\begin{thm}\label{singthm}
The vector $w(\bm{z}, \bm{t})$ belongs to $L(\bm{\lambda})^{\textup{sing}}_{\lambda^{\infty}}$ if the Bethe equations \eqref{BAE} hold.
\begin{proof}
Given $a \in \{ 1, \ldots, n+m-1\}$ we want to show that $E_a w(\bm{z}, \bm{t}) = 0$. 
The strategy of proof is as follows. We first express $E_{a}w(\bm z,\bm t)$ in the form 
\begin{equation}
E_a w(\bm{z}, \bm{t}) = \sum_{\substack{i = 1\\ c(i) = a}}^l \sum_{\bm{n} \in P^i_{l, N}} \kappa_{\bm n, i}(\bm{z}, \bm{t}) (-1)^{|\bm n|} \omega_{\bm n}(\bm{z}, \bm{t}) F_{\bm n} v\label{kappaeqn}
\end{equation}
for certain\footnote{Note that \eqref{kappaeqn} does not by itself uniquely fix these coefficients $\kappa_{\bm n, i}(\bm{z}, \bm{t})$ in general, since the $F_{\bm n} v$ are not in general a linearly independent set of vectors. What matters in the following is that we have a well-defined prescription for computing the $\kappa_{\bm n, i}(\bm{z}, \bm{t})$ such that \eqref{kappaeqn} holds.} coefficients $\kappa_{\bm n, i}(\bm z,\bm t)$, where $P^i_{l, N}$ denotes the set of all ordered partitions of $\{ 1, \ldots, l \} \setminus \{ i \}$ into $N$ parts. We then show that these coefficients are actually zero if the Bethe equations hold. 

To see that $E_aw(\bm z,\bm t)$ can be written in the form \eqref{kappaeqn}, consider the action of $E_a$ on one of the monomials $m_{\bm n}(\bm{z}, \bm{t}) = (-1)^{|{\bm n}|} \omega_{\bm n}(\bm{z}, \bm{t}) F_{\bm n} v$, ${\bm n} \in P_{l ,N}$ appearing in the sum \eqref{weight function}. Since $[E_a, F_b]$ is zero whenever $b\neq a$, we can express $E_a m_{\bm n}(\bm{z}, \bm{t})$ as a sum of the contributions coming from commuting $E_a$ past each of the factors $F_a$ (if any) that appear in $m_{\bm n}$, i.e. a sum over all $i\in\{1,\dots,l\}$ such that $c(i)=a$. Since $[E_a,F_a]=H_a$ and $[H_a,F_b] \propto F_b$, the contribution for a given such $i$ will be proportional to $F_{{\bm n} \setminus \{ i \}} v$, where ${\bm n} \setminus \{ i \}$ is the ordered partition of $\{ 1, \ldots, l \} \setminus \{ i \}$ into $N$ parts obtained from ${\bm n}$ by removing the element $i$.

Now pick and fix a term in \eqref{kappaeqn}, namely choose an $i \in \{ 1, \ldots, l \}$ such that $c(i) = a$ and an ordered partition $\bm n = (n^1_1, \ldots, n^1_{p_1}; \ldots; n^N_1, \ldots, n^N_{p_N})$ of $\{ 1, \ldots, l\} \setminus \{ i \}$ into $N$ parts, so that in particular $p_1 + \ldots + p_N = l - 1$. 
In order to compute $\kappa_{\bm n, i}(\bm{z}, \bm{t})$, consider all the monomials $F_{{\bm m}} v$ with ${\bm m} \in P_{l, N}$ from which $F_{\bm n} v$ can arise under the action of $E_a$. The corresponding ${\bm m} \in P_{l, N}$ are all those obtained by inserting the element $i$ somewhere along the ordered partition $\bm n$. 

Let 
$$\epsilon_i=\prod_{j = 1}^{i-1} (-1)^{|F_{c(j)}| |F_a|}.$$

Let ${\bm n}_0 = (i, n^1_1, \ldots, n^1_{p_1}; \ldots; n^N_1, \ldots, n^N_{p_N}) \in P_{l, N}$ be obtained from insertion of $i$ into the first position of $\bm n$.  We have $(-1)^{|{\bm n}_0|} = (-1)^{|\bm n|} \epsilon_i$

Note that for any ${\bm m} \in P_{l, N}$ obtained from $\bm n$, the sign picked up by bringing $E_a$ through all the generators in front of $F_{c(i)}$ in $F_{\bm n} v$ is equal to $(-1)^{|{\bm n}_0| + |{\bm m}|}$.

Now let $k \in \{ 1, \ldots, N \}$ and suppose ${\bm m} \in P_{l, N}$ is such that $i$ has been inserted in the $k^{\rm th}$ chain of $\bm n$, namely in $(n^k_1, \ldots, n^k_{p_k})$. Then there are three cases to consider: when $i$ is inserted at the end, somewhere in the middle, or at the start of this chain. 
In the first case, the contribution to $\kappa_{\bm n, i}(\bm{z}, \bm{t})$ is given by
\begin{equation*}
\begin{tikzpicture}[baseline =-5,scale=.6]
\draw[thin, gray, dashed] (-2.5,0) -- (-2.5,-2);
\draw[thick, black] (-2.5 - .15, - .15) -- ( -2.5 + .15, .15)
			    (-2.5 - .15, .15) -- ( -2.5 + .15, -.15) node[above=2mm] {$z_k$};
\filldraw [black] (-2.5,-2) circle (4pt) node[left=1mm] {$t_{n^k_1}$};

\draw [<-,very thick] (-1.2,-1.5) -- node[above=1mm]{$E_a$} (0.2,-1.5);

\draw[thin, gray, dashed] (1.5,0) -- (1.5,-2);
\draw[thin, gray] (1.5,-2) -- (2.5, -3.3);
\draw[thick, black] ( 1.5 - .15, - .15) -- ( 1.5 + .15, .15)
			    ( 1.5 - .15, .15) -- ( 1.5 + .15, -.15) node[above=2mm] {$z_k$};
\filldraw [black] (1.5,-2) circle (4pt) node[left=1mm] {$t_{n^k_1}$}
		      (2.5,-3.3) circle (4pt) node[right=.5mm] {$t_i$};
\end{tikzpicture}
\qquad \qquad \qquad
\raisebox{-6mm}{$\displaystyle \epsilon_i \frac{1}{t_i - t_{n^k_1}} \bigg( \lambda^{(k)}_a - \sum_{r = 1}^{p_k} \big( \alpha_{c(n^k_r)}, \alpha^{\vee}_a \big) \bigg),$}
\end{equation*}
where we have used the relations $[H_a, F_b] = - (\alpha_b, \alpha^{\vee}_a) F_b$. Here we have introduced $\lambda^{(k)}_a = (\lambda^{(k)}, \alpha^{\vee}_a)$. The diagram on the left depicts the relevant action of $E_a$ on the $k^{\rm th}$ chain in ${\bm m}$. Similarly, if the element $i$ is inserted somewhere in the middle of the $k^{\rm th}$ chain of ${\bm n}$ then the corresponding coefficient contributing to $\kappa_{\bm n, i}(\bm{z}, \bm{t})$ is given by
\begin{equation*}
\begin{tikzpicture}[baseline =-30,scale=.6]
\draw[thin, gray, dashed] (-2.5,0) -- (-2.5,-2);
\draw[thin, gray] (-2.5,-2) -- (-2.5,-4);
\draw[thin, gray, dashed] (-2.5,-4) -- (-2.5,-5);
\draw[thick, black] (-2.5 - .15, - .15) -- ( -2.5 + .15, .15)
			    (-2.5 - .15, .15) -- ( -2.5 + .15, -.15) node[above=2mm] {$z_k$};
\filldraw [black] (-2.5,-2) circle (4pt) node[left=1mm] {$t_{n^k_{p+1}}$};
\filldraw [black] (-2.5,-4) circle (4pt) node[left=1mm] {$t_{n^k_p}$};

\draw [<-,very thick] (-1.2,-2) -- node[above=1mm]{$E_a$} (0.2,-2);

\draw[thin, gray, dashed] (1.5,0) -- (1.5,-2);
\draw[thin, gray] (1.5,-2) -- (2.7, -3);
\draw[thin, gray] (2.7,-3) -- (1.5, -4);
\draw[thin, gray, dashed] (1.5,-4) -- (1.5,-5);
\draw[thick, black] ( 1.5 - .15, - .15) -- ( 1.5 + .15, .15)
			    ( 1.5 - .15, .15) -- ( 1.5 + .15, -.15) node[above=2mm] {$z_k$};
\filldraw [black] (1.5,-2) circle (4pt) node[below left=-1.5mm] {$t_{n^k_{p+1}}$}
		        (2.7,-3) circle (4pt) node[right=.5mm] {$t_i$}
		        (1.5,-4) circle (4pt) node[left=1mm] {$t_{n^k_p}$};
\end{tikzpicture}\hspace{-1.5cm}
\begin{split} \epsilon_i \frac{t_{n^k_p}-t_{n^k_{p+1}}}{(t_{n^k_p}-t_i)(t_i-t_{n^k_{p+1}})} \bigg( \lambda^{(k)}_a - \sum_{r = p+1}^{p_k} \big( \alpha_{c(n^k_r)}, \alpha^{\vee}_a \big) \bigg) \\ = \epsilon_i \left(\frac{1}{t_{n^k_p} - t_i} + \frac{1}{t_i - t_{n^k_{p+1}}} \right)\bigg( \lambda^{(k)}_a - \sum_{r = p+1}^{p_k} \big( \alpha_{c(n^k_r)}, \alpha^{\vee}_a \big) \bigg).\end{split}
%
\end{equation*}
Finally, if $i$ was inserted at the very start of the $k^{\rm th}$ chain in $\bm n$ then the corresponding contribution to $\kappa_{\bm n, i}(\bm{z}, \bm{t})$ reads
\begin{equation*}
\begin{tikzpicture}[baseline =-25,scale=.6]
\draw[thin, gray] (-2.5,-2) -- (-2.5,-4);
\draw[thin, gray, dashed] (-2.5,-4) -- (-2.5,-5);
\draw[thick, black] (-2.5 - .15, -2 - .15) -- ( -2.5 + .15, -2 + .15)
			    (-2.5 - .15, -2  + .15) -- ( -2.5 + .15, -2 -.15) node[above=2mm] {$z_k$};
\filldraw [black] (-2.5,-4) circle (4pt) node[left=1mm] {$t_{n_{p_k}^k}$};

\draw [<-,very thick] (-1.2,-3.2) -- node[above=1mm]{$E_a$} (0.2,-3.2);

\draw[thin, gray] (1.5,-2) -- (2.7, -3);
\draw[thin, gray] (2.7,-3) -- (1.5, -4);
\draw[thin, gray, dashed] (1.5,-4) -- (1.5,-5);
\draw[thick, black] ( 1.5 - .15, - 2 - .15) -- ( 1.5 + .15, - 2 + .15)
			    ( 1.5 - .15, - 2 + .15) -- ( 1.5 + .15, - 2 -.15) node[above=2mm] {$z_k$};
\filldraw [black] (2.7,-3) circle (4pt) node[right=.5mm] {$t_i$}
		        (1.5,-4) circle (4pt) node[left=1mm] {$t_{n_{p_k}^k}$};
\end{tikzpicture}
\qquad \qquad \qquad
\raisebox{-10mm}{$\displaystyle \epsilon_i \bigg( \frac{1}{t_{n_{p_k}^k} - t_i} + \frac{1}{t_i - z_k} \bigg) \lambda^{(k)}_a.$}
\end{equation*}
Summing up these three contributions to $\kappa_{\bm n, i}(\bm{z}, \bm{t})$ from each $k \in \{ 1, \ldots, N \}$ we obtain
\begin{multline*}
\kappa_{\bm n, i}(\bm{z}, \bm{t}) = \epsilon_i \sum_{k = 1}^N \Bigg( \frac{1}{t_i - t_{n^k_1}} \bigg( \lambda^{(k)}_a - \sum_{r = 1}^{p_k} \big( \alpha_{c(n^k_r)}, \alpha^{\vee}_a \big) \bigg)\\
+ \sum_{p = 1}^{p_k - 1} \bigg(\frac{1}{t_{n^k_p} - t_i} + \frac{1}{t_i - t_{n^k_{p+1}}} \bigg) \bigg( \lambda^{(k)}_a - \sum_{r = p+1}^{p_k} \big( \alpha_{c(n^k_r)}, \alpha^{\vee}_a \big) \bigg)\\
+ \bigg( \frac{1}{t_{n^k_{p_k}} - t_i} + \frac{1}{t_i - z_k} \bigg) \lambda^{(k)}_a \Bigg).
\end{multline*}
Many terms in this sum cancel and after a short calculation we arrive at
\begin{equation*}
\kappa_{\bm n, i}(\bm{z}, \bm{t}) = \epsilon_i \sum_{k = 1}^N \Bigg( - \sum_{p = 1}^{p_k} \frac{\big( \alpha_{c(n_p^{k})}, \alpha^{\vee}_a \big)}{t_i - t_{n^k_p}} + \frac{(\lambda^{(k)}, \alpha^{\vee}_a)}{t_i - z_k} \Bigg).
\end{equation*}
But this last expression coincides exactly with the left hand side of the Bethe equations \eqref{BAE} and therefore vanishes when \eqref{BAE} hold, as required.
\end{proof}
\end{thm}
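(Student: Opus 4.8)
The plan is to verify directly that $E_a\, w(\bm z,\bm t)=0$ for every simple index $a\in\{1,\dots,n+m-1\}$. Since each monomial $F_{\bm n}v$ manifestly has weight $\lambda^{\infty}$, the vector $w(\bm z,\bm t)$ already lies in $L(\bm\lambda)_{\lambda^{\infty}}$, so membership in the singular subspace is equivalent to annihilation by all raising operators $E_a$. First I would let $E_a$ act on a single monomial $m_{\bm n}$ and push it to the right through the string of lowering operators. Because $[E_a,F_b]=0$ whenever $b\neq a$, only the factors $F_a=F_{c(i)}$ with $c(i)=a$ can produce a nonzero contribution; commuting $E_a$ past such a factor yields $[E_a,F_a]=H_a$, after which $H_a$ acts diagonally, contributing $-(\alpha_b,\alpha_a^\vee)$ for each remaining lowering operator $F_b$ that it passes and $(\lambda^{(k)},\alpha_a^\vee)$ when it finally reaches the highest weight vector $v_{\lambda^{(k)}}$.

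This immediately shows that $E_a\,w(\bm z,\bm t)$ can be written in the form \eqref{kappaeqn}, where the outer sum runs over the removed index $i$ with $c(i)=a$ and over ordered partitions $\bm n\in P^i_{l,N}$ of $\{1,\dots,l\}\setminus\{i\}$. The heart of the argument is then to compute each coefficient $\kappa_{\bm n,i}(\bm z,\bm t)$. To do so I would, for fixed target $\bm n$ and removed index $i$, collect all source monomials $m_{\bm m}$ with $\bm m\in P_{l,N}$ from which $F_{\bm n}v$ arises under $E_a$; these are precisely the partitions obtained by re-inserting $i$ into one of the chains of $\bm n$. Organising the insertions by the receiving chain $k$ and by whether $i$ lands at the end, in the middle, or at the start of that chain, each position contributes an elementary rational factor in $\bm t$ and $\bm z$ times a truncated pairing sum $\lambda^{(k)}_a-\sum_r(\alpha_{c(n^k_r)},\alpha_a^\vee)$, with $\lambda^{(k)}_a=(\lambda^{(k)},\alpha_a^\vee)$. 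The diagrammatic picture of $N$ chains emanating from the points $z_k$ is the convenient device for keeping track both of these factors and of the signs, the latter all reducing to the common prefactor $\epsilon_i$ relative to the insertion $\bm n_0$ at the first position.

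Summing the three types of insertion over all $k$, the middle-of-chain terms telescope: writing $S_p:=\lambda^{(k)}_a-\sum_{r=p+1}^{p_k}(\alpha_{c(n^k_r)},\alpha_a^\vee)$, the factors $\tfrac{1}{t_{n^k_p}-t_i}+\tfrac{1}{t_i-t_{n^k_{p+1}}}$ combine with the nested sums $S_p$ so that consecutive contributions cancel in pairs, with $S_0$ supplying the end contribution and $S_{p_k}=\lambda^{(k)}_a$ the start contribution. After this cancellation $\kappa_{\bm n,i}$ collapses to
\[
\kappa_{\bm n,i}(\bm z,\bm t)=\epsilon_i\sum_{k=1}^N\Bigl(-\sum_{p=1}^{p_k}\frac{(\alpha_{c(n^k_p)},\alpha_a^\vee)}{t_i-t_{n^k_p}}+\frac{(\lambda^{(k)},\alpha_a^\vee)}{t_i-z_k}\Bigr),
\]
which, upon identifying the coroot pairings $(\,\cdot\,,\alpha_a^\vee)$ with the invariant form via $\h\cong\h^*$ (so that $\alpha_a^\vee$ corresponds to $(-1)^{\pr a}\alpha_a$), reproduces up to an overall nonzero factor the left-hand side of the Bethe equation \eqref{BAE} for the variable $t_i$ of colour $c(i)=a$. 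Hence every coefficient $\kappa_{\bm n,i}$ vanishes when \eqref{BAE} holds, and therefore $E_a\,w(\bm z,\bm t)=0$ for all $a$, as required.

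I expect the main obstacle to be the sign bookkeeping intrinsic to the super setting. One must check that the parities combine so that all three insertion positions share the single sign $\epsilon_i$ (so that their contributions may simply be added), using that bringing $E_a$ through the generators in front of $F_{c(i)}$ produces the sign $(-1)^{|\bm n_0|+|\bm m|}$ relative to $F_{\bm n}v$; and one must verify that the telescoping cancellation is exact, namely that the boundary values $S_0$ and $S_{p_k}$ of the middle-chain sum match precisely the end-of-chain and start-of-chain contributions. Once these two points are secured, the identification with \eqref{BAE}, and hence the vanishing of $E_a\,w(\bm z,\bm t)$, is immediate.
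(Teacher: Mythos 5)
Your proposal is correct and follows essentially the same route as the paper's own proof: the same decomposition \eqref{kappaeqn} over removed indices $i$ with $c(i)=a$, the same enumeration of source monomials by insertion position (end, middle, start of each chain) with the common sign prefactor $\epsilon_i$, and the same telescoping collapse of $\kappa_{\bm n,i}$ onto the left-hand side of the Bethe equations. Your remark that the pairings $(\,\cdot\,,\alpha_a^\vee)$ match \eqref{BAE} only up to the nonzero factor $(-1)^{\pr a}$ is a fair (and harmless) refinement of the paper's claim of exact coincidence.
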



\subsection{Symmetries of the set of solutions to Bethe equations}\label{secsym}
We let the permutation group 
\be \label{symg}  S_{l_1} \times S_{l_2} \times \dots \times S_{l_{n+m-1}}\ee 
act on the variables $\bm t$ in the natural way: 
for each $1\leq s\leq m+n-1$, an element $\sigma\in S_{l_s}$ permutes the $t_i$ of colour $s$, i.e. those for which $c(i) = s$, and acts trivially on the remaining variables. 
Thus, in the labelling of \eqref{lidef}, $t_{i + \sum_{t<s} l_t} \mapsto t_{\sigma(i) + \sum_{t<s} l_t}$, $1\leq i\leq l_s$, for a given $\sigma\in S_{l_s}$.

This action sends solutions of the Bethe equations \eqref{BAE} to solutions. The value of the weight function \eqref{weight function} at any two solutions in the same orbit yield the same vector up to a sign. We call solutions in the same orbit  \emph{equivalent}.

In the bosonic case (i.e. $\gl(n|0) \cong \gl(n)$) the diagonal entries $(\alpha_{a},\alpha_a)$ of the symmetrized Cartan matrix are all non-zero. 
It follows that Bethe roots $t_i$ of the same colour are prohibited from coinciding by the Bethe equations \eqref{BAE}, and hence the permutation group \eqref{symg} acts freely on the set of solutions to the Bethe equations.

In the general supersymmetric case $\gl(n|m)$, the diagonal entry $(\alpha_{a},\alpha_a)$ of the symmetrized Cartan matrix is zero whenever $\pr{F_a} =1$, so the action of \eqref{symg} is not free in general. However, if $\pr{F_a}=1$ then the weight function is skew-symmetric under the transposition of any two Bethe roots of colour $a$, and is thus actually zero whenever two such roots coincide. Consequently, the group \eqref{symg} does act freely on the set of all those solutions to the Bethe equation for which the corresponding weight function is nonzero.

\subsection{Diagonalization of Hamiltonians in vector representations}\label{vecsec}
For this subsection we specialize to the case in which every site of the spin chain carries the defining representation of $\gl(m|n)$, i.e.  $L(\lambda^{(i)}) = \C^{m|n}$ for all $1\leq i\leq N$. Thus
\be L(\bm \lambda)= \left(\C^{m|n}\right)^{\otimes N}, \qquad v= e_1^{\otimes N}\nn\ee
and 
\be \HH_i = \sum_{\substack{j=1 \\ j\neq i}}^N \frac{\PP_{ij} }{z_i-z_j}, \qquad 1\leq i\leq N,\nn\ee
where, cf \eqref{Gaudin}, 
\be \PP_{ij} = \sum_{a,b=1}^{m+n} e^{(i)}_{ab}\otimes e^{(j)}_{ba} (-1)^{\pr b}\nn\ee
is the graded permutation operator between sites $i$ and $j$. 

\newcommand{\EE}{\mathcal E}
\begin{thm}\label{Hthm}
If the Bethe equations \eqref{BAE} hold, then $w(\bm z,\bm t)$ is an eigenvector of $\HH_i$ with eigenvalue
\be \EE_i:= \sum_{\substack{j=1 \\ j\neq i}}^N \frac 1{z_i-z_j} + \sum_{\substack{j=1\\ c(j)=1}}^{l} \frac 1{t_j-z_i}, \quad\text{for each}\quad 1\leq i\leq N.\nn\ee
\end{thm}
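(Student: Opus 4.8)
The plan is to compute $\HH_i\,w(\bm z,\bm t)$ directly from the definition of the weight function and to show, by induction on the total number $l$ of Bethe variables, that it equals $\EE_i\,w(\bm z,\bm t)$ as soon as the Bethe equations \eqref{BAE} hold. The starting observation is that, since $\HH_i=\sum_{j\neq i}\PP_{ij}/(z_i-z_j)$ acts only on the tensor factors and not on the rational coefficients $\omega_{\bm n}$, the graded permutation $\PP_{ij}$ simply interchanges the contents of sites $i$ and $j$ in each monomial $m_{\bm n}$, sending $x_i\otimes x_j\mapsto (-1)^{\pr{x_i}\pr{x_j}}\,x_j\otimes x_i$. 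Relabelling the two chains then lets me re-express $\PP_{ij}\,w$ as a sum over the same ordered partitions, but with $\omega_{\bm n}$ replaced by the coefficient obtained from swapping the roles of $z_i$ and $z_j$. In this way $\HH_i w-\EE_i w$ becomes, term by term, a rational function of $\bm z$ and $\bm t$, and the problem reduces to showing these coefficients vanish on shell.

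For the base case $l=0$ there are no Bethe roots, so $w=e_1^{\otimes N}$; because $\pr 1=0$ we have $\PP_{ij}(e_1\otimes e_1)=e_1\otimes e_1$, whence $\HH_i w=\big(\sum_{j\neq i}\tfrac{1}{z_i-z_j}\big)w=\EE_i w$, the second (colour-$1$) sum in $\EE_i$ being empty. This already accounts for the ``free'' part $\sum_{j\neq i}1/(z_i-z_j)$ of the eigenvalue, and records the fact that the $t$-dependent part of $\EE_i$ should arise entirely from the interaction of site $i$ with the bottom of each chain (i.e. with the colour-$1$ roots).

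For the inductive step I would peel off a single Bethe variable, reducing the problem from $l$ to $l-1$ roots: singling out one variable, say $t_l$, one analyses the dependence of $\HH_i w-\EE_i w$ on $t_l$, whose only poles sit at $t_l=z_k$ and $t_l=t_{j}$, and whose residues are themselves weight functions with $t_l$ removed, to which the inductive hypothesis applies. Applying that hypothesis to the Gaudin action on the reduced weight functions produces the main term $\EE_i w$, and the discrepancy consists of contributions localised at the removed root. The key point, exactly paralleling the computation of the coefficients $\kappa_{\bm n,i}$ in the proof of Theorem \ref{singthm}, is that after collecting these contributions and carrying out the partial-fraction cancellations they assemble precisely into the left-hand sides of the Bethe equations \eqref{BAE}, and so vanish when \eqref{BAE} hold. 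It is useful throughout that $w$ is already known to be singular (Theorem \ref{singthm}) and that $\HH_i$ preserves the singular subspace (Proposition \ref{Hcomprop}(2)), which tightly constrains the admissible discrepancy terms.

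The main obstacle is twofold. First, there is the super sign bookkeeping: since lowering operators of the same colour may be odd, every interchange of chains under $\PP_{ij}$ and every reordering of the $F$'s carries a Koszul sign $(-1)^{\pr{F_{c(i)}}\pr{F_{c(j)}}}$, and one must verify that these signs match across the two sides at each stage. Second, and more seriously, the vectors $F_{\bm n}v$ are not linearly independent (cf.\ the footnote to \eqref{kappaeqn}), so one cannot match coefficients naively; one must fix a definite prescription for distributing $\HH_i w$ over the $F_{\bm n}v$ and then check that, under that prescription, the remainder collapses to the Bethe combination. Establishing this last cancellation --- that the only surviving terms are exactly $\EE_i w$ --- is the crux, and I expect it to be where essentially all the work lies.
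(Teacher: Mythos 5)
Your opening step --- rewriting $\PP_{ij}w$ by relabelling the chains so that $(\HH_i-\EE_i)w$ becomes a sum over ordered partitions with explicit rational coefficients --- is exactly how the paper's proof begins. But the inductive step is where the proof has to happen, and as sketched it has a genuine gap. Your inductive hypothesis is the \emph{on-shell} statement for $l-1$ Bethe roots, yet the residues of $w(\bm z,\bm t)$ at $t_l=z_k$ or $t_l=t_j$ are (lowering-operator images of) weight functions in the remaining variables $t_1,\dots,t_{l-1}$, and these remaining variables do \emph{not} satisfy the Bethe equations of the reduced system: their equations still involve $t_l$. Nor can you vary $t_l$ while ``the Bethe equations hold,'' since those equations pin $\bm t$ to a discrete set. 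To make a residue-plus-induction argument work you would need an \emph{off-shell} identity --- something of the shape $\HH_i w=\EE_i w+\sum_j(\mathrm{BAE}_j)\cdot v_j$ valid for all $\bm t$ --- which you neither state nor establish; asserting that the discrepancy ``assembles precisely into the left-hand sides of the Bethe equations'' is the whole content of the theorem, not a step of its proof.

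The second missing ingredient is the special structure of the defining representation, which is what makes the direct computation tractable at all (the paper proves the general polynomial case only later, by a limiting argument, in Theorem \ref{thm ee2}). In $\left(\C^{m|n}\right)^{\otimes N}$ the only nonzero monomials $F_{\bm n}v$ are those whose chains carry colours $(p_k,\dots,2,1)$, and such a monomial depends only on the shape $\bm p$; this lets one sum over all ordered partitions of a fixed shape, i.e.\ (anti)symmetrize over same-colour roots, and it is only under this symmetrization that the partial-fraction identity \eqref{line1} converts the cross terms $\frac{z_i-z_j}{(t^{(1)}_i-z_j)(t^{(1)}_j-z_i)}$ into Bethe-equation terms plus a term of the same form one level further up the chain. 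The paper's ``induction on the number of auxiliary variables'' is precisely this recursion along the colours of a chain, not an induction on $l$ by removing one root and taking residues. Without identifying this mechanism (or supplying an equivalent off-shell identity), the crux cancellation in your proposal remains unproved.
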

\begin{proof} As in the proof of Theorem \ref{singthm}, we shall first give a prescription for writing $(\HH_i-\EE_i)w(\bm z,\bm t)$ as a linear combination of the vectors $F_{\bm n}v$, ${\bm n}\in P_{l,N}$. There is some freedom in how this is done because these vectors are not linearly independent in general. As in the previous proof, all that  matters is that we have a consistent prescription. We shall then show that with our prescription the coefficients of each $F_{\bm n}v$ is zero if the Bethe equations \eqref{BAE} hold.

The prescription is as follows. Let $\rho_{ij}$ be the involutive map $P_{l,N}\to P_{l,N}$ which exchanges the $i$th and $j$th parts of an ordered partition and leaves the rest unaltered. Then, cf \eqref{weight function}, 
\be \PP_{ij} m_{\bm n}  =  (-1)^{|{\bm n}|} \omega_{\bm n}(\bm{z}, \bm{t}) \PP_{ij} F_{\bm n} v 
                  =  (-1)^{|\rho_{ij}(\bm n)|} \omega_{\bm n}(\bm{z}, \bm{t}) F_{\rho_{ij}(\bm n)} v \nn\ee
and hence
\be \PP_{ij} w(\bm{z}, \bm{t}) = 
    \sum_{{\bm n} \in P_{l, N}} (-1)^{|{\bm n}|} \omega_{\rho_{ij}(\bm n)}(\bm{z}, \bm{t}) F_{\bm n} v
  = \sum_{{\bm n} \in P_{l, N}} \left( (-1)^{|{\bm n}|} \omega_{{\bm n}}(\bm{z}, \bm{t}) F_{\bm n} v \right)  \frac{\omega_{\rho_{ij}(\bm n)}(\bm{z}, \bm{t})}{\omega_{{\bm n}}(\bm{z}, \bm{t})} .\nn\ee 
Thus
\be\label{he} (\HH_i - \EE_i) w(\bm z, \bm t) = 
 \sum_{{\bm n} \in P_{l, N}} \left( (-1)^{|{\bm n}|} \omega_{{\bm n}}(\bm{z}, \bm{t}) F_{\bm n} v \right) 
f(\bm n,i,\bm z, \bm t) \ee
where
\begin{align*}  
f(\bm n,i,\bm z, \bm t ) &:= \sum_{\substack{j=1 \\ j\neq i}}^N
\frac 1 {z_i-z_j} \frac{\omega_{\rho_{ij}(\bm n)}(\bm{z}, \bm{t})}{\omega_{{\bm n}}(\bm{z}, \bm{t})} - \EE_i  \\
&= \sum_{\substack{j=1 \\ j\neq i}}^N
\frac 1 {z_i-z_j} \left(\frac{\omega_{\rho_{ij}(\bm n)}(\bm{z}, \bm{t})}{\omega_{{\bm n}}(\bm{z}, \bm{t})} - 1 \right) 
 - \sum_{\substack{j=1\\ p_j\neq 0}}^N \frac 1{t^{(1)}_j-z_i}.
\end{align*}
Here and in what follows, we use the following useful relabeling of the variables $t$: for any given ordered partition $\bm n$ we shall write
\be t^{(k)}_{i} := t_{n^i_{p_i+1-k}} , \quad 1\leq i\leq N, \, 1\leq k \leq p_i, \nn\ee
so that for example $t^{(1)}_j$ is the Bethe root associated to the first lowering operation in the $j$th tensor factor. (Note that this labeling is dependent on the choice of $\bm n$, i.e. on which term in the sum over ordered partitions we are considering.)

Now we proceed to show that in the sum \eqref{he} the coefficient of each vector $F_{\bm n} v$ is in fact zero. 

Note the structure of the defining representation:
\be \boxed{1}  \xrightarrow{\,\,F_1\,\,} \boxed{2} \xrightarrow{\,\,F_2\,\,} \boxed 3 \xrightarrow{\,\,F_3\,\,} \dots \xrightarrow{F_{m+n-1}} \boxed{m+n}\,,\nn \ee
where $\boxed{k}$ denotes the one-dimensional weight space spanned by $e_k$, $1\leq k\leq m+n$.   
Therefore, in \eqref{weight function}, $F_{\bm n}v$ is zero whenever the number of $F_{a+1}$'s appearing exceeds the number of $F_a$'s, for any $1\leq a<n+m-1$. Hence we can assume the numbers $\bm l= (l_1,\dots,l_r)$, cf. \eqref{lidef}, are weakly decreasing, i.e. form a partition.  

Second, the only ordered partitions $\bm n$ that contribute in the sum \eqref{weight function} are those in which 
$$(c(n^i_1),c(n^i_2),\dots,c(n^i_{p_i})) = (p_i,\dots,3,2,1),$$ for each part $1\leq i\leq N$. And for any such ordered partition, the vector $F_{\bm n}v$ depends on $\bm n$ only through the composition $p_1+\dots+p_N=l$ of $l$: 
\be F_{\bm n} v= F_{p_1} F_{p_1-1} \dots F_1 e_1 \otimes \dots \otimes F_{p_N} F_{p_N-1} \dots F_1 e_1 =e_{p_1+1}\otimes \dots \otimes e_{p_N+1}. \label{Fp} \ee
Let us pick and fix, then, any composition\footnote{In fact the only compositions $\bm p$ we need consider are those consistent with the fixed choice of $\bm l$. That is, for each $1\leq i\leq n+m-1$, $l_i$ must be the number of parts of $p$ of size $\geq i$; or equivalently, the parts of $\bm p$ must be a permutation of the conjugate partition to $\bm l$.} $\bm p=(p_1,\dots,p_N)$ of $l$. Then if $P(\bm p)$ denotes the set of ordered partitions of shape $\bm p$, the coefficient of the vector \eqref{Fp} in \eqref{he} is
\be \sum_{\bm n\in P(\bm p)}  (-1)^{|{\bm n}|} \omega_{{\bm n}}(\bm{z}, \bm{t})f(\bm n,i,\bm z, \bm t) \label{hee}\ee
and we need to show this sum vanishes.

Consider first the case $p_i=0$. Then 
\be f(\bm n,i,\bm z, \bm t) 
= \sum_{\substack{j=1 \\ j\neq i\\p_j= 0}}^N
\frac 1 {z_i-z_j} 
\left(1 - 1 \right) 
+\sum_{\substack{j=1 \\ j\neq i\\p_j\neq 0}}^N
\frac 1 {z_i-z_j} 
\left(\frac{t^{(1)}_j-z_j}{t^{(1)}_j-z_i} - 1 \right) 
 - \sum_{\substack{j=1\\ p_j\neq 0}}^N \frac 1{t^{(1)}_j-z_i} \nn\ee
which vanishes identically, term by term in the sum on $j$. (Note $p_i=0$ implies that actually $j\neq i$ in the final sum here.)

Next consider the case $p_i\geq 1$. Then
\begin{align*} f(\bm n,i,\bm z, \bm t) 
&= \sum_{\substack{j=1 \\ j\neq i\\p_j= 0}}^N
\frac 1 {z_i-z_j} 
\left(\frac{t^{(1)}_i-z_i}{t^{(1)}_i-z_j} - 1 \right) 
+\sum_{\substack{j=1 \\ j\neq i\\p_j\neq 0}}^N
\frac 1 {z_i-z_j} 
\left(\frac{t^{(1)}_i-z_i}{t^{(1)}_i-z_j} \frac{t^{(1)}_j-z_j}{t^{(1)}_j-z_i} - 1 \right) 
 - \sum_{\substack{j=1\\ p_j\neq 0}}^N \frac 1{t^{(1)}_j-z_i} \\
&= \sum_{\substack{j=1 \\ j\neq i\\p_j= 0}}^N
\frac {-1} {t^{(1)}_i-z_j} 
+\sum_{\substack{j=1 \\ j\neq i\\p_j\neq 0}}^N
\frac {t^{(1)}_i-t^{(1)}_j}{(t^{(1)}_i-z_j)(t^{(1)}_j-z_i)}
 - \sum_{\substack{j=1\\ p_j\neq 0}}^N \frac 1{t^{(1)}_j-z_i} \\
&= \sum_{\substack{j=1 \\ j\neq i\\p_j= 0}}^N
\frac {-1} {t^{(1)}_i-z_j} 
-\sum_{\substack{j=1 \\ j\neq i\\p_j\neq 0}}^N
\frac {t^{(1)}_j-z_j}{(t^{(1)}_i-z_j)(t^{(1)}_j-z_i)}
- \frac 1{t^{(1)}_i-z_i} \\
&= \sum_{\substack{j=1 \\ j\neq i}}^N
\frac {-1} {t^{(1)}_i-z_j} 
-\sum_{\substack{j=1 \\ j\neq i\\p_j\neq 0}}^N
\frac {z_i-z_j}{(t^{(1)}_i-z_j)(t^{(1)}_j-z_i)}
- \frac 1{t^{(1)}_i-z_i} \\
&= \sum_{\substack{j=1}}^N
\frac {-1} {t^{(1)}_i-z_j} 
-\sum_{\substack{j=1 \\ j\neq i\\p_j\neq 0}}^N
\frac {z_i-z_j}{(t^{(1)}_i-z_j)(t^{(1)}_j-z_i)}.
\end{align*}
Notice that the first sum here appears in the Bethe equation for $t^{(1)}_i$. 
Consider the second sum. If this sum is empty, i.e. if there are no $j\neq i$ such that $p_j\neq 0$ then one immediately has the equality \eqref{feqn} below, and one can skip to that step. Otherwise, pick any $j\neq i$ such that $p_j\neq 0$. 
Now, taking the sum over $\bm n\in P(\bm p)$ in \eqref{hee} amounts to summing over all permutations of the variables $t^{(k)}_{i}$, $1\leq i\leq N$, among themselves, for each fixed colour $k$. Moreover, the factor $(-1)^{\pr{\bm n}}$ means we should symmetrize for those $k$ such that $\pr{F_k}=0$, and antisymmetrize for those $k$ such that $\pr{F_k}=1$. 
Observe then the following identity valid for complex numbers $z_i\neq z_j$, $t_i\neq t_j$, and $s_i$, $s_j$:
\begin{align}\label{line1} &\frac 1 {s_i-t_i} \frac 1 {t_i - z_i} \frac 1 {s_j-t_j} \frac 1 {t_j - z_j}\times \\
   &\left(  \frac{z_i-z_j}{(t_i-z_j)(t_j-z_i)} + \frac{1\pm 1}{t_i-t_j} + \frac{\mp 1}{t_i-s_j} + \frac{\mp 1 }{s_i-t_j} \mp \frac{t_i-t_j}{(s_i-t_j)(s_j-t_i)} \right)  \pm  (t_i \leftrightarrow t_j) =0.
\nn\end{align} 
By virtue of this we have (the factors in the first line of  \eqref{line1} are present in $\omega_{\bm n}(\bm z,\bm t)$)
\begin{align*} &\sum_{\bm n\in P(\bm p)}  (-1)^{|{\bm n}|} \omega_{\bm n}(\bm z,\bm t) \frac {z_i-z_j}{(t^{(1)}_i-z_j)(t^{(1)}_j-z_i)}\\
=& \sum_{\bm n\in P(\bm p)}  (-1)^{|{\bm n}|} \omega_{\bm n}(\bm z,\bm t)\left( -\frac{(\alpha_1,\alpha_1)}{t^{(1)}_i-t^{(1)}_j} - \frac{(\alpha_1,\alpha_2)}{t^{(1)}_i-t^{(2)}_j} - \frac{(\alpha_1,\alpha_2)}{t^{(2)}_i-t^{(1)}_j} + (-1)^{\pr{F_1}} \frac{t^{(1)}_i-t^{(1)}_j}{(t^{(2)}_i-t^{(1)}_j)(t^{(2)}_j-t^{(1)}_i)}\right),\nn
\end{align*} 
noting $(\alpha_1,\alpha_1) = 1+ (-1)^{\pr{F_1}}$ and $(\alpha_1,\alpha_2)=-(-1)^{\pr{F_1}}$ (recall for us always $\pr{e_1}=0$). The first three terms occur in the Bethe equations for $t^{(1)}_i$ and $t^{(2)}_i$ and we would like to keep them. The final term has the same form as the left-hand side but with $t^{(1)}$'s playing the role of $z$'s and $t^{(2)}$'s playing the role of $t^{(1)}$'s. Thus we may continue to apply the identity above, recursively. If $p_i< p_j$ we need at the final step to use instead the identity
\be\frac 1 {t_i - z_i} \frac 1 {s_j-t_j} \frac 1 {t_j - z_j}
   \left(  \frac{z_i-z_j}{(t_i-z_j)(t_j-z_i)} + \frac{1\pm 1}{t_i-t_j} + \frac{\mp 1}{t_i-s_j} + \frac{\mp 1}{s_j-t_j}  \right) \pm  (t_i \leftrightarrow t_j) =0\nn\ee 
and similarly if $p_i>p_j$.
In this way one eventually obtains
\begin{align}& \sum_{\bm n\in P(\bm p)}  (-1)^{|{\bm n}|} \omega_{{\bm n}}(\bm{z}, \bm{t})f(\bm n,i,\bm z, \bm t)
= \sum_{\bm n\in P(\bm p)}  (-1)^{|{\bm n}|} \omega_{{\bm n}}(\bm{z}, \bm{t}) \label{feqn}\\ 
 &\times \left(  \sum_{\substack{j=1}}^N \frac {-1} {t^{(1)}_i-z_j} 
+\sum_{k=1}^{p_i} \sum_{\substack{j=1\\j\neq i\\p_j\geq k}}^N  \frac{ (\alpha_{k},\alpha_{k})}{t^{(k)}_i-t^{(k)}_j}  
+\sum_{k=1}^{p_i} \sum_{\substack{j=1\\j\neq i\\p_j\geq k+1}}^N  \frac{ (\alpha_{k},\alpha_{k+1})}{t^{(k)}_i-t^{(k+1)}_j}  
+\sum_{k=2}^{p_i} \sum_{\substack{j=1\\j\neq i\\p_j\geq k-1}}^N  \frac{ (\alpha_{k},\alpha_{k-1})}{t^{(k)}_i-t^{(k-1)}_j}  
 \right),\nn\end{align}
noting now that e.g. $(\alpha_2,\alpha_2) = (-1)^{\pr{F_1}} \left(1 + (-1)^{\pr{F_2}} \right)$. 
And this last expression is zero as required, provided the following equations hold:
\begin{align*}   \sum_{\substack{j=1}}^N \frac {-1} {t^{(1)}_i-z_j} 
              + \sum_{\substack{j=1\\j\neq i \\p_j\geq 1}}^N  \frac{ (\alpha_{1},\alpha_{1})}{t^{(1)}_i-t^{(1)}_j}  
              + \sum_{\substack{j=1 \\p_j\geq 2}}^N  \frac{ (\alpha_{1},\alpha_{2})}{t^{(1)}_i-t^{(2)}_j}  &=0\\
                \sum_{\substack{j=1 \\p_j\geq 1}}^N  \frac{ (\alpha_{2},\alpha_{1})}{t^{(2)}_i-t^{(1)}_j}  
              + \sum_{\substack{j=1 \\j\neq i\\p_j\geq 2}}^N  \frac{ (\alpha_{2},\alpha_{2})}{t^{(2)}_i-t^{(2)}_j} 
              + \sum_{\substack{j=1 \\p_j\geq 3}}^N  \frac{ (\alpha_{2},\alpha_{3})}{t^{(2)}_i-t^{(3)}_j}  &=0\\
\vdots\qquad\qquad&\vdots \\
                \sum_{\substack{j=1 \\p_j\geq p_i-1}}^N  \frac{ (\alpha_{p_i},\alpha_{p_i-1})}{t^{(p_i)}_i-t^{(p_i-1)}_j}  
              + \sum_{\substack{j=1 \\j\neq i\\p_j\geq p_i}}^N  \frac{ (\alpha_{p_i},\alpha_{p_i})}{t^{(p_i)}_i-t^{(p_i)}_j} 
              + \sum_{\substack{j=1 \\p_j\geq p_i+1}}^N  \frac{ (\alpha_{p_i},\alpha_{p_i+1})}{t^{(p_i)}_i-t^{(p_i+1)}_j} &=0.\end{align*} 
(The very final sum here is absent if $p_i=n+m-1$. Note also that terms of the form $\frac{(\alpha_k,\alpha_{k\pm 1})}{t^{(k)}_i-t^{(k\pm 1)}_i}$ are present in these equations but cancel correctly to produce the expression above.)
These are indeed the Bethe equations \eqref{BAE} for the variables $t^{(k)}_i$, $k=1,2,\dots, p_i$. The theorem is thus proved. 
\end{proof}

We shall later prove that the same statement is true for tensor products of arbitrary polynomial representations, see Theorem \ref{thm ee2}.

\subsection{Completeness of Bethe Ansatz for $\C^{m|n}\otimes V(\mu)$}
For this section, we pick and fix a polynomial module $V(\mu)$, cf. \S\ref{polynomialmodules}, and consider  the tensor product of $V(\mu)$ with the defining (i.e. first vector) representation $\C^{m|n}$. Theorem \ref{singthm} shows that the weight function $w(z_1,z_2,\bm t)$ is a singular weight vector of $\C^{m|n}\otimes V(\mu)$ if the Bethe equations \eqref{BAE} hold for $z_1,z_2$ and $\bm t$. In the present case we have also the following stronger result, which is usually referred to as \emph{completeness}.
\begin{thm}\label{compmuthm}
For any fixed pair of distinct complex numbers $(z_1,z_2)$, the set of vectors $w(z_1,z_2,\bm t)$, as $\bm t$ runs over the solutions to the Bethe equations \eqref{BAE}, form a basis of $\left(\C^{m|n}\otimes V(\mu)\right)^{\textup{sing}}$. 
\end{thm}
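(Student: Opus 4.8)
The plan is to reduce the statement to a per-weight assertion about the Bethe equations using the representation theory of \S\ref{pieri}, and then to analyse those equations directly in the special two-point geometry at hand. First I would describe $\left(\C^{m|n}\otimes V(\mu)\right)^{\textup{sing}}$ precisely. By the Pieri rule of \S\ref{pieri}, $\C^{m|n}\otimes V(\mu)\cong\bigoplus_\rho V(\rho)$, the sum running over hook partitions $\rho$ obtained from $\mu$ by adding a single box; this decomposition is multiplicity-free and each summand is irreducible, so its singular vectors are exactly the scalar multiples of the highest weight vector $v_\rho$. Hence the singular space is spanned by the $v_\rho$, the weight of $v_\rho$ being $\lambda^{(2)}+\eps_a$ for the appropriate index $a$, and for distinct $\rho$ these weights are distinct. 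Writing $\lambda=\eps_1+\lambda^{(2)}$ and $\lambda^{\infty}=\lambda-\alpha(\bm l)$, the equation $\lambda^{\infty}=\lambda^{(2)}+\eps_a$ forces $\alpha(\bm l)=\eps_1-\eps_a=\sum_{p=1}^{a-1}\alpha_p$, so by linear independence of the simple roots $\bm l=(\underbrace{1,\dots,1}_{a-1},0,\dots,0)$ in the notation of \eqref{lidef}. By the ``moreover'' part of Lemma \ref{boxlem} the box can be added (i.e.\ $\rho$ is a genuine hook partition) precisely when the inequalities \eqref{bd} hold strictly.

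Since each singular weight space is at most one-dimensional, and by Theorem \ref{singthm} the vector $w(z_1,z_2,\bm t)$ is singular of weight $\lambda^{\infty}$ whenever \eqref{BAE} holds, completeness is equivalent to the following per-weight claim: for each admissible $a$ the Bethe equations \eqref{BAE} with $\bm l=(1^{a-1},0,\dots)$ admit a unique solution and the weight function is nonzero there; for data not of this form, or for non-admissible $a$, the weight space carries no singular vector, so every Bethe solution yields $w=0$. I would next exploit the simplifications of this regime. Because every part of $\bm l$ is at most $1$, each colour occurs exactly once, the symmetry group \eqref{symg} is trivial (so each orbit is a single point), and $(\alpha_{c(i)},\alpha_{c(j)})$ vanishes unless the colours are equal or adjacent; moreover $(\alpha_i,\lambda^{(1)})=(\alpha_i,\eps_1)=\delta_{i,1}$, so $z_1$ enters only the equation for $t_1$. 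Thus \eqref{BAE} is a nearest-neighbour chain $t_1,\dots,t_{a-1}$ with a single inhomogeneous endpoint.

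The plan for the solvability step is to eliminate recursively from the free end: the equation for $t_{a-1}$ determines it rationally in terms of $t_{a-2}$, and descending along the chain expresses every $t_p$ rationally in terms of $t_1$, leaving a single equation for $t_1$ into which the $z_1$ inhomogeneity enters; solving this gives existence, and tracking that the relevant root is unique gives uniqueness. Here I would use the strict inequalities \eqref{bd} (together with their reformulation via the Casimir in Lemma \ref{clem}) to certify that the resulting roots are finite, admissible, and mutually distinct where needed, and that the eliminations never become degenerate. (Alternatively one can follow the unique solution by continuity as $z_1$ is moved in from a degenerate limit in which the chain decouples.)

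Finally, the nonvanishing of $w(z_1,z_2,\bm t)$ at this solution is what I expect to be the main obstacle, since the sum \eqref{weight function} over ordered partitions could in principle cancel, and — unlike the even case — the vanishing diagonal entries $(\alpha_a,\alpha_a)=0$ at odd roots obstruct a direct master-function count. The single-column structure of $\C^{m|n}$ restricts the sum substantially: only ordered partitions whose $z_1$-chain carries an initial segment of colours $1,\dots,j$ contribute. Since $w$ is singular of weight exactly $\lambda^{(2)}+\eps_a$, it is automatically a scalar multiple of $v_\rho$; I would prove the scalar is nonzero by pairing $w$ against $v_\rho$ with the tensor Shapovalov form and evaluating the single resulting coefficient at the solution found above, or, for real $z_1,z_2$, by invoking the positive-definiteness of the associated Hermitian form and then extending by analyticity in $(z_1,z_2)$. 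Combining the three steps, the assignment $\bm t\mapsto w(z_1,z_2,\bm t)$ is a bijection from Bethe solutions onto $\{v_\rho\}$ up to scalars, which is a basis of $\left(\C^{m|n}\otimes V(\mu)\right)^{\textup{sing}}$.
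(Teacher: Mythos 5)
Your proposal follows the paper's proof essentially step for step: the Pieri-rule, multiplicity-free reduction to (at most) one Bethe solution per admissible added box, the recursive elimination of the nearest-neighbour chain of Bethe equations from the free end down to a single equation for the colour-$1$ root, and the use of the strict inequalities of Lemma \ref{boxlem} to certify that the elimination never degenerates and that the resulting roots are distinct and admissible. The only minor divergence is the final non-vanishing step, which the paper treats as immediate once the roots are shown to be pairwise distinct, nonzero and different from $z_1$ (e.g.\ the unique surviving term proportional to $e_{a+1}\otimes v_\lambda$ has a single manifestly nonzero coefficient), whereas you propose the more roundabout Shapovalov-pairing or positivity-plus-analyticity route.
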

\begin{proof}
Suppose $\mu$ has $k$ parts. Let $\lambda = \sum_{a=1}^{m+n} \lambda_a \eps_a\in \h^*$ be the weight such that $V(\mu) \cong L(\lambda)$, cf. \eqref{VLeqn}. 
Then the Pieri rule, \S\ref{pieri}, states that 
\be \C^{m|n} \otimes V(\mu) \cong \bigoplus_{\text{admissible }a} L\left(\lambda+\eps_1-\sum_{b=1}^{a} \alpha_b\right)\label{csum}\ee
where the sum is over those $a\in \{0,1,2,\dots,m+n-1\}$ such that it is possible to add a box at the end of the row (when $\pr{a+1}=0$) or column (when $\pr{a+1}=1$) corresponding to the $\eps_{a+1}$ component of the integral weight specified by $\mu$. (Cf. the diagrams in Example \ref{diagexmp}.)

In particular, the irreducible component $L(\lambda+\eps_1)$ containing a highest weight vector of $\C^{m|n} \otimes V(\mu)$ is always present and corresponds to the trivial solution (no $t$'s) to the Bethe equations.

Consider, then, any one of the other summands $L\left(\lambda+\eps_1-\sum_{b=1}^a \alpha_b\right)$, i.e. pick an $a$, $1\leq a\leq m+n-1$. If $w(z_1,z_2,\bm t)$ is to yield the highest weight vector of this irreducible component then it must be that $\bm t = (t^{(1)},t^{(2)},\dots,t^{(a)})$ with each $t^{(b)}$, $1\leq b\leq a$, of colour $b$, i.e. associated to $F_b$. 

Without loss of generality, we pick $z_1=1$ and $z_2=0$. 
Then the Bethe equations \eqref{BAE} take the following form.
Note that  $(\alpha_b,\alpha_{b-1}) =- (-1)^{\pr b}$. For each $b$, define
\be \bar\lambda_b := (\lambda,\alpha_b) = (-1)^{\pr b} \lambda_b - (-1)^{\pr{b+1}} \lambda_{b+1}.\nn\ee
Then
\begin{alignat*}{6}
-\frac{1}{t^{(1)} - 1}  
&- \frac {\bar\lambda_1} {t^{(1)}} & &- \frac{(-1)^{\pr 2}}{t^{(1)} - t^{(2)}} & & & & & & & & = 0 \\
&- \frac {\bar\lambda_2} {t^{(2)}}&  &- \frac{(-1)^{\pr 2}}{t^{(2)} - t^{(1)}}  & & - \frac{(-1)^{\pr 3}}{t^{(2)} - t^{(3)}} & & & & & & = 0 \\
& & & & & \vdots & & & & & &  \\
&- \frac {\bar\lambda_{a-1}} {t^{(a-1)}} & & & & & &- \frac{(-1)^{\pr{a-1}}}{t^{(a-1)} - t^{(a-2)}} & &- \frac{(-1)^{\pr{a}}}{t^{(a-1)} - t^{(a)}}  & &   = 0 \\                     
&- \frac {\bar\lambda_a} {t^{(a)}} & &  & & & &  &  &- \frac{(-1)^{\pr a}}{t^{(a)} - t^{(a-1)}}   & &  = 0 .\\
\end{alignat*}
Provided $\bar\lambda_a\neq 0$, the final equation here may be solved for $t^{(a)}$,
\be t^{(a)} = t^{(a-1)} \frac{\bar\lambda_a}{(-1)^{\pr a} + \bar\lambda_a}, \nn\ee
and the solution inserted into the penultimate equation to yield, after some rearrangement,
\be -\frac{\tilde\lambda_{a - 1}}{t^{(a-1)}} +  \frac{(-1)^{\pr{a-1}}}{t^{(a-1)} - t^{(a-2)}} =0,\quad \text{where} \quad \tilde\lambda_{a - 1} = \bar\lambda_a+ \bar\lambda_{a-1} + (-1)^{\pr a} \nn\ee
But this is now of the same form as the equation for $t^{(a)}$. Hence, letting
\be\tilde \lambda_s:= \sum_{r=s}^a \bar\lambda_r + \sum_{r=s+1}^a (-1)^{\pr r}
=  (-1)^{\pr s} \lambda_s - (-1)^{\pr{a+1}} \lambda_{a+1} + \sum_{r=s+1}^a (-1)^{\pr r}
, \quad s=1,2,\dots, a,\nn\ee 
we have by a finite recursion that for each $s=2,3,\dots , a-1$,
\begin{subequations}\label{recrel}
\be t^{(s)} = t^{(s-1)} \frac{\tilde\lambda_s}{(-1)^{\pr s} + \tilde\lambda_s},\ee
and then finally 
\be -\frac 1{t^{(1)}-1} - \frac{\tilde\lambda_1}{t^{(1)}} = 0 \implies t^{(1)} = \frac{\tilde\lambda_1}{1+\tilde\lambda_1}.\ee
\end{subequations}
Thus we have that for $\alpha(\bm l) = \alpha_1+\dots+\alpha_a$ the Bethe equations possess the unique solution 
\begin{align*} t^{(s)} &= \prod_{r=1}^s \frac{\tilde \lambda_r}{(-1)^{\pr r} + \tilde \lambda_r}\\
&= \prod_{r=1}^s \frac{ 
(-1)^{\pr r}\lambda_r - (-1)^{\pr{a+1}} \lambda_{a+1} + \sum_{p=r+1}^a (-1)^{\pr p}}
{  
(-1)^{\pr r}\lambda_r - (-1)^{\pr{a+1}} \lambda_{a+1} + \sum_{p=r}^a (-1)^{\pr p}
}, \quad s=1,2,\dots,a, \end{align*}
provided all denominators and numerators in this product are non-zero, i.e. provided $\tilde \lambda_s\neq 0 \neq (-1)^{\pr s} + \tilde \lambda_s$ for each $s=1,2,\dots,a$.  This is the case, by Lemma \ref{boxlem}. Also, $\tilde\lambda_s\neq (-1)^{\pr s} + \tilde\lambda_{s}$. Consequently the $t^{(s)}$, $1\leq s\leq a$, are all non-zero and $$t^{(a)}\neq t^{(a-1)}\neq \dots \neq t^{(1)} \neq 1.$$
It follows that the weight function $w(1,0;t^{(1)},\dots,t^{(a)})$ is well-defined and non-zero.
\end{proof}

\begin{rem} In the setting above, since there is at most one Bethe root of any given colour, distinct solutions to the Bethe equations are always non-equivalent in the sense of \S\ref{secsym}. 
\end{rem}

\subsection{Simple spectrum of Gaudin Hamiltonians for $\C^{m|n}\otimes V(\mu)$.}
We continue, as in the previous subsection, to specialize to the tensor product of a polynomial module $V(\mu)$ with a copy of the defining representation $\C^{m|n}$. 
\begin{prop}\label{specprop}
Let $w, w' \in \C^{m|n}\otimes V(\mu)$ be two Bethe vectors corresponding to distinct solutions to the Bethe equations. Then $w,w'$ are eigenvectors of the Gaudin Hamiltonian $\HH:= \HH_1 =-\HH_2$ with distinct eigenvalues. 
\end{prop}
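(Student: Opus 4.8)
The plan is to reduce the proposition to the statement that distinct Bethe vectors carry distinct values of the quadratic Casimir, and then to deduce this from Lemma~\ref{clem} together with the strict form of the inequalities in Lemma~\ref{boxlem}. First I would set up the eigenvectors. Since $N=2$, Proposition~\ref{Hcomprop}(4) gives $\HH_2=-\HH_1$, so there is a single Gaudin Hamiltonian $\HH=\HH_1$. By the Pieri rule \eqref{csum} the decomposition of $\C^{m|n}\otimes V(\mu)$ is multiplicity-free, so each summand $L(\lambda+\eps_{a+1})$ contributes a one-dimensional space of singular vectors of its weight. As $\HH$ commutes with the $\gl(m|n)$-action (Proposition~\ref{Hcomprop}(2)), it preserves each such one-dimensional singular space and hence acts on it by a scalar; in particular every singular vector is automatically an $\HH$-eigenvector. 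By the completeness Theorem~\ref{compmuthm} the Bethe vectors are exactly these singular vectors, and its proof exhibits a bijection between distinct solutions of the Bethe equations and distinct admissible indices $a$, i.e. distinct summands $L(\lambda+\eps_{a+1})$.

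Next I would compute the eigenvalues via the Casimir. Writing the first tensor factor as $\C^{m|n}=L(\eps_1)$ and the second as $V(\mu)=L(\lambda)$, identity \eqref{Ceqn} gives
\[ \sum_{a,b}E_{ab}^{(1)}E_{ba}^{(2)}(-1)^{\pr b}=\Delta\mc C-\mc C\otimes 1-1\otimes\mc C, \]
which acts on the singular vector generating $L(\lambda+\eps_{a+1})$ by the scalar $\mc C_{\lambda+\eps_{a+1}}-\mc C_{\eps_1}-\mc C_\lambda$. Hence the $\HH$-eigenvalue attached to the admissible index $a$ is
\[ \frac{\mc C_{\lambda+\eps_{a+1}}-\mc C_{\eps_1}-\mc C_\lambda}{z_1-z_2}. \]
Because $z_1\neq z_2$ is fixed and $\mc C_{\eps_1},\mc C_\lambda$ are common to every summand, it remains only to show that the values $\mc C_{\lambda+\eps_{a+1}}$ are pairwise distinct as $a$ ranges over the admissible indices.

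This is the crux of the argument. Given admissible $a<a'$, I would apply Lemma~\ref{clem} with upper index $a'$ and lower index $a+1$ to get
\[ -\tfrac12\!\left(\mc C_{\lambda+\eps_{a'+1}}-\mc C_{\lambda+\eps_{a+1}}\right)=(-1)^{\pr{a+1}}\lambda_{a+1}-(-1)^{\pr{a'+1}}\lambda_{a'+1}+\sum_{b=a+1}^{a'}(-1)^{\pr b}=:D, \]
so the claim is equivalent to $D\neq 0$. Splitting off the $b=a+1$ term, write $D=G+(-1)^{\pr{a+1}}$ with $G=(-1)^{\pr{a+1}}\lambda_{a+1}-(-1)^{\pr{a'+1}}\lambda_{a'+1}+\sum_{b=a+2}^{a'}(-1)^{\pr b}$. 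A direct comparison shows $G$ equals $(-1)^{\pr{a+1}}$ times the left-hand side of \eqref{bd} evaluated at $r=a+1$ with upper index $a'$. By the ``moreover'' part of Lemma~\ref{boxlem}, admissibility of $a'$ (equivalently, that $\lambda+\eps_{a'+1}$ is again a hook partition, which is what ``admissible'' means in \eqref{csum}) forces \eqref{bd} to be \emph{strict} for all $r\leq a'$. Therefore $G>0$ when $\pr{a+1}=0$ and $G<0$ when $\pr{a+1}=1$; consequently $D=G+1>1$ in the first case and $D=G-1<-1$ in the second, so $D\neq 0$ in all cases and the Casimir values separate.

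The main obstacle is the parity-dependent sign: multiplying \eqref{bd} by $(-1)^{\pr{a+1}}$ reverses the inequality for odd $a+1$, and it is precisely this reversal that conspires with the sign of the residual $(-1)^{\pr{a+1}}$ term to keep $D$ bounded away from zero on \emph{both} sides. Keeping this sign bookkeeping consistent — and confirming that admissibility in the Pieri sum \eqref{csum} is synonymous with the strict form of \eqref{bd} from \S\ref{pieri} — is the only delicate point; the remaining steps are direct substitutions.
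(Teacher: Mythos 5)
Your argument is correct and follows essentially the same route as the paper: reduce $\HH$ to $\Delta\mc C$ modulo constants via \eqref{Ceqn}, identify distinct Bethe solutions with distinct Pieri summands $L(\lambda+\eps_{a+1})$ from Theorem \ref{compmuthm}, and separate the Casimir values using Lemma \ref{clem} together with Lemma \ref{boxlem}. You merely make explicit the sign bookkeeping that the paper leaves to the reader (and in fact the weak form of \eqref{bd} already suffices, since $D=\pm(X+1)$ with $X\geq 0$).
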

\begin{proof}
Recall the relation \eqref{Ceqn} for the quadratic Casimir of \S\ref{casimir}.
Since $\mc C$ acts as a constant in any irreducible module, $\mc C\otimes 1 + 1\otimes \mc C$ acts as a constant on $\C^{m|n} \otimes V(\mu)$. It remains to consider the spectrum of $\Delta \mc C$.
As was shown in the preceding subsection, if $w$ and $w'$ are Bethe vectors corresponding to distinct solutions to the Bethe equations then they are highest weight vectors of two non-isomorphic irreducible components of $\C^{m|n}\otimes V(\mu)$. Specifically, there exist integers $r$ and $a$, $1\leq r\leq a\leq m+n-1$, such that (without loss of generality) $w$ is a highest weight vector of a copy of $L(\lambda+\eps_r)$ in $\C^{m|n}\otimes V(\mu)$ and $w'$ is a highest weight vector of a copy of $L(\lambda+\eps_{a+1})$, cf. \eqref{csum}.   
But now Lemma \ref{clem} and Lemma \ref{boxlem} together imply that the values of the Casimir $\mc C$ on these two irreducibles are distinct, as required. 
\end{proof}
Since the Gaudin Hamiltonians are symmetric with respect to the tensor Shapovalov form, cf. Proposition \ref{Hcomprop}, pairs of eigenvectors with distinct eigenvalues are orthogonal with respect to this form. 



\section{Results from functoriality}\label{func sec}
The results in this section rely on the following well-known lemma from algebraic geometry.
\begin{lem}\label{limlem} Let $n\in \Z_{\geq 1}$ and
suppose $f^{(\vareps)}_k(x_1,\dots,x_n)=0$, $1\leq k\leq n$, is a system of $n$ algebraic equations, depending algebraically on a complex parameter $\vareps$, for $n$ complex variables $x_1,\dots, x_n$. If $(x_1^{(0)},\dots,x_n^{(0)})$ is an isolated solution when $\vareps=0$, then for sufficiently small $\vareps$, there exists an isolated solution $(x_1^{(\vareps)},\dots,x_n^{(\vareps)})$, depending algebraically on $\vareps$, such that
$$x_k^{(\vareps)} = x_k^{(0)} + \mc O(\vareps).$$ 
\qed\end{lem}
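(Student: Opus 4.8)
The plan is to avoid the implicit function theorem — which is unavailable here, since an isolated solution may be multiple and the Jacobian of $F^{(0)} := (f^{(0)}_1,\dots,f^{(0)}_n)$ singular — and instead to study the entire family of solutions as a single algebraic variety, reading off persistence of the solution from a dimension count. Accordingly, I would set
\[
X := \left\{ (\bm x,\vareps)\in \C^n\times \C : f^{(\vareps)}_k(\bm x) = 0,\ 1\leq k\leq n\right\},
\]
an affine algebraic variety, and let $\pi:X\to\C$ be the projection $(\bm x,\vareps)\mapsto\vareps$. The fibre $\pi^{-1}(\vareps)$ is exactly the solution set of the $\vareps$-system, and by hypothesis $p_0 := (\bm x^{(0)},0)$ is an isolated point of $\pi^{-1}(0)$. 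Everything then reduces to understanding the local structure of $X$ at $p_0$.

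First I would bound the dimension of $X$ near $p_0$. Being cut out by $n$ equations in $\C^{n+1}$, every irreducible component of $X$ has codimension at most $n$ by Krull's height theorem, hence dimension $\geq 1$; in particular $X$ has no isolated points, so $p_0$ lies on a component $Z$ with $\dim Z\geq 1$. The restriction $\pi|_Z$ must be non-constant, for otherwise $Z\subseteq\pi^{-1}(0)$ would be a positive-dimensional piece of the special fibre through $p_0$, contradicting the isolation of $\bm x^{(0)}$; thus $\pi|_Z$ is dominant, with image a cofinite subset of $\C$. Since $\vareps$ is then a non-zero regular function on the irreducible variety $Z$, the principal ideal theorem shows every component of $Z\cap\{\vareps=0\}$ has dimension exactly $\dim Z - 1$; as $p_0$ is isolated in this set its component is a single point, forcing $\dim Z = 1$. \emph{This step — upgrading ``$\bm x^{(0)}$ is isolated in the fibre'' to ``$p_0$ lies on a curve dominating the parameter line'' — is the crux and the main obstacle, since it is precisely what prevents the solution from disappearing as $\vareps$ leaves $0$.}

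It then remains to parametrize $Z$ near $p_0$. Passing to the normalization $\nu:\tilde Z\to Z$, a smooth irreducible curve, the map $\tilde\pi := \pi\circ\nu$ is a finite non-constant morphism to $\C$; choosing $\tilde p_0\in\nu^{-1}(p_0)$ and a local analytic coordinate $s$ at $\tilde p_0$, we may write $\vareps = s^{p}$ for some ramification index $p\geq 1$, while each coordinate $x_k\circ\nu = \phi_k(s)$ is holomorphic with $\phi_k(0)=x^{(0)}_k$. Substituting $s=\vareps^{1/p}$ exhibits the branch $\bm x^{(\vareps)} = \bm x(\vareps^{1/p})$ as a convergent Puiseux series, i.e. an algebraic function of $\vareps$, which for all sufficiently small $\vareps$ solves the system and satisfies $\bm x^{(\vareps)} = \bm x^{(0)} + \mc O(\vareps^{1/p})$; this is $\mc O(\vareps)$ precisely in the unramified case $p=1$ (e.g. when the Jacobian is nonsingular), and in all cases $\bm x^{(\vareps)}\to\bm x^{(0)}$. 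Finally, isolation of the perturbed solution follows because only finitely many components of $X$ pass through $p_0$ and each is such a dominating curve, so in a small neighbourhood of $p_0$ the variety $X$ is a finite union of these curves; hence $\pi^{-1}(\vareps)$ meets that neighbourhood in only finitely many points, each isolated in the solution set of the $\vareps$-system.
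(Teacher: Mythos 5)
The paper does not actually prove this lemma: it is stated as a ``well-known lemma from algebraic geometry'' and closed with an immediate \qed, so there is no in-paper argument to compare against. Your proposal supplies a correct, self-contained proof along the standard lines: the Krull height theorem forces every component of the incidence variety $X\subset\C^{n+1}$ through $p_0$ to have dimension at least $1$, isolation of $\bm x^{(0)}$ in the special fibre rules out components contained in $\{\vareps=0\}$ and (via the principal ideal theorem applied to $\vareps$) pins the dimension down to exactly $1$, and the Puiseux parametrization of the resulting curve germs yields the perturbed solutions together with their isolation. The one point of genuine friction is the error term: as you correctly observe, the branch only satisfies $\bm x^{(\vareps)}=\bm x^{(0)}+\mc O(\vareps^{1/p})$ for the ramification index $p\geq 1$, and the literal $\mc O(\vareps)$ of the statement can fail when the solution is multiple (e.g.\ $x^2-\vareps=0$ at $x^{(0)}=0$). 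So your proof establishes a slightly weaker -- but correct -- version of the lemma; this is harmless for the paper, since in the proofs of Theorems \ref{compthm} and \ref{thm ee2} only the convergence $\tilde t\to\bar t$ (so that the perturbed Bethe roots limit onto the iterated two-point solutions and the Bethe vectors have the claimed leading asymptotics) is ever used. Two cosmetic quibbles: $\pi\circ\nu$ need not be a finite morphism onto $\C$ (only quasi-finite), though you use nothing beyond the local picture at $\tilde p_0$; and one should note that the $f^{(\vareps)}_k$ may be taken polynomial in $(\bm x,\vareps)$ after clearing denominators, so that $X$ is indeed affine algebraic.
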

\subsection{Completeness of Bethe Ansatz for $\C(m|n)^{\otimes N}$.}
For this subsection we again specialize as in \S\ref{vecsec} to the case in which $L(\bm \lambda)= \left(\C^{m|n}\right)^{\otimes N}$. We shall show the following.
\begin{thm}\label{compthm}
For generic pairwise distinct complex numbers $\bm z = (z_1,\dots,z_N)$, the Gaudin Hamiltonians $(\HH_1,\dots,\HH_N)$ acting in $\left(\left(\C^{m|n}\right)^{\otimes N}\right)^{\textup{sing}}$ are diagonalizable and have joint simple spectrum. Moreover,  for generic $\bs z$
there exists a set of solutions $\{\bs t_i,\ i\in I\}$
of the Bethe equations \eqref{BAE} such that the corresponding Bethe vectors
$\{w(\bm z,\bm t_i),\ i\in I\}$, form a basis of $\left(\left(\C^{m|n}\right)^{\otimes N}\right)^{\textup{sing}}$. 
\end{thm}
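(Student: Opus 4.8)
The plan is to argue by induction on $N$, using the two-point completeness result of Theorem \ref{compmuthm} together with the deformation Lemma \ref{limlem}. The base case $N=2$ is precisely Theorem \ref{compmuthm} and Proposition \ref{specprop} applied with $V(\mu) = \C^{m|n}$ (a single box). Since the $\HH_i$ commute with one another and with $\gl(m|n)$, and are symmetric with respect to the tensor Shapovalov form (Proposition \ref{Hcomprop}), it suffices to produce, for generic $\bm z$, a full set of Bethe eigenvectors in the singular space whose joint eigenvalues are pairwise distinct: such joint eigenvectors are automatically linearly independent, and once their number equals $\dim \left(\left(\C^{m|n}\right)^{\otimes N}\right)^{\textup{sing}}$ they form a basis and simultaneously exhibit the joint simple spectrum, whence diagonalizability.

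For the inductive step I would group the first $N-1$ tensor factors and examine the degeneration in which $z_1, \ldots, z_{N-1}$ collapse to a common cluster point, writing $z_i = \zeta + \vareps\, y_i$ with $\zeta, y_1, \ldots, y_{N-1}$ generic and fixed and $z_N$ held apart, and letting $\vareps \to 0$. Writing $\left(\C^{m|n}\right)^{\otimes(N-1)} \cong \bigoplus_\mu V(\mu)^{\oplus d_\mu}$, the decomposition $\left(\C^{m|n}\right)^{\otimes N} \cong \bigoplus_\mu \left(V(\mu)\otimes \C^{m|n}\right)^{\oplus d_\mu}$ together with the Pieri rule (\S\ref{pieri}) gives the dimension count
\be \dim\left(\left(\C^{m|n}\right)^{\otimes N}\right)^{\textup{sing}}_\nu = \sum_\mu d_\mu \,\dim\left(V(\mu)\otimes\C^{m|n}\right)^{\textup{sing}}_\nu,\nn\ee
which I would match against the number of Bethe solutions. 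The key structural input is that under this scaling the Bethe roots separate into two groups -- those that cluster as $\zeta + \vareps(\cdots)$ near the collapsing points, and those that stay finite near $z_N$ -- and the Bethe equations \eqref{BAE} factor in the limit into the Bethe equations for the $(N-1)$-point Gaudin model on the cluster (solved, up to equivalence, by the inductive hypothesis and selecting a highest weight vector of some $V(\mu)$) together with the two-point Bethe equations for $V(\mu)\otimes\C^{m|n}$ (whose unique relevant solution is given explicitly in Theorem \ref{compmuthm}). This yields a bijection between the limiting Bethe data and pairs consisting of an $(N-1)$-point solution and an admissible added box, so the solution count matches the dimension above.

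Each such limiting configuration is an isolated solution of the rescaled Bethe equations at $\vareps = 0$, so Lemma \ref{limlem} produces, for small $\vareps \neq 0$, a nearby isolated solution depending algebraically on $\vareps$; these are genuine solutions of \eqref{BAE} for the clustered $\bm z$, and by Theorem \ref{singthm} and Theorem \ref{Hthm} the associated weight functions are singular joint eigenvectors of the $\HH_i$. To see they are distinct and nonzero I would examine the eigenvalues: as $\vareps \to 0$ the operators $\vareps\,\HH_i$ for $i < N$ converge to the Gaudin Hamiltonians of the cluster, whose spectrum is simple by the inductive hypothesis, while the $O(1)$ operator $\HH_N$ restricts on each fused summand $V(\mu)\otimes\C^{m|n}$ to the two-point Gaudin Hamiltonian, whose spectrum is simple by Proposition \ref{specprop} via \eqref{Ceqn}. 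Hence the joint eigenvalues are pairwise distinct in the limit, and therefore distinct for generic $\bm z$, since pairwise distinctness is an open condition -- the complement of a proper algebraic subvariety -- that holds near the degenerate point. Distinctness of eigenvalues forces the eigenvectors to be nonzero and linearly independent, completing the count and the proof.

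The main obstacle will be the analysis of the degeneration: pinning down the correct $\vareps$-scaling ansatz for the Bethe roots, proving that the rescaled equations genuinely factor in the limit and that the resulting solutions are isolated and deform uniquely through Lemma \ref{limlem}, and controlling the nonvanishing of the weight function throughout. The supersymmetric features complicate this step relative to the even case: the equations \eqref{BAE} depend on the chosen root system, odd simple roots have $(\alpha_a,\alpha_a)=0$ so roots of odd colour need not repel, and the weight function is skew-symmetric and hence vanishes when two roots of odd colour coincide. One must therefore restrict attention throughout to solutions with nonzero weight function, counted modulo the symmetry group of \S\ref{secsym}, exactly as in the formulation of the theorem.
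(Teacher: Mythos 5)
Your proposal is correct in outline and uses the same essential ingredients as the paper (Theorem \ref{compmuthm}, Proposition \ref{specprop}, Lemma \ref{limlem}, Theorems \ref{singthm} and \ref{Hthm}, and an asymptotic factorization of the Bethe equations), but it organizes them differently. You induct on $N$, collapsing the first $N-1$ points to a single cluster at one scale $\vareps$ and fusing the result with the last factor via the two-point theorem. The paper instead performs one global degeneration with a \emph{hierarchy} of scales, $z_k = z + \vareps^{N+1-k}\tilde z_k$, so that the tensor factors are adjoined one at a time; this builds ``iterated singular vectors'' $v_{\mu_1,\dots,\mu_N}$ labelled by chains of hook diagrams, and every limiting Bethe system that arises is a \emph{two-point} system, whose solution is explicit and manifestly unique and isolated. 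Your route is conceptually cleaner but buys this at the cost of a stronger inductive hypothesis: to invoke Lemma \ref{limlem} at the $(N-1)$-point level you need the cluster solutions to be \emph{isolated}, which the theorem as stated does not assert — you must carry isolatedness (and, implicitly, the identification of each basis Bethe vector with a specific highest weight vector of a specific irreducible component) through the induction. This is true and follows from the construction, but it has to be said. The paper's nested scaling avoids the issue entirely by only ever appealing to the explicitly solved two-point case. Two smaller points: your remark that ``distinctness of eigenvalues forces the eigenvectors to be nonzero'' is backwards — the weight function could a priori vanish at a Bethe solution, and then it is not an eigenvector at all; nonvanishing must be established separately by showing $w(\bm z,\bm t) = \vareps^K(v + \mc O(\vareps))$ with $v\neq 0$ the limiting product vector, which is the content of the paper's decomposition $w_{\mu_1,\dots,\mu_N} = w + y$ around \eqref{wasym}. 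You do flag this as an obstacle, but it is a necessary step, not an optional refinement. Finally, the eigenvalue-separation argument should be phrased with care: two limiting Bethe vectors attached to the same cluster component but different added boxes are separated by $\HH_N$ via Proposition \ref{specprop}, while those attached to different cluster solutions are separated by the rescaled operators $\vareps\HH_i$, $i<N$, using the inductive simple-spectrum claim; you state this correctly, and it mirrors the paper's argument at the first index $k$ where two chains of partitions diverge.
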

\begin{proof}

For each $N\in \Z_{\geq 1}$, 
\be \left(\C^{m|n}\right)^{\otimes N} \cong \bigoplus_{\mu}  V(\mu)^{\oplus m_{\mu,N}}, \label{decomp}\ee 
where the sum is over hook partitions $\mu$ of $N$, and $m_{\mu,N}\in \Z_{\geq 0}$ is the multiplicity of the irreducible polynomial module $V(\mu)$ in this tensor product. Cf. \S\ref{polynomialmodules}. 
The Pieri rule, \S\ref{pieri}, implies that $m_{\mu, N}$ is the number of distinct ways to construct the hook diagram of $\mu$ by adding one box at a time starting from the empty diagram, such that every intermediate step is a legal hook diagram. 
Indeed
\be \left(\C^{m|n}\right)^{\otimes N} \cong \bigoplus_{\mu_1,\dots,\mu_N} V_{\mu_1,\dots,\mu_N}, \nn\ee
where the sum is over all sequences $(\mu_1,\dots,\mu_N)$ of hook partitions such that the diagram of $\mu_k$ has $k$ boxes and is contained, for each $k<N$, in the diagram of $\mu_{k+1}$, and where $V_{\mu_1,\dots,\mu_{N}}\cong V(\mu_N)$ is the irreducible component associated to this sequence. That is, $V_{\mu_1,\dots,\mu_{N}}$ is defined recursively by the demand that for each $2\leq k\leq N$, $V_{\mu_1,\dots,\mu_k}$ is the unique irreducible component of $V_{\mu_1,\dots,\mu_{k-1}}\otimes \C^{m|n}$ isomorphic to $V(\mu_{k})$. 

Pick pair-wise distinct non-zero complex numbers $\tilde z_2,\dots,\tilde z_N$. Theorem \ref{compmuthm} allows us to define, recursively, a highest weight vector $v_{\mu_1,\dots,\mu_N}\in\left(\C^{m|n}\right)^{\otimes N}$ of each of these irreducible components  $V_{\mu_1,\dots,\mu_N}$. Indeed, suppose we have such a set of highest weight vectors $v_{\mu_1,\dots,\mu_{k-1}}\in\left(\C^{m|n}\right)^{\otimes {k-1}} $ for some $2\leq k\leq N$ (and clearly we do in the base case $k=2$). Then $v_{\mu_1,\dots,\mu_k}$ is defined to be the Bethe vector $w(0,\tilde z_k,\bar t^{(1)},\dots,\bar t^{(a)})\in V_{\mu_1,\dots,\mu_{k-1}}\otimes \C^{m|n}$, for the unique $a$ such that the $U(\gl(m|n))$-module through $w$ is isomorphic to $V(\mu_{k})$.

Call these $v_{\mu_1,\dots,\mu_N}\in\left(\C^{m|n}\right)^{\otimes N}$ the \emph{iterated singular vectors}. Associated to each $v_{\mu_1,\dots,\mu_N}$ is the collection 
\be\label{tsformu}
\bar{\bm t}=(\bar t_2^{(1)},\dots,\bar t_2^{(a_2)};\dots; \bar t_N^{(1)},\dots,\bar t_N^{(a_N)})\ee 
consisting of all the Bethe roots used in its construction. Here $a_2,\dots,a_N\in \Z_{\geq 0}$ are the numbers of lowering operators at each step (so actually $a_2\leq 1$), and $\bar t_k^{(b)}$ is of colour $b$, i.e. is associated to $F_b$. For each $k$, the $\bar t_k^{(b)}$, $1\leq b\leq a_k$, solve the Bethe equations for $V_{\mu_1,\dots,\mu_{k-1}}\otimes \C^{m|n}$.

To prove the theorem, we shall show that to each iterated singular vector $v_{\mu_1,\dots,\mu_N}$ in some region of parameters $\bs z$, 
we can associate a Bethe vector $w_{\mu_1,\dots,\mu_N}\in \left(\C^{m|n}\right)^{\otimes N}$ of the same weight, in such a way that these Bethe vectors form a basis. By Theorem \ref{singthm}, these vectors will be singular and by Theorem \ref{Hthm} they will be eigenvectors of the Gaudin Hamiltonians with explicit eigenvalues. We then show that the sets of eigenvalues are different for different constructed Bethe vectors.

To construct the Bethe vector $w_{\mu_1,\dots,\mu_N}$ associated to $v_{\mu_1,\dots,\mu_N}$, 
we must seek a solution to the Bethe equations for   $\left(\C^{m|n}\right)^{\otimes N}$ with Bethe roots 
\be \bm t=( t_2^{(1)},\dots, t_2^{(a_2)};\dots;  t_N^{(1)},\dots, t_N^{(a_N)})\nn\ee
where $t_k^{(b)}$ is of colour $b$, i.e. is associated to $F_b$. 
For these choices of colours, the Bethe equations are:
\begin{subequations}\label{baes}
\be\label{bae1}
\sum_{i=1}^N \frac 1 {t_j^{(1)}-z_i} - \sum_{\substack{i=2\\ i\neq j\\ a_i\geq 1}}^N \frac{(\alpha_1,\alpha_1)}{t_j^{(1)} - t_i^{(1)}} - \sum_{\substack{i=2\\a_i\geq 2}}^n \frac {(\alpha_1,\alpha_2)}{t_j^{(1)} - t_i^{(2)}} = 0 \ee
for all $j$ such that $a_j\geq 1$; and then for all $2\leq b\leq n+m-1$,
\be  - \sum_{\substack{i=2\\ a_i\geq b-1}}^N \frac{(\alpha_b,\alpha_{b-1})}{t_j^{(b)} - t_i^{(b-1)}}
 - \sum_{\substack{i=2\\ i\neq j\\a_i\geq b}}^N \frac{(\alpha_b,\alpha_b)}{t_j^{(b)} - t_i^{(b)}} 
  - \sum_{\substack{i=2\\ a_i\geq b+1}}^n \frac {(\alpha_b,\alpha_{b+1})}{t_j^{(b)} - t_i^{(b+1)}} = 0\label{bae2} \ee
\end{subequations}
for all $j$ such that $a_j\geq b$ (where the final sum is absent in the case $b=m+n-1$).

Pick $z\in \C$, and $\vareps\in \Cx$. Suppose (for now) that 
\be z_1= z ,\qquad\text{and}\qquad z_k = z+ \vareps^{N+1-k} \tilde z_k,\quad 2\leq k\leq N.\label{zt}\ee
Define $\tilde t_k^{(b)}$ by 
\be t_{k}^{(b)} = z + \vareps^{N+1-k} \tilde t_k^{(b)},\quad 2\leq k\leq N,\,\,1\leq b\leq a_k.\label{tt}\ee
Now we consider the leading asymptotic behavior of each of the Bethe equations in turn, as $\vareps\to 0$. 
We claim that at this leading order, the Bethe equations reduce to the equations obeyed by the variables $\bar{\bm t}$.


Consider for example the leading order of the Bethe equation for $t_k^{(1)}$. Note that 
\be\sum_{i=1}^N \frac 1 {t_k^{(1)}-z_i} = \left(\frac{k-1}{\tilde t_k^{(1)}} + \frac 1{\tilde t_k^{(1)} -\tilde z_k}+ \mc O(\vareps)\right) \vareps^{-N-1+k},\nn\ee  
\be  \sum_{\substack{i=2\\ i\neq k\\ a_i\geq 1}}^N \frac{(\alpha_1,\alpha_1)}{t_k^{(1)} - t_i^{(1)}} = 
\left(\sum_{\substack{i<k\\  a_i\geq 1}}\frac{ (\alpha_1,\alpha_1)}{\tilde t_k^{(1)}} + \mc O(\vareps) \right) \vareps^{-N-1+k},\nn\ee
and similarly
\be \sum_{\substack{i=2\\a_i\geq 2}}^n \frac {(\alpha_1,\alpha_2)}{t_j^{(1)} - t_i^{(2)}} =
\left(\sum_{\substack{i<k\\  a_i\geq 2}}\frac{ (\alpha_1,\alpha_2)}{\tilde t_k^{(1)}} + 
                       \frac{ (\alpha_1,\alpha_2) }{t_k^{(1)} - t_k^{(2)}}+  \mc O(\vareps) \right) \vareps^{-N-1+k}.\nn\ee
Let $\lambda_{k-1}\in \h^*$ be the weight such that $L(\lambda_{k-1}) \cong V(\mu_{k-1})$, cf. \eqref{VLeqn}. Then by definition of the numbers $a_i$, we have $$\lambda_{k-1} = (k-1)\eps_1-\sum_{p=1}^{m+n-1}\sum_{\substack{i<k\\a_i\geq p}}\alpha_p$$ and in particular $(\lambda_{k-1},\alpha_1) = k-1-\sum_{\substack{i<k\\  a_i\geq 1}}(\alpha_1,\alpha_1) - \sum_{\substack{i<k\\a_i\geq 2}} (\alpha_2,\alpha_1)$, so we find that 
\be  \frac{(\lambda_{k-1},\alpha_1)}{\tilde t_k^{(1)}} + \frac 1 {\tilde t_k^{(1)} - \tilde z_k} - \frac{\left(\alpha_1,\alpha_2\right)}{\tilde t_k^{(1)} - \tilde t_k^{(2)}} = \mc O(\vareps)\label{leadb}\ee
(or the same equation without the final term if $a_k=1$). And at leading order this is indeed the Bethe equation for $\bar t_{k}^{(1)}$ from the set of Bethe equations for the tensor product $V(\mu_{k-1})\otimes \C^{m|n}$, with the tensor factors assigned to the points $0$ and $\tilde z_k$ respectively.  The other equations work similarly. 

By Lemma \ref{limlem} it follows that for sufficiently small $\vareps$ there must be a solution to the Bethe equations \eqref{baes} of the form $\tilde t_k^{(a)} = \bar t_k^{(a)} + \mc O(\vareps)$. 

Now we show that the set of Bethe vectors $w_{\mu_1,\dots,\mu_N}$ corresponding to such solutions form a basis  of $\left(\left(\C^{m|n}\right)^{\otimes N}\right)^{\textup{sing}}$ for $\vareps$ sufficiently small. For this it suffices to show that $w_{\mu_1,\dots,\mu_N}$ has leading asymptotic behavior \be w_{\mu_1,\dots,\mu_N}=\vareps^K(v_{\mu_1,\dots,\mu_N} + \mc O(\vareps))\label{wasym},\ee 
as $\vareps\to 0$, for some integer power $K$. So consider the definition \eqref{weight function} of $w_{\mu_1,\dots,\mu_N}$. In the sum over ordered partitions, we can isolate those summands in which every factor in the denominator is of the form
\be t^{(a)}_k - t^{(b)}_k \quad\text{or}\quad t^{(a)}_k - z_{k}, \nn\ee
from all the rest (which have factors of the form $t^{(a)}_k-t^{(b)}_l$ or $t^{(a)}_k-z_l$, $k\neq l$). Let $w_{\mu_1,\dots,\mu_N} = w + y$ be the corresponding decomposition of $w_{\mu_1,\dots,\mu_N}$, with $w$ the sum of these distinguished summands. After substitution using \eqref{tt} and \eqref{zt}, one finds that 
\be w = \left(\prod_{k=2}^N \left(\vareps^{-N-1+k}\right)^{a_k} \right) v_{\mu_1,\dots,\mu_N}  \nn\ee
and that $y$ is subleading compared to $w$, which is what we had to show.

Consider two distinct Bethe vectors $w_{\mu_1,\dots,\mu_N}$ and $w_{\mu_1',\dots,\mu_N'}$ constructed as above. By Theorem \ref{Hthm} both are simultaneous eigenvectors of the quadratic Gaudin Hamiltonians $\HH_1,\dots,\HH_N$.  Let $k$, $2\leq k\leq N$, be the smallest such that $\mu_l'=\mu_l$ for all $1\leq l \leq k-1$. (Certainly $\mu_1'=\mu_1$, since both correspond to the unique diagram with a single box.) Consider the Hamiltonian $\HH_k$. When the $z_i$ are chosen as in \eqref{zt} then one finds
\be \HH_k = \vareps^{-N-1+k} \left(\sum_{j=1}^{k-1} \frac{\PP_{jk}}{\tilde z_k} + \mc O(\vareps)\right).\label{Hasym}\ee
Note that $\Delta^{(k-1)}X= \sum_{j=1}^{k-1} X^{(j)}$ for all $X\in \gl(m|n)$. Thus $$\sum_{j=1}^{k-1} \frac{\PP_{jk}}{\tilde z_k} = \frac{ \sum_{a,b}\left(\sum_{j=1}^{k-1} e_{ab}^{(j)}\right) e_{ba}^{(k)}(-1)^{\pr b}}{\tilde z_k}$$ is the image under the embedding $V(\mu_{k-1}) \otimes \C^{m|n}\hookrightarrow (\C^{m|n})^{\otimes k}$ of the quadratic Gaudin Hamiltonian $\HH$ of the spin chain $V(\mu_{k-1})\otimes \C^{m|n}$ with sites at $\tilde z_{k}$ and $0$. Since $\mu_k\neq \mu'_k$, Proposition \ref{specprop} guarantees that $v_{\mu_1,\dots,\mu_k}$ and $v_{\mu'_1,\dots,\mu'_k}$ are eigenvectors of $\HH$ with distinct eigenvalues. It follows that $v_{\mu_1,\dots, \mu_N}$ and $v_{\mu'_1,\dots, \mu'_N}$ are eigenvectors of $\sum_{j=1}^{k-1} \frac{\PP_{jk}}{\tilde z_k}\in \End\left(\left(\C^{m|n}\right)^{\otimes N}\right)$ with distinct eigenvalues. By \eqref{wasym} and \eqref{Hasym}, we have that the eigenvalues of $\HH_k$ on $w_{\mu_1,\dots,\mu_N}$ and $w_{\mu_1',\dots,\mu'_N}$ are 
distinct. 

The argument above establishes that the set of points $\bm z= (z_1,\dots,z_N)$ for which the  Gaudin Hamiltonians are diagonalizable with joint simple spectrum is non-empty. It is a Zariski-open set. Therefore it has full dimension, i.e. the result is true for generic $\bm z$, as required.
\end{proof}

\subsection{Bethe ansatz for arbitrary polynomial representations}
For this subsection we consider the case in which each site of the spin chain carries an arbitrary polynomial representation of $\gl(m|n)$, i.e.
\be L(\lambda^{(i)}) \cong V(\mu^{(i)}),\quad 1\leq i\leq N \nn\ee
for some hook partitions $\mu^{(1)},\dots,\mu^{(N)}$, cf. \S\ref{polynomialmodules}. 
We shall extend Theorem \ref{Hthm} to this case. Since the direct computation used in the proof of Theorem \ref{Hthm} becomes cumbersome in general, we use analytic arguments.

\begin{thm}\label{thm ee2}
If the Bethe equations \eqref{BAE} hold, then $w(\bm z,\bm t)$ is an eigenvector of $\HH_i$ with eigenvalue
\be \EE_i:= \sum_{\substack{j=1 \\ j\neq i}}^N \frac {\left(\lambda^{(i)},\lambda^{(j)}\right)}{z_i-z_j} + \sum_{j=1}^{l} \frac{\left(\lambda^{(i)},\alpha_{c(j)}\right)}{t_j-z_i}, \quad\text{for each}\quad 1\leq i\leq N.\label{ee2}\ee
\end{thm}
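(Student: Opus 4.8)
The plan is to deduce the general statement from the vector-representation case, Theorem~\ref{Hthm}, by a coalescence (fusion) argument built on the functoriality machinery of Theorem~\ref{compthm}. Set $M=\sum_{i=1}^N|\mu^{(i)}|$ and consider the homogeneous chain $(\C^{m|n})^{\otimes M}$, whose sites I label $(i,k)$, $1\le i\le N$, $1\le k\le|\mu^{(i)}|$, grouping them into $N$ clusters. Using the iterated-singular-vector construction from the proof of Theorem~\ref{compthm}, I realise each $V(\mu^{(i)})$ inside the $i$-th cluster, together with a highest weight vector $v_{\lambda^{(i)}}$, so that $\bigotimes_i V(\mu^{(i)})\hookrightarrow(\C^{m|n})^{\otimes M}$ and the action of $X\in\gl(m|n)$ on the $i$-th tensor factor becomes the coproduct action $\sum_k X^{(i,k)}$ on the $i$-th cluster. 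Writing $\Omega_{ij}:=\sum_{a,b}E^{(i)}_{ab}E^{(j)}_{ba}(-1)^{\pr b}$ for the two-site operator of \eqref{Gaudin} (so that $\Omega_{ij}=\PP_{ij}$ on vector representations), the point of the embedding is that $\sum_{k,l}\Omega_{(i,k),(j,l)}$ restricts to $\Omega_{ij}$.

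I would then place the cluster points near $z_i$ using a hierarchy of scales in a small parameter $\vareps$, exactly in the spirit of \eqref{zt}, so that as $\vareps\to0$ each cluster fuses, generation by generation, onto $V(\mu^{(i)})$. Two observations organise the limit. First, for the summed Hamiltonian $\sum_{k}\HH_{(i,k)}^{\mathrm{big}}$ the intra-cluster terms cancel identically, because $\Omega_{(i,k),(i,l)}$ is symmetric in its two factors (cf.\ Proposition~\ref{Hcomprop}(4)) while $1/(z_{(i,k)}-z_{(i,l)})$ is antisymmetric, and the inter-cluster terms converge to $\sum_{j\neq i}\Omega_{ij}/(z_i-z_j)$, i.e.\ to $\HH_i$ acting on $\bigotimes_iV(\mu^{(i)})$. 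Second, by the leading-order analysis of the Bethe equations used in the proof of Theorem~\ref{compthm}, together with Lemma~\ref{limlem}, a solution $\bm t$ of the fused Bethe equations \eqref{BAE} lifts to a solution of the big Bethe equations in which the cluster-building roots sit at the appropriate cluster scales and the genuine excitations remain at scale one.

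By Theorem~\ref{Hthm} the big weight function is then a joint eigenvector of every $\HH_{(i,k)}^{\mathrm{big}}$, hence of $\sum_k\HH_{(i,k)}^{\mathrm{big}}$, with eigenvalue $\sum_k\EE_{(i,k)}^{\mathrm{big}}$; passing to the limit, the rescaled big weight function converges to $w(\bm z,\bm t)$ and the operators converge to $\HH_i$, so it remains to identify $\lim_{\vareps\to0}\sum_k\EE_{(i,k)}^{\mathrm{big}}$ with \eqref{ee2}. This identification is the main obstacle, and it is genuinely delicate: a careless coalescence collapses every cluster to a single point and selects the \emph{fully symmetric} fusion, returning the raw box counts $|\mu^{(j)}|$ in place of the pairings $(\lambda^{(i)},\lambda^{(j)})$ and $(\lambda^{(i)},\alpha_{c(s)})$. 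The correct hierarchical scaling must instead select the fused weight function attached to $V(\mu^{(i)})$; granting this, the conversion of box counts into weight pairings is forced by the cluster Bethe equations, since the highest weight of $V(\mu^{(i)})$ is $|\mu^{(i)}|\eps_1$ minus the simple roots removed by the internal lowerings, and a telescoping sum of the cluster equations (running from the top colour present down to colour one) both cancels the apparent $\vareps^{-1}$ contributions of the internal roots and replaces $|\mu^{(i)}|$ by $(\lambda^{(i)},\cdot)$. Designing this nested limit carefully is the heart of the argument.

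I would finally note an alternative route that isolates the representation theory: prove $(\HH_i-\EE_i)w=0$ first for tensor products of Verma modules $M(\lambda^{(i)})$ with arbitrary $\lambda^{(i)}\in\h^*$, where every coefficient that arises is rational in the weights, and then specialise and project along $M(\lambda^{(i)})\twoheadrightarrow L(\lambda^{(i)})\cong V(\mu^{(i)})$. Since $\HH_i$ commutes with $\gl(m|n)$ by Proposition~\ref{Hcomprop}(2), and the projection sends the Verma weight function to the polynomial one while leaving $\EE_i$ unchanged, the polynomial case would follow; the difficulty is merely displaced to the Verma case, which is again most naturally settled by the coalescence argument above.
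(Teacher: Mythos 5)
Your proposal follows essentially the same route as the paper's own proof: embed each $V(\mu^{(i)})$ as a cluster of defining representations via the iterated-singular-vector construction, scale the cluster positions hierarchically in $\vareps$ as in \eqref{zt}, lift the fused Bethe roots by Lemma~\ref{limlem}, apply Theorem~\ref{Hthm} to the big chain, and identify the limiting eigenvalue by telescoping the internal (cluster) Bethe equations colour by colour to convert box counts into the weight pairings $(\lambda^{(i)},\lambda^{(j)})$ and $(\lambda^{(i)},\alpha_{c(j)})$. The delicate step you correctly flag — cancelling the apparent $\vareps^{-1}$ contributions of the internal roots via the recursion on the colour index — is exactly the computation the paper carries out.
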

\begin{proof}
Pick any one $i$, $1\leq i\leq N$. Let $k_i$ be the number of boxes of the hook partition $\mu^{(i)}$. Consider the Gaudin spin chain with $k_i$ sites, each of which carries the defining representation $\C^{m|n}$ of $\gl(m|n)$:
\be  \underset{k_i}{\underbrace{ \C^{m|n} \otimes \dots \otimes \C^{m|n}} }.\nn\ee 
Let the positions of these sites be $0,\tilde z_{i,2},\dots, \tilde z_{i,k_i}$. 
Now pick hook diagrams $\mu^{(i)}_1,\mu^{(i)}_2,\dots,\mu^{(i)}_{k_i}$ such that $\mu_{k_i}^{(i)}=\mu^{(i)}$ and such that the diagram of $\mu^{(i)}_{k}$ has $k$ boxes and is contained in the diagram of $\mu^{(i)}_{k+1}$ for each $k$, $1\leq k\leq k_i-1$. 
As in \eqref{tsformu} in the proof of the previous result, Theorem \ref{compthm}, to this data is associated an iterated singular vector $$v_i:= v_{\mu^{(i)}_1,\dots,\mu^{(i)}_{k_i}},$$ which is a highest weight vector of an irreducible component of $\left(\C^{m|n}\right)^{\otimes k_i}$ isomorphic to $V(\mu^{(i)})$, and a corresponding solution  
\be {\bm \bar t}=( \bar t_{i,2}^{(1)},\dots, \bar t_{i,2}^{(a_{i,2})};\dots;  \bar t_{i,k_i}^{(1)},\dots, \bar t_{i,k_i}^{(a_{i,k_i})})\nn\ee
to the sequence to two-point Bethe equations.

Now consider the Gaudin spin chain with $\sum_{i=1}^N k_i$ sites, each of which carries the defining representation $\C^{m|n}$ of $\gl(m|n)$. We think of these tensor factors as being  grouped as follows: 
\be \underset{k_1}{\underbrace{ \C^{m|n} \otimes \dots \otimes \C^{m|n}} }\otimes\underset\dots \dots\otimes
 \underset{k_N}{\underbrace{ \C^{m|n} \otimes \dots \otimes \C^{m|n}} },\label{bigc}\ee
and write the positions of the sites as $z_{i,k}$, $1\leq i\leq N$, $1\leq k\leq k_i$.

With the $v_i$ as above, the vector
$v_1\otimes \dots \otimes v_N\in\left(\C^{m|n}\right)^{\sum_{i=1}^Nk_i}$ is singular and, for each $i$, $U(\gl(m|n))\on . v_i \cong V(\mu^{(i)})$, where
\be V(\mu^{(1)})\otimes \dots\otimes V(\mu^{(N)})\label{subm}\ee
is the spin chain we are interested in. 

For the rest of the proof, we pick and fix any solution $\bm z = (z_1,\dots,z_N)$, $\bm t = (t_1,\dots,t_l)$ to the Bethe equations \eqref{BAE} for this tensor product, \eqref{subm}. Let $w$ be the corresponding Bethe vector in the module \eqref{subm}, which we will identify with its image in \eqref{bigc}. 

We consider the spin chain \eqref{bigc}, with 
\be
z_{i,1} := z_i \qquad z_{i,k} := z_i + \vareps^{k_i+1-k} \tilde z_{i,k},\quad 2\leq k\leq k_i.\nn\ee
Introduce variables 
\begin{subequations}\label{baevars}
\be t'_i, \qquad 1\leq i\leq l,\ee
and $t^{(b)}_{i,k}$ and $\tilde t^{(b)}_{i,k}$, where $1\leq b\leq a_{i,k}$, $1\leq k\leq k_i$, $1\leq i \leq N$, related by
\be t^{(b)}_{i,k}  = z_i + \vareps^{k_i+1-k} \tilde t^{(b)}_{i,k}.\ee
\end{subequations}
Let $t^{(b)}_{i,k}$ have colour $b$, and $t'_i$ have the same colour as $t_i$, i.e. $c(t_i)$. 
Consider the leading asymptotic behaviour as $\varepsilon\to 0$ of the Bethe equations for the chain \eqref{bigc} for the full collection of variables $\{t^{(b)}_{i,k}\}\cup \{t'_i\}$. We claim that at leading order in $\vareps$, these Bethe equations yield the following statements:
\begin{enumerate}
\item the $\tilde t^{(b)}_{i,k}$ obey the same equations as the $\bar t^{(b)}_{i,k}$, and 
\item the $t'_i$ obey the same equations as the $t_i$, i.e. the Bethe equations for the spin chain \eqref{subm}. 
\end{enumerate}
The first of these statements is seen by examining the leading order, $\vareps^{-k_i-1+k}$, of the Bethe equation for $t^{(b)}_{i,k}$ as in the previous proof, cf. \eqref{leadb}. 
For the second, note that the Bethe equation for $t'_i$ is 
\be \sum_{j=1}^N \sum_{k=1}^{k_j} \frac{\left(\eps_1,\alpha_{c(t_i)}\right)}{t'_i - z_{j,k}}
- \sum_{i=j}^N \sum_{k=1}^{k_j} \sum_{b=1}^{a_{j,k}} \frac{\left(\alpha_b,\alpha_{c(t_i)}\right)}{t'_i - t^{(b)}_{j,k}}
- \sum_{\substack{j=1\\j\neq i}}^l\frac{\left(\alpha_{c(t_j)},\alpha_{c(t_i)}\right)}{t'_i - t'_j} = 0.\nn\ee
As $\vareps\to 0$, the leading order is $\vareps^0$, and we find
\be \sum_{j=1}^N \frac{k_j \left(\eps_1,\alpha_{c(t_i)}\right)}{t'_i - z_{j}}
- \sum_{j=1}^N \frac{\sum_{k=1}^{k_j} \left(\sum_{b=1}^{a_{j,k}}\alpha_b,\alpha_{c(t_i)}\right)}{t'_i - z_j} 
- \sum_{\substack{j=1\\j\neq i}}^l\frac{\left(\alpha_{c(t_j)},\alpha_{c(t_i)}\right)}{t'_i - t'_j} = \mc O(\vareps).\nn\ee
Since, by definition of the $a_{j,k}$, $k_j\eps_1- \sum_{k=1}^{k_j} \sum_{b=1}^{a_{j,k}}\alpha_b=\lambda^{(j)}$, where $\lambda^{(j)}$ is the highest weight of $V(\mu^{(j)})$, we indeed have
\be \sum_{j=1}^N \frac{\left(\lambda^{(j)},\alpha_{c(t_i)}\right)}{t'_i - z_{j}}
- \sum_{\substack{j=1\\j\neq i}}^l\frac{\left(\alpha_{c(t_j)},\alpha_{c(t_i)}\right)}{t'_i - t'_j} = \mc O(\vareps),\nn\ee
which is part (2) of the claim.

It follows given Lemma \ref{limlem} that for sufficiently small $\vareps$ there is a solution $\{t^{(b)}_{i,k}\}\cup\{t'_i\}$ to the Bethe equations for the spin chain \eqref{bigc} of the form \eqref{baevars} with
\be \tilde t_{i,k}^{(b)} = \bar t_{i,k}^{(b)} + \mc O(\vareps), \quad 
     t'_i = t_i + \mc O(\vareps).\label{ttlim}\ee
Moreover, let $w'$ be the corresponding Bethe vector in \eqref{bigc}. Then (arguing as for \eqref{wasym} in the preceding proof) also $w' = \vareps^K (w+\mc O(\vareps))$ as $\vareps\to 0$.

Now we turn to relating the Gaudin Hamiltonians of the spin chain \eqref{bigc} (call them $\HH_{i,k}$, $1\leq k\leq k_i$, $1\leq i\leq N$) to those of the spin chain \eqref{subm} in which we are interested (call them, as usual, $\HH_i$, $1\leq i\leq N$).
Note that $\Delta^r X = \sum_{k=1}^r X^{(k)}$ for all $X\in \gl(m|n)$. Hence, under the natural embedding of \eqref{subm} into \eqref{bigc},
\be \HH_i 
= \sum_{\substack{j=1 \\j\neq i}}^N \sum_{k=1}^{k_i} \sum_{p=1}^{k_j} \frac{e^{(i,k)}_{ab} e^{(j,p)}_{ba}(-1)^{\pr b}}{z_i-z_j} 
\label{embedH}\ee
where $e^{(i,k)}_{ab}$ denotes the matrix $e_{ab}$ acting in the tensor factor associated to the point $z_{i,k}$, i.e. in tensor factor $\sum_{j<i} k_j + k$. 
Now observe that, for each $i$,
\be \sum_{k=1}^{k_i} \HH_{i,k} = 
\sum_{k=1}^{k_i} \sum_{\substack{p=1\\p\neq k}}^{k_i} \frac{ e^{(i,k)}_{ab} e^{(i,p)}_{ba} (-1)^{\pr b}}{z_{i,k} - z_{i,p}} + 
\sum_{\substack{j=1\\j\neq i}}^N \sum_{k=1}^{k_i} \sum_{p=1}^{k_j} \frac{ e^{(i,k)}_{ab} e^{(j,p)}_{ba} (-1)^{\pr b}}{z_{i,k} - z_{j,p}}.
\nn \ee
The first of the two sums on the right is zero by symmetry of the permutation operator $e^{(i,k)}_{ab} e^{(i,p)}_{ba} (-1)^{\pr b}$. The second is $\HH_i$ at leading order, cf \eqref{embedH}, and thus we have 
\be \sum_{k=1}^{k_i} \HH_{i,k}  = \HH_i + \mc O(\vareps).\nn\ee
Now, by Theorem \ref{Hthm}, 
\be \sum_{k=1}^{k_i} \HH_{i,k} w' = \EE_i' w' \nn\ee where
\be
\EE_i' := 
\sum_{k=1}^{k_i} \sum_{\substack{j=1 \\ j\neq i}}^N \sum_{p=1}^{k_j} \frac 1{z_{i,k}-z_{j,p}} + 
\sum_{k=1}^{k_i} \sum_{\substack{p=1\\p\neq k}}^{k_i} \frac 1{z_{i,k}-z_{i,p}} + 
\sum_{k=1}^{k_i} \sum_{j=1}^N \sum_{\substack{p=1\\ a_{j,p}\geq 1}}^{k_j} \frac 1{t^{(1)}_{j,p} - z_{i,k}} +
\sum_{k=1}^{k_i} \sum_{\substack{j=1\\c(j)=1}}^{l} \frac 1{t'_j-z_{i,k}}.
\label{Ep}\ee
The second sum here is clearly zero. We split the third sum into two pieces:
\be \sum_{k=1}^{k_i} \sum_{j=1}^N \sum_{\substack{p=2\\ a_{j,p}\geq 1}}^{k_j} \frac 1{t^{(1)}_{j,p} - z_{i,k}} =\sum_{k=1}^{k_i} \sum_{\substack{j=1\\j\neq i}}^N \sum_{\substack{p=2\\ a_{j,p}\geq 1}}^{k_j} \frac 1{t^{(1)}_{j,p} - z_{i,k}} 
 + \sum_{k=1}^{k_i} \sum_{\substack{p=2\\ a_{i,p}\geq 1}}^{k_i} \frac 1{t^{(1)}_{i,p} - z_{i,k}}.\label{ep2}\ee
The first of these manifestly has leading behavior $\mc O(\vareps^0)$ in the limit $\vareps \to 0$, as do all the remaining non-zero terms in \eqref{Ep}. At first sight, the second sum in \eqref{ep2} appears to be divergent as $\vareps\to 0$. However, we can re-write it using the Bethe equation for $t_{i,p}^{(1)}$:
\begin{align*} \sum_{k=1}^{k_i} \sum_{\substack{p=2\\ a_{i,l}\geq 1}}^{k_i} \frac 1{t^{(1)}_{i,p} - z_{i,k}} 
&= - \sum_{\substack{p=2\\ a_{i,p}\geq 1}}^{k_i}\sum_{\substack{j=1\\ j\neq i}}^N 
    \sum_{r=1}^{k_j} \frac 1{t^{(1)}_{i,p} - z_{j,r}}
+ \sum_{\substack{p=2\\ a_{i,p}\geq 1}}^{k_i} 
\sum_{\substack{r=2\\ a_{i,r}\geq 1\\ r\neq p}}^{k_i} 
  \frac {(\alpha_1,\alpha_1)}{t^{(1)}_{i,p} - t^{(1)}_{i,r}} 
+ \sum_{\substack{p=2\\ a_{i,p}\geq 1}}^{k_i} 
  \sum_{\substack{j=1\\ j\neq i}}^N 
\sum_{\substack{r=2\\ a_{j,r}\geq 1}}^{k_j}
   \frac {(\alpha_1,\alpha_1)}{t^{(1)}_{i,p} - t^{(1)}_{j,r}}  \\
& {} +  \sum_{\substack{p=2\\ a_{i,p}\geq 1}}^{k_i} 
  \sum_{\substack{j=1}}^N 
\sum_{\substack{r=2\\ a_{j,r}\geq 2}}^{k_j}
   \frac {(\alpha_1,\alpha_2)}{t^{(1)}_{i,p} - t^{(2)}_{j,r}}  
+ \sum_{\substack{p=2\\ a_{i,p}\geq 1}}^{k_i} 
  \sum_{\substack{j=1\\c(j)=1}}^l 
   \frac {(\alpha_1,\alpha_1)}{t^{(1)}_{i,p} - t'_j}  
+ \sum_{\substack{p=2\\ a_{i,p}\geq 1}}^{k_i} 
  \sum_{\substack{j=1\\c(j)=2}}^l 
   \frac {(\alpha_1,\alpha_2)}{t^{(1)}_{i,p} - t'_j} 
.\end{align*}
This expression has a similar structure to \eqref{Ep}: the second sum on the right is zero and the rest are manifestly $\mc O(\vareps^0)$ except for one, which we must split up:
\be   \sum_{\substack{p=2\\ a_{i,p}\geq 1}}^{k_i} 
  \sum_{\substack{j=1}}^N 
\sum_{\substack{r=2\\ a_{j,r}\geq 2}}^{k_j}
   \frac {(\alpha_1,\alpha_2)}{t^{(1)}_{i,p} - t^{(2)}_{j,r}}  
=   \sum_{\substack{p=2\\ a_{i,p}\geq 1}}^{k_i} 
  \sum_{\substack{j=1\\j\neq i }}^N 
\sum_{\substack{r=2\\ a_{j,r}\geq 2}}^{k_j}
   \frac {(\alpha_1,\alpha_2)}{t^{(1)}_{i,p} - t^{(2)}_{j,r}}  
+   \sum_{\substack{p=2\\ a_{i,p}\geq 1}}^{k_i} 
\sum_{\substack{r=2\\ a_{i,r}\geq 2}}^{k_i}
   \frac {(\alpha_1,\alpha_2)}{t^{(1)}_{i,p} - t^{(2)}_{i,r}}  .
\nn\ee
Once again, the first of these is $\mc O(\vareps^0)$, and the second can be re-written using a Bethe equation, this time that for $t_{i,r}^{(2)}$. After completing this recursion on the colour index $b$ of $t_{i,k}^{(b)}$, one arrives at the following leading behavior: 
\begin{align*} \EE_i' &= \sum_{\substack{j=1\\ j\neq i}}^N \frac 1 {z_i-z_j} 
\left( k_ik_j - k_i \sum_{\substack{p=2\\ a_{j,p}\geq 1}}^{k_j} 1 - k_j \sum_{\substack{p=2\\ a_{i,p}\geq 1}}^{k_i} 1    
 + \sum_{a=1}^{m+n-1} \sum_{b=1}^{m+n-1} \sum_{\substack{p=2\\ a_{i,p}\geq a}}^{k_i} \sum_{\substack{r=2\\ a_{j,r}\geq b}}^{k_j} (\alpha_a,\alpha_b) 
 \right) \\
 &{} +  \sum_{\substack{j=1\\ c(j) = 1}}^l \frac 1{t'_j- z_i} 
\left(k_i  - \sum_{a=1}^{m+n-1} \sum_{\substack{p=2\\ a_{i,p}\geq a}}^{k_i} (\alpha_a,\alpha_1)   \right)
+ \sum_{c=2}^{m+n-1} \sum_{\substack{j=1\\ c(j) = c}}^l \frac 1{t'_j- z_i} 
\left(- \sum_{a=1}^{m+n-1} \sum_{\substack{p=2\\ a_{i,p}\geq a}}^{k_i} (\alpha_a,\alpha_c)   \right) + \mc O(\vareps).
\end{align*}
But now, recalling that
\be\lambda^{(j)} = k_j\eps_1 - \sum_{k=2}^{k_j} \sum_{b=1}^{a_{j,k}} \alpha_b 
= k_j\eps_1 - \sum_{b=1}^{m+n-1} \alpha_b \sum_{\substack{k=2\\a_{j,k}\geq b}}^{k_j}1, \nn \ee
we find
\be \EE_i' =   \sum_{\substack{j=1\\ j\neq i}}^N \frac{\left(\lambda^{(i)},\lambda^{(j)}\right)}{z_i-z_j} 
+ \sum_{\substack{j=1}}^l \frac {\left(\lambda^{(i)},\alpha_{c(j)}\right)}{t'_j- z_i}+\mc O(\vareps).\nn\ee 
Recalling \eqref{ttlim}, we therefore have that $\EE_i'\to\EE_i$ as $\vareps\to 0$ with $\EE_i$ as in \eqref{ee2}, which is the required result.
\end{proof}

\section{Features of the Supersymmetric models: some examples in low-rank cases}\label{other sec}

\subsection{The case of $\gl(1|1)$}
The case of $\gl(1|1)$ is the simplest. It is considerably simpler than the case of $\gl(2)$. We illustrate the results of the paper in this case through a direct computation.

Irreducible representations $L(\lambda)$ of $\gl(1|1)$ are parametrised by their highest weights $\lambda = r \epsilon_1 + s \epsilon_2$, cf. \S\ref{fdmod}. We shall write $L(r,s) := L(r \epsilon_1 + s \epsilon_2)$.
If $r \neq -s$ then $L(r,s)$ is two-dimensional and has a basis in which the action is given by the matrices
\be
E_{11}=\left(\begin{matrix} r & 0\\ 0& r-1\end{matrix}\right),\quad  
E_{22}=\left(\begin{matrix} s & 0\\ 0& s+1\end{matrix}\right), \quad
E_{21}=\left(\begin{matrix} 0 & 0\\ 1& 0\end{matrix}\right),\quad
E_{12}=\left(\begin{matrix} 0 & r+s\\ 0& 0\end{matrix}\right).
\nn\ee
In particular, the module $L(r,s)$ with $r \neq - s$ is polynomial if $r\in\Z_{\geq 1}$, $s\in\Z_{\geq 0}$.
When $r = -s$ we get a one-parameter family of one-dimensional modules $L(r, -r)$, of which only the trivial module $L(0,0)$ is polynomial. 

In what follows, we restrict our attention to two-dimensional polynomial modules, namely $L(r, s)$ with $r\in\Z_{\geq 1}$, $s\in\Z_{\geq 0}$.
The tensor product of two such modules decomposes as
\begin{equation} \label{Lrs prod}
L(r, s) \otimes L(r', s') = L(r + r', s + s') \oplus L(r + r' - 1, s + s' + 1).
\end{equation}

Now consider the module $L = L(r_1,s_1)\otimes L(r_2,s_2)\otimes\ldots\otimes L(r_n,s_n)$, where $r_i\in\Z_{\geq 1}$, $s_i\in\Z_{\geq 0}$. Let $r := r_1+\ldots+r_n$ and
\begin{equation*}
h_i := r_i+s_i.
\end{equation*}
We let $v\in L$ denote the tensor product of singular vectors in $L(r_i,s_i)$. By recursive application of the relation \eqref{Lrs prod} it follows that the space of singular vectors $L^{\textup{sing}}$ has dimension $2^{n-1}$. Moreover, the space of singular vectors that are eigenvectors of $E_{11}$ with eigenvalue $r - l$ has dimension $\binom {n-1} l$.

We choose pairwise distinct complex numbers $z_1,\ldots,z_n$. The Bethe equations \eqref{BAE} decouple and reduce to $l$ copies of the single equation
\begin{equation} \label{BAE gl11}
\alpha(t) := \sum_{j=1}^n\frac{h_j}{t-z_j}=0.
\end{equation}
For generic $z_1,\ldots,z_n$, it clearly has $n-1$ distinct solutions $t_1, \ldots, t_{n-1}$.  We assume that we are in such a situation. For example, this happens when all the $z_i$ are real numbers. Note that the $t_i$ are all different from the $z_j$.

Let $E_{ab}(u)$ be the operators acting on $L$, depending on a parameter $u$, given by
\be
E_{ab}(u):=\sum_{j=1}^n \frac{E_{ab}^{(j)}}{u-z_j}.
\nn\ee
Then $E_{11}(u)+E_{22}(u)$ acts on $L$ as multiplication by $\alpha(u)$.

Given a collection ${\bm u} = (u_i)_{i=1}^l$ of $l$ complex numbers, the weight function \eqref{weight function} can be written as
\be
w(\bm z,\bm u) = E_{21}(u_1)E_{21}(u_{2})\dots E_{21}(u_l)v.
\nn\ee
We have $[E_{12},E_{21}(u)]=E_{11}(u)+E_{22}(u)=\alpha(u) \operatorname{Id}$, so that $w(\bm z,\bm u)$ is singular if all the $u_i$ satisfy the Bethe equation \eqref{BAE gl11}, namely $\alpha(u_i) = 0$.
Note, moreover, that $E_{21}(u)E_{21}(v) = - E_{21}(v)E_{21}(u)$ for any $u, v \in \C$ and hence, in particular, $(E_{21}(u))^2 = 0$.

The values of the weight function at the solutions of the Bethe ansatz equations:
\be  \label{basis} E_{21}(t_{i_1})E_{21}(t_{i_2})\dots E_{21}(t_{i_l})v, \quad 1\leq i_1<i_2<\dots <i_l \leq n-1,\quad 0\leq l\leq n-1, \ee
are called Bethe vectors.
By counting, Bethe vectors form a basis of $L^{\textup{sing}}$, provided they are linearly independent. To see that they are linearly independent, we show that with respect to the tensor Shapovalov form they are orthogonal and have nonzero norms. And indeed, we have 
\be \left[ E_{12}(u),E_{21}(w)\right]= -\frac{\alpha(u) - \alpha(w)}{u-w}\operatorname{Id}\quad\text{and hence}\quad 
 \left[E_{12}(u),E_{21}(u)\right]= -\alpha'(u)\operatorname{Id}\nn\ee 
from which it follows that 
\be \langle E_{21}(t_{i_1}) \dots E_{21}(t_{i_l}) v, E_{21}(t_{j_1})\dots E_{21}(t_{j_\ell}) v\rangle = \prod_{k=1}^l \delta_{i_kj_k}(-\alpha'(t_{i_k}))= \prod_{k=1}^l  \delta_{i_kj_k}\sum_{i=1}^n\frac{h_i}{(t_{i_k}-z_i)^2},\label{ips}\ee
so that \eqref{basis} are orthogonal. And since $\alpha(t)\prod_{i=1}^n(t-z_i)$ is a polynomial with simple zeros at the Bethe roots $t_{i_j}$, its derivative, $\alpha'(t) \prod_{i=1}^n(t-z_i) + \alpha(t)\frac{\del}{\del t} \prod_{i=1}^n(t-z_i)$ is nonzero at these roots. Hence, given that $\alpha(t_{i_j})=0$ and $ \prod_{i=1}^n(t_{i_j}-z_i)\neq 0$, it must be that $\alpha'(t_{i_j})\neq 0$, so that the norms are nonzero.

Introduce the master function, cf. \cite{SV}:
\be \Phi(\bm t) :=  \prod_{1\leq i<j\leq n} (z_i-z_j)^{r_ir_j-s_is_j} \prod_{j=1}^l\prod_{i=1}^n (t_i-z_j)^{-h_j}.\nn \ee
We have
\be
\frac{\del\log\Phi}{\del t_i}=-\alpha(t_i),\qquad 
\frac{\del^2\log\Phi}{\del t_i\del t_j} = -\delta_{ij}\alpha'(t_i).\nn
\ee
Therefore, the solutions of the Bethe ansatz equations are critical points of the master function (with distinct coordinates different from $z_i$). Moreover the Hessian matrix is diagonal and
the square of the norm of a Bethe vector is the Hessian determinant of $\log\Phi$ at the corresponding critical point, cf. \cite{MV2,MV1}.

The space of singular vectors $L^{\textup{sing}}$ may be identified with the Grassmann algebra $\Lambda^{n-1}$ in $n-1$ generators $\psi_1, \dots, \psi_{n-1}$ via the identification
\begin{equation} \label{Bethe vect gl11}
\psi_{i_1} \ldots \psi_{i_l} \equiv E_{21}(t_{i_1})\ldots E_{21}(t_{i_l})v, \qquad 0\leq l\leq n-1.
\end{equation}
Furthermore, $[E_{11}, E_{21}(u)] = - E_{21}(u)$ so that \eqref{Bethe vect gl11} is an eigenvector of $E_{11}$ with eigenvalue $r - l$.

Next we check directly that the weight function \eqref{Bethe vect gl11} is an eigenvector of the Gaudin Hamiltonians \eqref{Gaudin}. Consider the operator 
\begin{equation} \label{H gl11}
H(u) := \frac{1}{2}\sum_{a,b=1}^2 E_{ab}(u)E_{ba}(u)(-1)^{|b|}.
\end{equation}
It provides a generating function of the quadratic Gaudin Hamiltonians \eqref{Gaudin} since
\be
H(u)=\frac{1}{2}\sum_{i=1}^n \frac{r_i(r_i-1) - s_i(s_i+1)}{(u-z_i)^2} \operatorname{Id} +\sum_{i=1}^n\frac{1}{u-z_i}{\mathcal H}_i,
\nn\ee
where $r_i(r_i-1) - s_i(s_i+1)$ is the value of the Casimir $\mathcal{C}$ from \S 2.8 on the representation $L(r_i, s_i)$.
To compute the action of \eqref{H gl11} on the singular vector \eqref{Bethe vect gl11} we use the fact that
\begin{equation*}
[H(u), E_{21}(t)] = - \frac{1}{u-t} \big( \alpha(u) E_{21}(t) - \alpha(t) E_{21}(u) \big).
\end{equation*}
In particular, if $t$ is one of the Bethe roots $t_i$ then the second term on the right hand side vanishes by the Bethe equation $\alpha(t_i) = 0$. Hence
\begin{equation*}
[H(u),E_{21}(t_i)] = - \frac{\alpha(u)}{u - t_i}E_{21}(t_i),
\end{equation*}
from which it follows that $H(u)$ preserves the space $L^{\textup{sing}}$. Under the identification of the latter with $\Lambda^{n-1}$, the action of $H(u)$ takes the form
\be
H(u)=-\alpha(u)\sum_{j=1}^{n-1} \frac{1}{u-t_j}\psi_j\partial_{\psi_j}+\beta(u)\operatorname{Id},
\nn\ee
where
\be
\beta(u) = - \sum_{i=1}^n\frac{h_i}{2(u-z_i)^2} + \sum_{i,j=1}^n\frac{r_i r_j - s_i s_j}{2(u-z_i)(u-z_j)}
\nn\ee
is the eigenvalue of $H(u)$ on $v$.
The Bethe vectors $\psi_{i_1}\ldots\psi_{i_l}$, $0\leq l\leq n-1$, are 
therefore eigenvectors for $H(u)$ and hence also for ${\mathcal H}_i$. Specifically, we have
\begin{equation*}
\mathcal{H}_i \psi_{i_1}\ldots\psi_{i_l}=\left( \sum_{\substack{j=1\\ j \neq i}}^n \frac{r_i r_j - s_i s_j}{z_i - z_j} + \sum_{j=1}^l \frac{h_i}{t_{i_j} - z_i} \right) \psi_{i_1}\ldots\psi_{i_l}
\end{equation*}
as in Theorem \ref{thm ee2}.

\subsection{An example of non-polynomial modules}
Throughout \S3--\S5 of this paper we work with polynomial modules of $\gl(m|n)$, which are direct analogues to the finite dimensional modules of $\gl(m)$. Recall that the category of polynomial modules is semi-simple.

The category of \emph{all} finite-dimensional representations of $\gl(m|n)$, $m,n>0$, is not semi-simple. In particular, tensor products of irreducible finite-dimensional $\gl(m|n)$-modules can fail to be fully reducible.
It is interesting to see how the Bethe ansatz behaves in such cases. In this section we give one simple example of this type. 

Let us specialize to $\gl(2|1)$, and choose to work with the Dynkin diagram \be\nn\begin{tikzpicture}[baseline =-3,scale=.6] 
\draw[thick] (1,0) -- (2,0);
\foreach \x in {1,2}
\filldraw[fill=white] (\x,0) circle (2mm);
\draw[thick] (2,0)++(-.15,-.15) -- ++(.3,.3);\draw[thick] (2,0)++(.15,-.15) -- ++(-.3,.3);
\draw[thick] (1,0)++(-.15,-.15) -- ++(.3,.3);\draw[thick] (1,0)++(.15,-.15) -- ++(-.3,.3);
\end{tikzpicture}\ee so that both Chevalley lowering operators $F_1=E_{21}$ and $F_2=E_{32}$ are odd -- cf. Example \ref{exmpgl21} -- 
and the Cartan generators $(H_1,H_2)$ of the subalgebra $\mf{sl}(2|1)\subset \mf{gl}(2|1)$ are 
\be H_1=E_{11}+E_{22} \quad\text{and}\quad H_2= E_{22}+E_{33}.\nn\ee 
The symmetrized Cartan matrix is $\bmx 0 & 1 \\ 1 & 0 \emx$.
Since the rank of $\mf{sl}(2|1)$ is 2, it is easy to picture the $\mf{sl}(2|1)$-weights of finite dimensional representations. We draw weight space as shown below; here the labels $(\lambda_1,\lambda_2)$ are the eigenvalues of $(H_1,H_2)$.
\be\nn\begin{tikzpicture}[rotate=135] 
\draw[dashed] (0,-3) -- (0,3);
\draw[dashed] (-3,0) -- (3,0);
\draw (-4,-4) -- (3,3);

\foreach \x in {-3,3}{\foreach \y in {-1,0,1}{
\filldraw[fill=black] (\x,\y) circle (.3mm) node[right] {$\scriptstyle{(\x,\y)}$};}}
\foreach \x in {-2,2}{\foreach \y in {-2,-1,0,1,2}{
\filldraw[fill=black] (\x,\y) circle (.3mm) node[right=1mm,fill=white, inner sep =0] {$\scriptstyle{(\x,\y)}$};}}
\foreach \x in {-1,1}{\foreach \y in {-3,-2,-1,0,1,2,3}{
\filldraw[fill=black] (\x,\y) circle (.3mm) node[right=1mm,fill=white, inner sep =0] {$\scriptstyle{(\x,\y)}$};}}
\foreach \x in {0}{\foreach \y in {-3,-2,-1,0,1,2,3}{
\filldraw[fill=black] (\x,\y) circle (.3mm) node[right=1mm,fill=white, inner sep =0] {$\scriptstyle{(\x,\y)}$};}}
\end{tikzpicture}\ee
The singlet has highest weight $(0,0)$.
The remaining finite dimensional representations are precisely those with integral highest weights lying strictly in the upper half-plane, i.e. $\lambda_1>\lambda_2$. Among these, the polynomial representations,  \S\ref{polynomialmodules}, are precisely those whose integral highest weights satisfy $\lambda_1>\lambda_2\geq 0$. 
The (Hopf-algebraic) duals of polynomial modules, which also form a semi-simple category, are those finite-dimensional irreducibles with $0\geq \lambda_1>\lambda_2$. 

\begin{exmp} The figures below illustrate the weights, with multiplicities, for the polynomial representations $V(\ydiagram{1})\cong L(1,0,0)$, $V(\ydiagram{2,0})\cong L(2,0,0)$, $V(\ydiagram{1,1}) = L(1,1,0)$ and $V(\ydiagram{2,1}) \cong L(2,1,0)$ respectively.
\be\nn\begin{tikzpicture}[rotate=135,scale=.5] 
\draw[dashed] (0,-3) -- (0,3);
\draw[dashed] (-3,0) -- (3,0);
\draw[gray] (-2,-2) -- (2,2);
\foreach \x in {-3,3}{\foreach \y in {-1,0,1}{
\filldraw[gray,fill=gray] (\x,\y) circle (.3mm) ;}}
\foreach \x in {-2,2}{\foreach \y in {-2,-1,0,1,2}{
\filldraw[gray,fill=gray] (\x,\y) circle (.3mm);}}
\foreach \x in {-1,1}{\foreach \y in {-3,-2,-1,0,1,2,3}{
\filldraw[gray,fill=gray] (\x,\y) circle (.3mm) ;}}
\foreach \x in {0}{\foreach \y in {-3,-2,-1,0,1,2,3}{
\filldraw[gray,fill=gray] (\x,\y) circle (.3mm) ;}}
\foreach \x/\y in {1/0,1/1,0/1} {\draw[fill=black] (\x,\y) circle (1mm);}
\node[above,right=1mm,fill=white,inner sep=0] at (1,0) {$\scriptstyle{(1,0)}$};
\end{tikzpicture}\qquad
\begin{tikzpicture}[rotate=135,scale=.5] 
\draw[dashed] (0,-3) -- (0,3);
\draw[dashed] (-3,0) -- (3,0);
\draw[gray] (-2,-2) -- (2,2);
\foreach \x in {-3,3}{\foreach \y in {-1,0,1}{
\filldraw[gray,fill=gray] (\x,\y) circle (.3mm) ;}}
\foreach \x in {-2,2}{\foreach \y in {-2,-1,0,1,2}{
\filldraw[gray,fill=gray] (\x,\y) circle (.3mm); }}
\foreach \x in {-1,1}{\foreach \y in {-3,-2,-1,0,1,2,3}{
\filldraw[gray,fill=gray] (\x,\y) circle (.3mm); }}
\foreach \x in {0}{\foreach \y in {-3,-2,-1,0,1,2,3}{
\filldraw[gray,fill=gray] (\x,\y) circle (.3mm); }}
\foreach \x/\y in {2/0,2/1,1/1,1/2,0/2} {\draw[fill=black] (\x,\y) circle (1mm);}
\node[above,right=1mm,fill=white,inner sep=0] at (2,0) {$\scriptstyle{(2,0)}$};
\end{tikzpicture}\qquad
\begin{tikzpicture}[rotate=135,scale=.5] 
\draw[dashed] (0,-3) -- (0,3);
\draw[dashed] (-3,0) -- (3,0);
\draw[gray] (-2,-2) -- (2,2);
\foreach \x in {-3,3}{\foreach \y in {-1,0,1}{
\filldraw[gray,fill=gray] (\x,\y) circle (.3mm) ;}}
\foreach \x in {-2,2}{\foreach \y in {-2,-1,0,1,2}{
\filldraw[gray,fill=gray] (\x,\y) circle (.3mm); }}
\foreach \x in {-1,1}{\foreach \y in {-3,-2,-1,0,1,2,3}{
\filldraw[gray,fill=gray] (\x,\y) circle (.3mm); }}
\foreach \x in {0}{\foreach \y in {-3,-2,-1,0,1,2,3}{
\filldraw[gray,fill=gray] (\x,\y) circle (.3mm); }}
\foreach \x/\y in {2/1,1/1,1/2,2/2} {\draw[fill=black] (\x,\y) circle (1mm);}
\node[above,right=1mm,fill=white,inner sep=0] at (2,1) {$\scriptstyle{(2,1)}$};
\end{tikzpicture}\qquad
\begin{tikzpicture}[rotate=135,scale=.5] 
\draw[dashed] (0,-3) -- (0,3);
\draw[dashed] (-3,0) -- (3,0);
\draw[gray] (-2,-2) -- (2,2);
\foreach \x in {-3,3}{\foreach \y in {-1,0,1}{
\filldraw[gray,fill=gray] (\x,\y) circle (.3mm) ;}}
\foreach \x in {-2,2}{\foreach \y in {-2,-1,0,1,2}{
\filldraw[gray,fill=gray] (\x,\y) circle (.3mm); }}
\foreach \x in {-1,1}{\foreach \y in {-3,-2,-1,0,1,2,3}{
\filldraw[gray,fill=gray] (\x,\y) circle (.3mm); }}
\foreach \x in {0}{\foreach \y in {-3,-2,-1,0,1,2,3}{
\filldraw[gray,fill=gray] (\x,\y) circle (.3mm); }}
\foreach \x/\y in {3/1,3/2,2/1,2/2,1/2,2/3,1/3} {\draw[fill=black] (\x,\y) circle (1mm);}
\draw (2,2) circle (2mm);
\node[above,right=1mm,fill=white,inner sep=0] at (3,1) {$\scriptstyle{(3,1)}$};
\end{tikzpicture}\ee
The duals of these polynomial modules are the (non-polynomial) finite-dimensional representations $L(0,0,-1)$, $L(0,0,-2)$, $L(0,-1,-1)$ and $L(0,-1,-2)$ respectively. Their weights are shown below.
\be\nn\begin{tikzpicture}[rotate=135,scale=.5] 
\draw[dashed] (0,-3) -- (0,3);
\draw[dashed] (-3,0) -- (3,0);
\draw[gray] (-2,-2) -- (2,2);
\foreach \x in {-3,3}{\foreach \y in {-1,0,1}{
\filldraw[gray,fill=gray] (\x,\y) circle (.3mm) ;}}
\foreach \x in {-2,2}{\foreach \y in {-2,-1,0,1,2}{
\filldraw[gray,fill=gray] (\x,\y) circle (.3mm);}}
\foreach \x in {-1,1}{\foreach \y in {-3,-2,-1,0,1,2,3}{
\filldraw[gray,fill=gray] (\x,\y) circle (.3mm) ;}}
\foreach \x in {0}{\foreach \y in {-3,-2,-1,0,1,2,3}{
\filldraw[gray,fill=gray] (\x,\y) circle (.3mm) ;}}
\foreach \x/\y in {1/0,1/1,0/1} {\draw[fill=black] (-\x,-\y) circle (1mm);}
\node[above,right=1mm,fill=white,inner sep=0] at (0,-1) {$\scriptstyle{(0,-1)}$};
\end{tikzpicture}\qquad
\begin{tikzpicture}[rotate=135,scale=.5] 
\draw[dashed] (0,-3) -- (0,3);
\draw[dashed] (-3,0) -- (3,0);
\draw[gray] (-2,-2) -- (2,2);
\foreach \x in {-3,3}{\foreach \y in {-1,0,1}{
\filldraw[gray,fill=gray] (\x,\y) circle (.3mm) ;}}
\foreach \x in {-2,2}{\foreach \y in {-2,-1,0,1,2}{
\filldraw[gray,fill=gray] (\x,\y) circle (.3mm); }}
\foreach \x in {-1,1}{\foreach \y in {-3,-2,-1,0,1,2,3}{
\filldraw[gray,fill=gray] (\x,\y) circle (.3mm); }}
\foreach \x in {0}{\foreach \y in {-3,-2,-1,0,1,2,3}{
\filldraw[gray,fill=gray] (\x,\y) circle (.3mm); }}
\foreach \x/\y in {2/0,2/1,1/1,1/2,0/2} {\draw[fill=black] (-\x,-\y) circle (1mm);}
\node[above,right=1mm,fill=white,inner sep=0] at (0,-2) {$\scriptstyle{(0,-2)}$};
\end{tikzpicture}\qquad
\begin{tikzpicture}[rotate=135,scale=.5] 
\draw[dashed] (0,-3) -- (0,3);
\draw[dashed] (-3,0) -- (3,0);
\draw[gray] (-2,-2) -- (2,2);
\foreach \x in {-3,3}{\foreach \y in {-1,0,1}{
\filldraw[gray,fill=gray] (\x,\y) circle (.3mm) ;}}
\foreach \x in {-2,2}{\foreach \y in {-2,-1,0,1,2}{
\filldraw[gray,fill=gray] (\x,\y) circle (.3mm); }}
\foreach \x in {-1,1}{\foreach \y in {-3,-2,-1,0,1,2,3}{
\filldraw[gray,fill=gray] (\x,\y) circle (.3mm); }}
\foreach \x in {0}{\foreach \y in {-3,-2,-1,0,1,2,3}{
\filldraw[gray,fill=gray] (\x,\y) circle (.3mm); }}
\foreach \x/\y in {2/1,1/1,1/2,2/2} {\draw[fill=black] (-\x,-\y) circle (1mm);}
\node[above,right=1mm,fill=white,inner sep=0] at (-1,-2) {$\scriptstyle{(-1,-2)}$};
\end{tikzpicture}\qquad
\begin{tikzpicture}[rotate=135,scale=.5] 
\draw[dashed] (0,-3) -- (0,3);
\draw[dashed] (-3,0) -- (3,0);
\draw[gray] (-2,-2) -- (2,2);
\foreach \x in {-3,3}{\foreach \y in {-1,0,1}{
\filldraw[gray,fill=gray] (\x,\y) circle (.3mm) ;}}
\foreach \x in {-2,2}{\foreach \y in {-2,-1,0,1,2}{
\filldraw[gray,fill=gray] (\x,\y) circle (.3mm); }}
\foreach \x in {-1,1}{\foreach \y in {-3,-2,-1,0,1,2,3}{
\filldraw[gray,fill=gray] (\x,\y) circle (.3mm); }}
\foreach \x in {0}{\foreach \y in {-3,-2,-1,0,1,2,3}{
\filldraw[gray,fill=gray] (\x,\y) circle (.3mm); }}
\foreach \x/\y in {3/1,3/2,2/1,2/2,1/2,2/3,1/3} {\draw[fill=black] (-\x,-\y) circle (1mm);}
\draw (-2,-2) circle (2mm);
\node[above,right=1mm,fill=white,inner sep=0] at (-1,-3) {$\scriptstyle{(-1,-3)}$};
\end{tikzpicture}\ee
(In both sets of pictures, the two modules on the left are examples of \emph{atypical} representations while the two on the right are \emph{typical}; for the definitions see e.g. \cite{ChengWang}.) 
\end{exmp}

Let us now consider the tensor product
\be L(1,0,0) \otimes L(0,-1,-1),\label{tp}\ee
whose $\sl(2|1)$-weights are shown below.
\be\nn\begin{tikzpicture}[rotate=135,scale=.5] 
\draw[dashed] (0,-3) -- (0,3);
\draw[dashed] (-3,0) -- (3,0);
\draw[gray] (-2,-2) -- (2,2);
\foreach \x in {-3,3}{\foreach \y in {-1,0,1}{
\filldraw[gray,fill=gray] (\x,\y) circle (.3mm) ;}}
\foreach \x in {-2,2}{\foreach \y in {-2,-1,0,1,2}{
\filldraw[gray,fill=gray] (\x,\y) circle (.3mm); }}
\foreach \x in {-1,1}{\foreach \y in {-3,-2,-1,0,1,2,3}{
\filldraw[gray,fill=gray] (\x,\y) circle (.3mm); }}
\foreach \x in {0}{\foreach \y in {-3,-2,-1,0,1,2,3}{
\filldraw[gray,fill=gray] (\x,\y) circle (.3mm); }}
\foreach \x/\y in {2/0,2/1,1/1,1/2,0/2,1/0,0/1,0/0} {\draw[fill=black] (-\x,-\y) circle (1mm);}
\foreach \x/\y in {1/1,1/0,0/1} {\draw (-\x,-\y) circle (2mm);}
\draw (-1,-1) circle (3mm);
\end{tikzpicture}\ee
Pick highest weight vectors  $v\in L(1,0,0)$ and $w\in L(0,-1,-1)$. Then the vector
\be u:= E_{21} (v\otimes w) = E_{21} v \otimes w + v\otimes E_{21} w \nn\ee
is singular, for one has $E_{12} E_{21} (v\otimes w) = (E_{11} + E_{22}) (v\otimes w) = 0$. Moreover $u$ is annihilated by $E_{21}$, since $E_{21}^2=0$. Hence the submodule of \eqref{tp} generated by $u$, call it $U$, is isomorphic to $L(0,0,-1)$. And the submodule through $v\otimes w$, call it $X$, is an indecomposable with two composition factors: the submodule $U\subset X$ and the quotient $X/U \cong L(1,-1,-1)$. (The latter is isomorphic to $L(0,0,-2)$ as an $\sl(2|1)$-module.) 
Next, note that the vector $z:= (E_{21}v\otimes E_{21} w)$ spans the one-dimensional weight space of weight $(-1,1,-1)$, and that $u=E_{12} z$. So the submodule of \eqref{tp} generated by $z$, call it $Z$, is another indecomposable with two composition factors: the submodule $U\subset Z$ and the quotient $Z/U\cong L(-1,1,-1)$, which is a singlet. 
Finally, let $y:= (E_{21} v)\otimes w$. Then $v\otimes w = E_{12} y$ and $z = -E_{21} y$, from which we conclude that \eqref{tp} is  cyclic on $y$, and that the irreducible quotient of \eqref{tp} is another copy of $L(0,0,-1)$.
Therefore \eqref{tp} has composition factors $L(1,-1,-1)$, $L(0,0,-1)$, $L(0,0,-1)$ and $L(-1,1,-1)$  (whose dimensions are $5,3,3,1$) and the  structure of the representation can be summarized by the poset 
\be \begin{tikzpicture}    
\matrix (m) [matrix of math nodes, 
row sep=1em,    
column sep=-2em
]    
{           & L(0,0,-1)      & \\    
  L(1,-1,-1)  &     & L(-1,1,-1)   \\    
            & L(0,0,-1)      & \\   };    
\path[thick,->]    
(m-1-2) edge (m-2-1)     
(m-2-1) edge (m-3-2)    
(m-1-2) edge (m-2-3)
(m-2-3) edge (m-3-2);    
\end{tikzpicture}    
.\nn  \ee
We now consider the Bethe ansatz equations \eqref{BAE} and Bethe vectors.
The trivial solution ($l_1=0$, $l_2=0$) corresponds to the singular vector $v\otimes w$. Consider $l_1=1, l_2=0$, i.e. solutions corresponding to applying $F_1=E_{21}$ once to $v\otimes w$. The single Bethe equation is then 
\be \frac{1}{t-z_1} + \frac{0}{t-z_2} = 0\nn\ee
and has no solution. This is despite the fact that there is a singular vector, namely $u$, with the correct weight. 

Thus, the Bethe vectors do not span the space of all singular vectors of the module \eqref{tp}. This is not unexpected, since the module \eqref{tp} is not fully reducible, so not all singular vectors are highest weight vectors of direct summands; in particular $u$ is a descendant of $v\otimes w$. 

Since the module \eqref{tp} is actually indecomposable, and since the Gaudin Hamiltonian $\HH_1=-\HH_2$ commutes with the $\gl(2|1)$-action (Proposition \ref{Hcomprop} part ii), $\HH_1$ has only one generalized eigenvalue. By considering for example the vector $v\otimes w$ it is easy to see that this eigenvalue is actually zero. That is, $\HH_1$ acts nilpotently on the module \eqref{tp}. However, $\HH_1$ has non-trivial Jordan blocks. Indeed, in the basis $u,y$ of the weight space $(0,0,-1)$, one finds that
\be \HH_1 \bmx u & y \emx = \bmx u & y \emx \bmx 0 & 1 \\ 0 & 0 \emx. \nn\ee 

\subsection{On one-point solutions of the Bethe ansatz equations for $\gl(2|1)$}
We continue to work with $\gl(2|1)$, with the conventions of the preceding subsection.
Let us consider the case of a spin chain with one site, at position $z\in \C$. Recall that in the even case of $\gl(n)$ in the one point model, the solution of the Bethe ansatz form a full flag variety of $\operatorname{GL}_n$, \cite{MVcrit}, \cite{MV one}. Therefore it is interesting to see what happens in the supersymmetric case.

In contrast to for example the case of $\gl(3)$, the diagonal entries of the Cartan matrix are zero in the present case. Thus terms of the form e.g. $2/(t_i-t_j)$ are absent from the Bethe equations and so in principle there can be solutions with coincident Bethe roots of the same colour. But, cf. \S\ref{secsym}, for any such solution, the corresponding weight function is identically zero. 

The weight function in our case simplifies as follows.
\begin{lem}\label{gl21wf}
In the case (of $\gl(2|1)$ with Dynkin diagram \begin{tikzpicture}[baseline =-3,scale=.6] 
\draw[thick] (1,0) -- (2,0);
\foreach \x in {1,2}
\filldraw[fill=white] (\x,0) circle (2mm);
\draw[thick] (2,0)++(-.15,-.15) -- ++(.3,.3);\draw[thick] (2,0)++(.15,-.15) -- ++(-.3,.3);
\draw[thick] (1,0)++(-.15,-.15) -- ++(.3,.3);\draw[thick] (1,0)++(.15,-.15) -- ++(-.3,.3);
\end{tikzpicture}, and a chain with one site, at the point $z\in \C$), the weight function \eqref{weight function} reduces to the following. If $l_1=l_2=:l$ then
\be w(z; t_1,\dots, t_l;s_1,\dots,s_l) =  
\frac{\prod_{1\leq i<j\leq l}(s_i-s_j)(t_i-t_j)}{\prod_{i,j=1}^l(s_i-t_j)}
\left(\frac{(F_1F_2)^l v}{\prod_{i=1}^l(s_i-z)} + \frac{(F_2F_1)^l v}{\prod_{i=1}^l(t_i-z)} 
\right).\ee
If $l_1-1=l_2=:l$ then
\be w(z; t_1,\dots, t_{l+1};s_1,\dots,s_{l}) = 
 \frac{\prod_{1\leq i<j\leq l-1}(s_i-s_j)
       \prod_{1\leq i<j\leq l}(t_i-t_j)}
{\prod_{i=1}^l\prod_{j=1}^{l-1}(t_i-s_j)\prod_{i=1}^l(t_i-z)}
(F_1F_2)^l F_1 v;\ee
if $l_2-1=l_1=:l$ the formula is the same with $s\leftrightarrow t$ and $F_1\leftrightarrow F_2$.
In all other cases, $w=0$.
\end{lem}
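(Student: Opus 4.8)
The plan is to exploit the fact that, for this Dynkin diagram, both lowering operators are odd, so that $F_1^2=F_2^2=0$ (indeed $[F_a,F_a]=2F_a^2=0$ by \eqref{bdef}). First I would observe that in the single-site weight function \eqref{weight function} the vector $F_{\bm n}v=F_{c(n_1)}\cdots F_{c(n_l)}v$ vanishes unless the word $c(n_1),\dots,c(n_l)$ is \emph{alternating} in the two colours; for two letters an alternating word with $l_1$ ones and $l_2$ twos exists precisely when $|l_1-l_2|\le 1$. This immediately gives the last assertion ($w=0$ in all other cases) and reduces the problem to three cases, in which the only surviving operator words are $(F_1F_2)^{l}$ and $(F_2F_1)^{l}$ (when $l_1=l_2=l$), or the single word $(F_1F_2)^{l_2}F_1$, resp.\ $(F_2F_1)^{l_1}F_2$ (when $l_1=l_2+1$, resp.\ $l_2=l_1+1$).

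Next I would simplify the signs. Because every $F_{c(i)}$ is odd, \emph{every} transposition is odd in the sense used to define $(-1)^{|\bm n|}$, so $(-1)^{|\bm n|}=\operatorname{sgn}(\sigma_{\bm n})$ is just the ordinary sign of the underlying permutation $\sigma_{\bm n}$. Grouping the terms of \eqref{weight function} according to their operator word, the coefficient of a given word is a signed sum of the rational functions $\omega_{\bm n}$ over all $\bm n$ producing that word. Each such $\bm n$ is specified by a pair of permutations $(\pi,\rho)$ distributing the colour-$1$ variables $t_1,\dots$ and the colour-$2$ variables $s_1,\dots$ along the alternating chain, and $\operatorname{sgn}(\sigma_{\bm n})=\varepsilon_0\,\operatorname{sgn}(\pi)\operatorname{sgn}(\rho)$ for a fixed base sign $\varepsilon_0$. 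The coefficient of $(F_1F_2)^{l}v$, call it $A$, is thus $\varepsilon_0\sum_{\pi,\rho}\operatorname{sgn}(\pi)\operatorname{sgn}(\rho)\,\omega_{\pi,\rho}$, a ``zigzag'' sum whose factors are of the form $(t_a-s_b)^{\pm1}$ together with a single tail factor $(s_{\rho(l)}-z)^{-1}$.

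The heart of the argument is then a degree-and-divisibility calculation that evaluates $A$ in closed form without summing the zigzag directly. I would take the common denominator $D=\prod_{i,j}(t_i-s_j)\prod_i(s_i-z)$ (each chain factor divides $D$) and set $P:=A\,D$, a polynomial. Since the numerators contribute no $z$ and $z$ enters only through the tail factor, $P$ is independent of $z$; and since every term is jointly homogeneous of degree $-2l$, $P$ is homogeneous of degree $\deg D-2l=l^2-l$. Swapping the values $t_i\leftrightarrow t_j$ amounts to precomposing every $\pi$ with a transposition, which flips $\operatorname{sgn}(\pi)$ and hence the sign of each term, so $A$ — and therefore $P$ — is skew-symmetric in the $t$'s; likewise in the $s$'s. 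Hence $\prod_{i<j}(t_i-t_j)\prod_{i<j}(s_i-s_j)$ divides $P$, and as this product also has degree $l^2-l$ we conclude $P=c\prod_{i<j}(t_i-t_j)(s_i-s_j)$ for a scalar $c$. The same argument applies verbatim to the coefficient $B$ of $(F_2F_1)^{l}v$ (now with tail factors $(t_i-z)$, hence denominator $\prod_{i,j}(t_i-s_j)\prod_i(t_i-z)$) and to the single coefficient in each unbalanced case; in every instance the Vandermonde degree matches the numerator degree exactly, forcing the stated shape.

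It remains only to pin down the constant, and this is the step I expect to be the most delicate. I would fix $c$ by a base case together with a recursion: comparing the $l=1$ computation (where directly $A=\tfrac{1}{(t_1-s_1)(s_1-z)}$, so $P=1$ and $c=1$) with the residue of $A$ at, say, $t_l=s_l$, which collapses the zigzag to the $(l-1)$-variable problem and yields $c_l=c_{l-1}$. The balanced-case identity $c_A=c_B$ then follows from the colour-swap symmetry $t\leftrightarrow s$, $F_1\leftrightarrow F_2$, which is legitimate since the symmetrized Cartan matrix $\left(\begin{smallmatrix}0&1\\1&0\end{smallmatrix}\right)$ is invariant under it. The genuine obstacle throughout is sign bookkeeping: tracking $\varepsilon_0$, the conversion between $\prod_{i,j}(s_i-t_j)$ and $\prod_{i,j}(t_i-s_j)$, and the orientation of the residue recursion, all of which must be reconciled with the $\pm$ convention in the statement; once these are handled the three displayed formulas drop out.
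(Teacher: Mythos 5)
Your proof follows the same route as the paper's: observe that $F_1^2=F_2^2=0$ kills every non-alternating word (which gives the vanishing cases and reduces the balanced and unbalanced cases to two rational-function identities, the paper's \eqref{i1}--\eqref{i2}), and then establish those identities by zero/pole counting --- which is precisely your clear-denominators, skew-symmetry, degree-count and constant-matching argument, the paper dispatching this step with the single phrase ``standard counting of zeroes and poles''. One small caveat: your stated reason that $P$ is independent of $z$ (``the numerators contribute no $z$'') is not valid term by term, since after multiplying by $D$ each summand still carries the factors $\prod_{i\neq \rho(l)}(s_i-z)$; but this is harmless, because your joint-homogeneity of degree $l^2-l$ together with divisibility by the two Vandermondes of the same total degree already forces the quotient to be a genuine constant, in particular independent of $z$.
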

\begin{proof}
Since $F_1^2=F_2^2=0$ only the monomials shown survive in the weight function \eqref{weight function}. 
So the lemma is equivalent to the following identities: 
\be \sum_{\substack{\bm n\in S_l\\ \bm p\in S_l}} 
\frac{(-1)^{|\bm n|}(-1)^{|\bm p|}}{(t_{p(1)} - s_{n(1)})(s_{n(1)}-t_{p(2)}) \dots (t_{p(l)} - s_{n(l)})(s_{n(l)} - z)}  =
 \frac{\prod_{1\leq i<j\leq l}(s_i-s_j)(t_i-t_j)}{\prod_{i,j=1}^l(s_i-t_j)\prod_{i=1}^l(s_i-z)}.
\label{i1}\ee
and
\be   
\sum_{\substack{\bm n\in S_{l-1}\\ \bm p\in S_l}} 
\frac{(-1)^{|\bm n|}(-1)^{|\bm p|}}{(t_{p(1)} - s_{n(1)})(s_{n(1)}-t_{p(2)}) \dots (t_{p(l)} - z)}  =
 \frac{\prod_{1\leq i<j\leq l-1}(s_i-s_j)
       \prod_{1\leq i<j\leq l}(t_i-t_j)}
{\prod_{i=1}^l\prod_{j=1}^{l-1}(t_i-s_j)\prod_{i=1}^l(t_i-z)}.
\label{i2}\ee
These identities are proved by standard counting of zeroes and poles.
\end{proof}
Let us call a solution to the Bethe equations \emph{admissible} if there are no coincident Bethe roots of the same colour. 

Suppose without loss of generality that $z=0$. 
We pick integers $r_1$ and $r_2$ such that $r_1>r_2>0$. To the single site at the origin we assign the polynomial representation $L(r_1-r_2,r_2,0)$. That is, the vacuum state $v$ has $H_1v=r_1v$, $H_2v =r_2v$. Such polynomial representations, with $r_2>0$, are called \emph{typical} (those with $r_2=0$ are \emph{atypical}).   

In this subsection we identify all admissible solutions to the Bethe equations in typical case, for all values of $l_1$ and $l_2$.
The Bethe equations are
\be -\frac{r_1}{t_i} + \sum_{j=1}^{l_2} \frac 1{t_i-s_j} = 0, \quad 1\leq i\leq l_1\qquad\text{and}\qquad
+\frac{r_2}{s_j} + \sum_{i=1}^{l_1}\frac 1{s_j-t_i} = 0, \quad 1\leq j \leq l_2.\label{gl21bae}\ee
(The sign in the second equation is from $(\alpha_2,\eps_2) = (\eps_2,\eps_2) = -1$.)

Let $y_1(u)=\prod_{i=1}^{l_1}(u-t_i)$ and $y_2(u)=\prod_{i=1}^{l_2}(u-s_i)$ be the polynomials in a variable $u$. Then $y_1$ and $y_2$ are relatively prime and do not vanish at $u=0$. 
\begin{lem} Admissible solutions of the equations (\ref{gl21bae}) exist if and only if $l_1=l_2=l$ and
$l=r_1-r_2$ or $l=0$. In the case $l=0$ the solution is trivial. In the case $l=r_1-r_2$ the polynomials which correspond to the admissible solutions are
\be
y_1=u^l-r_2c, \qquad y_2=u^l-r_1c, \label{gl11pop}
\ee
where $c\in\C^\times$ is an arbitrary non-zero constant.
\end{lem}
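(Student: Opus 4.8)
The plan is to encode the Bethe equations \eqref{gl21bae} as divisibility relations between the two polynomials $y_1$ and $y_2$, and then to read off all solutions by comparing degrees and coefficients. First I would rewrite each family of equations in terms of logarithmic derivatives, using $\tfrac{y_2'(t_i)}{y_2(t_i)} = \sum_{j}\tfrac{1}{t_i - s_j}$ and $\tfrac{y_1'(s_j)}{y_1(s_j)} = \sum_i \tfrac{1}{s_j - t_i}$. Clearing denominators, the equation for $t_i$ says that the polynomial $g(u) := u\,y_2'(u) - r_1 y_2(u)$ vanishes at every $t_i$, and the equation for $s_j$ says that $h(u) := u\,y_1'(u) + r_2 y_1(u)$ vanishes at every $s_j$. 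Since the roots of $y_1$ and of $y_2$ are simple (admissibility) and nonzero, and since $y_1, y_2$ are coprime, this is equivalent to the divisibilities $y_1 \mid g$ and $y_2 \mid h$.

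Next I would pin down the degrees. I would first dispose of the degenerate cases: if $l_1 = 0$ the equations for the $s_j$ read $r_2/s_j = 0$, impossible since $r_2 > 0$, and symmetrically $l_2 = 0$ forces $l_1 = 0$; hence either $l_1 = l_2 = 0$ (the trivial solution) or $l_1, l_2 \ge 1$. In the latter case, the coefficient of $u^{l_1}$ in $h$ is $l_1 + r_2 > 0$, so $\deg h = l_1$ and $y_2 \mid h$ gives $l_2 \le l_1$. For the reverse bound, the coefficient of $u^{l_2}$ in $g$ is $l_2 - r_1$; when $l_2 \ne r_1$ this gives $\deg g = l_2$ and hence $l_1 \le l_2$, while the case $l_2 = r_1$ is excluded because it would force $\deg g \le l_2 - 1 < l_2 \le l_1$, contradicting $y_1 \mid g$ (here $g \ne 0$, since $g \equiv 0$ would make $y_2$ a monomial, violating $y_2(0) \ne 0$). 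Thus $l_1 = l_2 =: l$ and $l \ne r_1$, so that $g = (l - r_1) y_1$ and $h = (l + r_2) y_2$ exactly.

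Writing $y_1 = \sum_k a_k u^k$ and $y_2 = \sum_k b_k u^k$ (both monic of degree $l$), these two identities become the coefficientwise relations $(k - r_1) b_k = (l - r_1) a_k$ and $(k + r_2) a_k = (l + r_2) b_k$. Eliminating $b_k$ shows that $a_k \ne 0$ forces $(k - r_1)(k + r_2) = (l - r_1)(l + r_2)$, i.e. $(k - l)(k + l + r_2 - r_1) = 0$, so the only exponents that can occur in $y_1$ (and hence in $y_2$) are $k = l$ and $k = r_1 - r_2 - l$. Admissibility requires $y_1(0) \ne 0$, i.e. $a_0 \ne 0$, which forces $0 \in \{l,\, r_1 - r_2 - l\}$; since $l \ge 1$ this is possible only when $l = r_1 - r_2$. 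In that case only the exponents $l$ and $0$ survive, the relations fix the ratio of the two constant terms, and the single free constant in $\C^\times$ reproduces the one-parameter family in the statement. Finally I would confirm existence by checking that these polynomials are genuinely admissible: each has $l$ distinct nonzero roots, and they are coprime because $r_1 \ne r_2$.

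I expect the main obstacle to be the careful bookkeeping in the degree count, specifically ruling out the degenerate case $l_2 = r_1$ where the leading coefficient of $g$ collapses; this is where one must invoke $g \ne 0$ together with $y_2(0) \ne 0$ rather than a naive degree comparison. The rest is a clean elimination, and one may alternatively phrase the pair of identities as the first-order system $u y_1' = -r_2 y_1 + (l+r_2) y_2$ and $u y_2' = (l - r_1) y_1 + r_1 y_2$, and observe that the eigenvalues $l$ and $r_1 - r_2 - l$ of its coefficient matrix are exactly the admissible monomial degrees, which makes the final step essentially automatic.
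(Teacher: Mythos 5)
Your approach is essentially the paper's: both recast the Bethe equations \eqref{gl21bae} as the divisibility of $u y_2' - r_1 y_2$ by $y_1$ and of $u y_1' + r_2 y_1$ by $y_2$, deduce $l_1 = l_2 = l$ by comparing degrees, promote the divisibilities to the exact identities $u y_2' - r_1 y_2 = (l-r_1)\, y_1$ and $u y_1' + r_2 y_1 = (l+r_2)\, y_2$, and then solve. You differ only in execution: you make the degree bookkeeping explicit (including the edge cases $l_i = 0$ and $l_2 = r_1$, which the paper passes over in silence), and you solve the pair of identities by coefficientwise elimination, obtaining the constraint $(k-l)(k+l+r_2-r_1)=0$ on admissible exponents, whereas the paper subtracts the two identities to get an Euler equation for $y_1 - y_2$ and concludes $y_1 = y_2 + a u^{l-r_1+r_2}$. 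Both routes are fine and of comparable length.

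The one place you should not wave your hands is the final sentence, where you assert that the coefficient relations ``reproduce the one-parameter family in the statement.'' They do not, quite: evaluating your own relation $(k - r_1) b_k = (l - r_1) a_k$ at $k=0$ with $l = r_1 - r_2$ gives $r_1 b_0 = r_2 a_0$, i.e. $y_1(0)/y_2(0) = r_1/r_2$, so the family you actually obtain is $y_1 = u^l - r_1 c$, $y_2 = u^l - r_2 c$ --- the formula \eqref{gl11pop} with $r_1$ and $r_2$ interchanged. A direct check (take $r_1=2$, $r_2=1$, $l=1$: the equations force $t = 2s$, hence $y_1 = u - 2s$, $y_2 = u - s$) confirms your relations, and the paper's own subtraction argument likewise yields $y_1(0)/y_2(0) = r_1/r_2$; so \eqref{gl11pop} as printed appears to have the roles of $y_1$ and $y_2$ swapped. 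Your method is sound, but you should carry out that last computation rather than assert it: it is the only step that actually pins down the answer, and doing it would have exposed the discrepancy with the stated formula.
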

\begin{proof}
The equations (\ref{gl21bae}) are equivalent to:
\begin{eqnarray*}
y_1 \quad {\rm divides} \quad u^{r_1+1}(u^{-r_1}y_2)'=-r_1y_2+ uy_2', \\ 
y_2 \quad {\rm divides} \quad u^{-r_2+1}(u^{r_2}y_1)'=r_2y_1+uy_1',
\end{eqnarray*}
where the prime denotes the derivative with respect to $u$.
It follows that $y_1$ and $y_2$ have the same degree $l_1=l_2=l$ and moreover,
\begin{eqnarray*}
(l-r_1)y_1 =-r_1y_2+ uy_2', \\
(l+r_2)y_2 =r_2y_1+uy_1'.
\end{eqnarray*}
Subtracting we obtain 
\be
(l-r_1+r_2)(y_1-y_2)=u(y_1-y_2)'
\ee
It follows that $y_1=y_2+au^{l-r_1+r_2}$ for some constant $a\in\C$. 
Substitute and solve the equation for $y_2$. Recalling that 
$y_2$ is a monic polynomial which does not vanish at zero, we obtain the lemma.
\end{proof}

For the solution described in the lemma, the corresponding Bethe vector is easily computed from Lemma \ref{gl21wf}. Namely, it is a nonzero multiple of the vector
\be w := \left(r_1(F_1F_2)^l + r_2(F_2F_1)^l\right)v.\ee
Note that the $\sl(2|1)$ weight is given by $H_1w=r_2w$, $H_2w=r_1w$.
By direct calculation one verifies that $w$ is a singular vector of the Verma module generated by $v$. In fact, if $M_{p,q}$ denotes the Verma module generated by a highest weight vector $v_{p,q}$ with $H_1v_{p,q}=pv_{p,q}$ and $H_2v_{p,q} = qv_{p,q}$  then one has $L(r_1-r_2,r_2,0) \cong M_{r_1,r_2} \big/ M_{r_2,r_1}$ whenever, as here, $L(r_1-r_2,r_2,0)$ is a typical polynomial module. 

In this case, the solutions of the Bethe equations are in 1-1 correspondence with the solutions of the corresponding Bethe equations for $\gl(2)$, which form a flag variety in a two dimensional space \cite{MVcrit}.

\end{document}